\documentclass[11pt,oneside]{amsart}

\usepackage[margin=1in]{geometry}
\usepackage{amsmath,amssymb,amsthm}
\usepackage{tikz}
\usepackage{float}
\usetikzlibrary{arrows.meta,calc,positioning}
\usepackage[colorlinks=true,citecolor=blue,linkcolor=blue,urlcolor=blue,pagebackref]{hyperref}

\numberwithin{equation}{section}

\theoremstyle{plain}
\newtheorem{theorem}{Theorem}[section]
\newtheorem{corollary}[theorem]{Corollary}
\newtheorem{proposition}[theorem]{Proposition}
\newtheorem{lemma}[theorem]{Lemma}

\theoremstyle{definition}
\newtheorem{definition}[theorem]{Definition}

\theoremstyle{remark}
\newtheorem{remark}[theorem]{Remark}
\newtheorem{question}[theorem]{Question}

\title{Minkowski sums with convex curves without pointwise Fourier decay}

\author{A. Iosevich}
\address{Department of Mathematics, University of Rochester, Rochester, New York 14627, USA}
\email{alex.iosevich@rochester.edu}

\author{Z. Li}
\address{Department of Mathematics, The Ohio State University, Columbus, Ohio 43210, USA}
\email{li.15128@osu.edu}

\author{E. Palsson}
\address{Department of Mathematics, Virginia Tech, Blacksburg, Virginia 24061, USA}
\email{palsson@vt.edu}

\author{K. Taylor}
\address{Department of Mathematics, The Ohio State University, Columbus, Ohio 43210, USA}
\email{taylor.2952@osu.edu}

\author{A. Yavicoli}
\address{Department of Mathematics, University of British Columbia, Vancouver, British Columbia V6T 1Z2, Canada}
\email{yavicoli@math.ubc.ca}

\subjclass[2020]{Primary 28A78; Secondary 42A38, 52A10}

\keywords{Minkowski sums, convex curves, curve projections, Favard curve length, curvature measures, tube overlaps, Fourier decay, rectifiability}

\hypersetup{
  pdftitle={Minkowski sums with convex curves without pointwise Fourier decay},
  pdfauthor={A. Iosevich, Z. Li, E. Palsson, K. Taylor, and A. Yavicoli},
  pdfsubject={Positive measure of Minkowski sums with convex curves without pointwise Fourier decay},
  pdfkeywords={Minkowski sums, convex curves, curve projections, Favard curve length, curvature measures, tube overlaps, Fourier decay, rectifiability}
}

\begin{document}

\begin{abstract}
Let $\Gamma\subset\mathbb R^2$ be a compact convex graph and define
\[
T(\Gamma)
=
\inf
\left\{
 t:
 \dim_{\mathrm H}(E)>t
 \Longrightarrow
 |E+\Gamma|>0
 \text{ for every compact }E\subset\mathbb R^2
\right\}.
\]
For a graph over an interval of positive length the smallest possible value is $T(\Gamma)=1$.
We ask whether this optimal conclusion can hold when pointwise Fourier decay
of arclength is unavailable. The answer is yes, even for strictly convex
curves. We use the Fourier transform convention
$\widehat\nu(\xi)=\int e^{-2\pi i x\cdot\xi}\,d\nu(x)$. We construct a strictly
convex Lipschitz graph $\Gamma$ with
$T(\Gamma)=1$ such that, for every nontrivial subarc $\Gamma_0$ and every
$\alpha>0$,
\[
\limsup_{|\xi|\to\infty}
|\xi|^\alpha
\left|
\widehat{\mathcal H^1|_{\Gamma_0}}(\xi)
\right|
=
\infty.
\]
We also give a convex example for which arclength on every nontrivial subarc
fails even to be a Rajchman measure. The geometric mechanism behind these
examples is a positive curved trace: if $\Gamma$ contains a positive-length
subset of a $C^2$ curve whose curvature is bounded away from zero, then
$|E+\Gamma|>0$ whenever $\dim_{\mathrm H}(E)>1$. For a nondegenerate graph
this gives $T(\Gamma)=1$. For convex graphs it implies, in particular, that
$T(\Gamma)=1$ whenever the curvature measure has a nonzero absolutely
continuous part. The positive-measure proofs are in physical space and use
translated-tube intersections and elementary facts about convex functions.
The same overlap estimates give Mattila-type lower bounds for the average
lengths of the associated curve projections of neighborhoods under the
positive curved-trace hypothesis. We also prove a dimension-one endpoint
result for sets with a positive-length rectifiable part and formulate the main
remaining question: whether every strictly convex Lipschitz graph has the
optimal threshold $T(\Gamma)=1$.
\end{abstract}

\maketitle

\setcounter{tocdepth}{1}
\tableofcontents
\enlargethispage{4pt}

\section{Introduction}

Let $E\subset\mathbb R^2$ and let $\Gamma\subset\mathbb R^2$ be a compact
curve. We study the arithmetic sum
\begin{equation}\label{eq:intro-sum}
E+\Gamma
=
\{x+y:x\in E,\ y\in\Gamma\}.
\end{equation}
The question throughout the paper is whether geometric information about
$\Gamma$, together with a lower bound for $\dim_{\mathrm H}(E)$, forces the
set in \eqref{eq:intro-sum} to have positive planar Lebesgue measure.

For a compact curve $\Gamma$, define
\begin{equation}\label{eq:intro-threshold}
T(\Gamma)
=
\inf
\left\{
 t:
 \dim_{\mathrm H}(E)>t
 \Longrightarrow
 |E+\Gamma|>0
 \text{ for every compact }E\subset\mathbb R^2
\right\}.
\end{equation}
Throughout the paper, a nondegenerate graph means the graph of a continuous function
over a compact interval of positive length. For such a graph one always has
\begin{equation}\label{eq:intro-lower-threshold}
T(\Gamma)\geq1.
\end{equation}
Indeed, there is a compact set $B\subset\mathbb R$ with
$|B|=0$ and $\dim_{\mathrm H}(B)=1$. If
$E=\{0\}\times B$, then every vertical section of $E+\Gamma$ is a translate
of $B$, and Fubini's theorem gives $|E+\Gamma|=0$. Thus the strongest
possible universal conclusion for a graph is
\[
T(\Gamma)=1.
\]
Theorem \ref{thm:intro-curved-trace} below shows that this lower bound is
attained whenever the graph has a positive curved trace.

We use the following notation throughout. For nonnegative quantities $X$ and
$Y$, the relation $X\lesssim Y$ means that $X\leq CY$ for a constant $C>0$
independent of the variables being estimated. We write $X\gtrsim Y$ if
$Y\lesssim X$, and $X\asymp Y$ if both $X\lesssim Y$ and $Y\lesssim X$.
A subscript on any of these symbols indicates permitted dependence of the
implicit constant. The notation $O(Y)$ has the same upper-bound meaning, and
$o(Y)$ denotes a quantity whose ratio to $Y$ tends to zero in the indicated
limit. For a measurable set $A$, the notation $|A|$ denotes Lebesgue measure
in the ambient dimension, while $|x|$ denotes the Euclidean norm of a vector
$x$. For $A\subset\mathbb R^2$ and $\delta>0$, we write
\[
A^\delta
=
\{x\in\mathbb R^2:\operatorname{dist}(x,A)\leq\delta\}.
\]
For a finite Borel measure $\nu$ on $\mathbb R^d$, our Fourier transform
convention is
\[
\widehat\nu(\xi)
=
\int e^{-2\pi i x\cdot\xi}\,d\nu(x).
\]
For $0<s<2$ and a finite positive Borel measure $\mu$ on $\mathbb R^2$, we
write
\[
I_s(\mu)
=
\iint |x-y|^{-s}\,d\mu(x)\,d\mu(y).
\]
We write $1_A$ for the indicator function of $A$, and
$\operatorname{Var}_I(f)$ and $\operatorname{osc}_I(f)$ for the total
variation and oscillation of $f$ on an interval $I$.

For later comparison with projection theory, let
\[
\Gamma
=
\{(t,\gamma(t)):t\in[a,b]\}
\]
be a compact graph. For a compact set $E\subset\mathbb R^2$ and
$\alpha\in\mathbb R$, define the associated curve projection by
\begin{equation}\label{eq:intro-curve-projection}
\Phi_\alpha^\Gamma(E)
=
\left\{
 x_2+\gamma(\alpha-x_1):
 (x_1,x_2)\in E,\ \alpha-x_1\in[a,b]
\right\}.
\end{equation}
This is precisely the vertical section of $E+\Gamma$ above $\alpha$. These
curve projections were introduced by Simon and Taylor in their study of
planar sumsets \cite{SimonTaylor}. We define the Favard curve length of $E$
associated to $\Gamma$ by
\begin{equation}\label{eq:intro-favard-curve-length}
\operatorname{Fav}_\Gamma(E)
=
\int_{\mathbb R}|\Phi_\alpha^\Gamma(E)|\,d\alpha.
\end{equation}
Fubini's theorem gives the exact identity
\begin{equation}\label{eq:intro-favard-sum-identity}
\operatorname{Fav}_\Gamma(E)
=
|E+\Gamma|.
\end{equation}
Davey and Taylor obtained a quantitative nonlinear Besicovitch theorem for
this family \cite{DaveyTaylor}, while Cladek, Davey, and Taylor developed the
quantitative Favard curve problem for finite approximations of the four-corner
Cantor set \cite{CladekDaveyTaylor}. In the classical orthogonal setting,
Mattila proved lower bounds for the Favard lengths of neighborhoods with the
exponents $\delta^{1-s}$ for $0<s<1$ and
$\left(\log\frac{1}{\delta}\right)^{-1}$ for $s=1$
\cite{MattilaProjections}. Bongers and Taylor obtained the same exponents for
globally transversal nonlinear projection families; for curve projections,
their hypothesis when $d=2$ is that $\Gamma$ be piecewise $C^1$ with piecewise
bi-Lipschitz unit tangent \cite[Theorem~1.7]{BongersTaylor}. Corollary
\ref{cor:intro-quantitative-curve-projections} below gives these lower bounds when
$\Gamma$ merely has a positive curved trace. The trace may be nowhere dense,
and the ambient graph need not satisfy the global transversality hypothesis.

For the unit circle, the threshold-one conclusion is a special case of the
classical circle-packing problem. Let $I\subset(0,\infty)$ be a compact
interval and let $F\subset\mathbb R^2\times I$ be a Borel set of center-radius
pairs. Wolff proved that
\[
\dim_{\mathrm H}(F)>1
\quad\Longrightarrow\quad
\left|
\bigcup_{(a,r)\in F}
\{x\in\mathbb R^2:|x-a|=r\}
\right|>0;
\]
see \cite[Corollary~3]{WolffLocalSmoothing}. Taking
$F=E\times\{1\}$ gives
$|E+\mathbb S^1|>0$ whenever $\dim_{\mathrm H}(E)>1$. Earlier, Marstrand
proved the positive-measure conclusion when the set of centers has positive
planar measure, while Mitsis proved it in the plane when the set of centers has
Hausdorff dimension greater than $\frac32$, as well as for spheres in
dimensions at least three when the set of centers has Hausdorff dimension
greater than $1$; see \cite{MarstrandPacking,Mitsis}. Simon and Taylor gave a
nonlinear-projection treatment for fixed simple piecewise $C^2$ planar curves
with nonvanishing curvature and obtained the sharp dimension-one endpoint
characterization \cite{SimonTaylor}.

A related but distinct circular Kakeya problem asks how small a planar set can
be if it contains a circle of every radius $r\in[1,2]$. Positive Lebesgue
measure is false in this setting: Besicovitch and Rado, and independently
Kinney, constructed measure-zero examples \cite{BesicovitchRado,Kinney}. Wolff
nevertheless proved that every such set has Hausdorff dimension two
\cite{WolffCircularKakeya}. This radius-parametrized problem should be
distinguished from the center-parametrized fixed-translate problem studied
here.

In recent work, Iosevich, Li, and Taylor, three of the present authors, placed
this fixed-translate question in the broader setting of unions of smooth
variable-coefficient hypersurfaces \cite{IosevichLiTaylor}. At a fixed level,
they proved under the Phong--Stein rotational curvature condition that a
parameter set of Hausdorff dimension greater than one generates a union of
positive Lebesgue measure. In the planar translation-invariant case
$\Sigma_x=x+\Gamma$, this gives $T(\Gamma)=1$ when $\Gamma$ is a $C^2$ curve
with nonvanishing curvature. They also proved a geometric criterion based on
pairwise overlaps of tubular neighborhoods which reaches the same threshold
for arbitrary measurable selections. The present paper develops this
geometric mechanism in a substantially less regular setting. The curve
$\Gamma$ need not itself be $C^2$, and arclength on every nontrivial subarc may
fail all polynomial pointwise Fourier decay. What remains is a positive-length
trace on a uniformly curved graph, and we show directly that this weaker
feature supplies enough translated-neighborhood transversality to recover the
optimal threshold.

A standard proof in the smooth translation-invariant setting may be expressed
in terms of pointwise Fourier decay. If $\sigma$ is a nonzero finite positive
measure supported on a subarc of $\Gamma$ and, for some $0<\alpha<1$,
\begin{equation}\label{eq:intro-decay}
|\widehat\sigma(\xi)|
\lesssim
(1+|\xi|)^{-\alpha},
\end{equation}
then the Riesz-energy identity and Plancherel's theorem give
\begin{equation}\label{eq:intro-fourier-criterion}
\dim_{\mathrm H}(E)+2\alpha>2
\quad\Longrightarrow\quad
|E+\Gamma|>0.
\end{equation}
Indeed, put $s=2-2\alpha$ and choose
$s<\beta<\dim_{\mathrm H}(E)$. Frostman's lemma \cite{Mattila} gives a
nonzero $\beta$-Frostman measure $\mu$ supported on $E$. Summing over dyadic
annuli gives $I_s(\mu)<\infty$. Since
$|\widehat\sigma(\xi)|^2\lesssim(1+|\xi|)^{-2\alpha}$, the Riesz-energy
identity \cite{Mattila} shows that
$\widehat\mu\,\widehat\sigma\in L^2(\mathbb R^2)$.
Hence $\mu*\sigma$ has a nonzero $L^2$ density supported on $E+\Gamma$, and
therefore $|E+\Gamma|>0$. The complete argument is recorded in Proposition
\ref{prop:direct-fourier}. Our question is whether this geometric conclusion
can persist when the natural arclength measure has no such decay.

Related recent work concerns averaged, rather than pointwise, Fourier
information for fractal measures on smooth curved graphs. Orponen, Puliatti,
and Py\"or\"al\"a obtain $L^p$ estimates for Frostman measures on the parabola
and corresponding lower bounds for the dimensions of their iterated sumsets,
while Yi establishes convolution-energy estimates for Frostman measures on
$C^2$ graphs with nonvanishing curvature and obtains sharp $L^6$ Fourier
estimates in the range $s\geq\frac23$; see
\cite{OrponenPuliattiPyorala,Yi}. These results are complementary to the
present paper. They concern averaged Fourier behavior and energy or dimension
growth for measures supported on smooth curves, whereas our separation
concerns positive Lebesgue measure of $E+\Gamma$ despite the failure of every
pointwise polynomial decay estimate for the natural arclength measure on every
nontrivial subarc of a low-regularity convex graph.

There are two points in this question which are worth separating. First, the
conclusion $|E+\Gamma|>0$ is a statement about the whole family of translates
of $\Gamma$ indexed by $E$, whereas \eqref{eq:intro-decay} is a pointwise
estimate for one fixed measure on the curve. It is therefore conceivable that
the translate geometry retains enough transversality even when the Fourier
transform of arclength behaves badly along a sequence of directions and
frequencies. Second, we are not asking merely whether one particular global
arclength measure fails to decay. Such a failure could be caused by a small
exceptional portion of the curve and would say little about the local
geometry. The examples below are arranged so that the Fourier obstruction
occurs on every nontrivial subarc. Thus the positive-measure conclusion cannot
be recovered by simply discarding a bad part of the curve and applying the
usual Fourier argument to a remaining arc.

The main example gives a strong separation.

\begin{theorem}[Strict convexity without polynomial decay]
\label{thm:intro-strict-example}
There is a strictly convex Lipschitz function
$\gamma_1:[1,2]\to\mathbb R$ whose graph $\Gamma_1$ satisfies
\[
T(\Gamma_1)=1.
\]
Nevertheless, for every nondegenerate interval $I\subset[1,2]$, every
$\alpha>0$, and
\[
\sigma_I
=
\mathcal H^1|_{\{(x,\gamma_1(x)):x\in I\}},
\]
one has
\begin{equation}\label{eq:intro-no-polynomial-decay}
\limsup_{|\xi|\to\infty}
|\xi|^\alpha|\widehat\sigma_I(\xi)|
=
\infty.
\end{equation}
In particular, no positive-length subarc of $\Gamma_1$ has pointwise Fourier
decay of any positive power.
\end{theorem}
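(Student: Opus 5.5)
We will build $\gamma_1$ as a sum of a uniformly convex smooth part and a singular convex part: $\gamma_1=q+\lambda$. Here $q(x)=x^2/2$, and $\lambda$ is a convex function on $[1,2]$ whose second distributional derivative $\lambda''=\rho$ is a purely atomic positive measure. We take
\[
\rho=\sum_{k}c_k\,\delta_{r_k},
\]
with $\{r_k\}$ a dense sequence in $(1,2)$ and weights $c_k>0$, $\sum c_k<\infty$. Then $\gamma_1'=x+\lambda'$ is strictly increasing and bounded, so $\gamma_1$ is strictly convex and Lipschitz. Its curvature measure contains the absolutely continuous part $dx$ (up to the arclength factor), so the graph contains a positive-length subset of a $C^2$ curve of curvature bounded below. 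One way to see this is to take a compact set $K$ of positive measure avoiding small neighbourhoods of the $r_k$ where $\lambda$ is locally affine... but with $\{r_k\}$ dense that fails. Instead we use the later result, stated in the abstract, that $T(\Gamma)=1$ whenever the curvature measure of a convex graph has a nonzero absolutely continuous part. Granting the curved-trace theorem (Theorem \ref{thm:intro-curved-trace} and its convex corollary), this immediately yields $T(\Gamma_1)\le 1$. The reverse inequality \eqref{eq:intro-lower-threshold} gives equality.

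The substance is the Fourier statement \eqref{eq:intro-no-polynomial-decay}. Fix $I$ and $\alpha$. Since $\{r_k\}\cap I$ is infinite, it suffices to produce, for infinitely many $k$ with $r_k\in\operatorname{int} I$, frequencies $\xi_k$ with $|\xi_k|\to\infty$ and $|\widehat\sigma_I(\xi_k)|\ge |\xi_k|^{-\alpha}$ in a way forcing the limsup to be infinite. At $r_k$ the tangent jumps by $c_k$, so the curve has a corner. Take $\xi$ in a direction normal to one of the two one-sided tangent lines at $r_k$, or better in a direction $\theta$ strictly between the two one-sided normals. Stationary phase in the direction $\theta$ then localizes at the corner, where the phase $x\cdot\xi$ restricted to the curve has a kink-type critical point rather than a nondegenerate one. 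The local contribution is of size $\asymp 1/(|\xi|\,\text{(angle gap)})$, roughly $|\xi|^{-1}$ times a constant depending on $c_k$. That alone only gives decay of order $1$.

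To beat every power, we will make the weights decay super fast, $c_k\le e^{-e^k}$, and choose the frequency scale $|\xi|=R_k$ so that the two one-sided normals at $r_k$ differ by about $c_k$. We take $\xi$ along the left normal, so that the phase is stationary on an arc of the curve to the left of $r_k$. On the $c_k$-scale we arrange the contribution to be $\gtrsim R_k^{-1/2}\cdot$(something) near a degenerate point. The cleaner route is to use flat stretches: choose $\lambda$ so that $\gamma_1$ is nearly affine along a stretch of length $\ell_k$ near $r_k$, with second derivative only $\approx 1$ there coming from $q$. The corner then creates a pair of nearly parallel pieces. The better device is to replace $q$ locally: let $\rho$ be atomic but rescale so that $q$'s curvature over a window of length $\ell_k$ contributes phase variation $R_k\ell_k^2\lesssim 1$. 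The window then integrates coherently to give $|\widehat\sigma_I(\xi_k)|\gtrsim \ell_k\approx R_k^{-1/2}$. This only yields decay $\tfrac12$, since the absolutely continuous part forces the curvature to be at least $1$. So we must instead let the smooth part have curvature going to $0$ on a dense set of tiny windows while keeping a positive-measure set where it is $\ge 1$: $\gamma_1''=w(x)\,dx+\rho$ with $w\ge 1$ on a fat Cantor set $K$ and $w=\epsilon_k$ on gap windows $J_k$ of length $\ell_k$ dense in $[1,2]$.

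In window $J_k$, the direction normal to the curve at the midpoint gives coherent phase over $J_k$ once $R_k\epsilon_k\ell_k^2\le 1$. Separately, a stationary-phase and van der Corput estimate bounds the rest of $I$ by $O(R_k^{-1/2}\epsilon^{-1/2})$ contributions from other nondegenerate points. Strict convexity of $\gamma_1$ ensures only $J_k$ has the critical normal. Choosing $\epsilon_k=\ell_k^{k}$ and $R_k=\epsilon_k^{-1}\ell_k^{-2}$ gives a main term $\asymp\ell_k=R_k^{-1/(k+2)}$, which exceeds $R_k^{-\alpha}$ times any constant for large $k$. The main obstacle is controlling the error: we must show the remainder of $\widehat\sigma_I(\xi_k)$ is $o(\ell_k)$. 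Other points of $I$ where the normal is close to $\xi_k/|\xi_k|$ lie near $J_k$ by monotonicity of $\gamma_1'$, and the window's endpoint boundary terms are of size $R_k^{-1}$ times the derivative of the phase, which is $\gg$... We would first try to smooth the window's edges with a cutoff adapted to the curvature $w$ and integrate by parts using $w\ge$ a lower bound outside $J_k$. Atoms are then unnecessary, so we will drop $\rho$ and keep $w$ alone, which is strictly positive, so strict convexity holds.
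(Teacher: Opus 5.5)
Your final construction does not prove the statement, and the first problem is structural. The conclusion \eqref{eq:intro-no-polynomial-decay} is required for \emph{every} nondegenerate $I\subset[1,2]$, including $I\subset J_k$. On a window you set $\gamma_1''=w=\epsilon_k$, a positive constant. Over such an $I$ the graph is therefore an arc of a parabola with curvature bounded below, and the van der Corput (stationary phase) bound gives $|\widehat\sigma_I(\xi)|\lesssim_k|\xi|^{-1/2}$. For $0<\alpha<\frac12$ the limsup is then $0$, not $\infty$. The same failure occurs on any interval on which $w$ is essentially bounded below. Density of $\bigcup_kJ_k$ is the wrong condition. Every subinterval must itself contain almost-flat pieces at arbitrarily small scales, so the flat pieces have to be nested at all scales. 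The paper's slope is built to do exactly this. On each gap $(a,b)$ of the fat Cantor set $F$ it equals $a+\ell q((x-a)/\ell)$, where $q=\sum_kc_k\omega_k$ with $c_k\asymp2^{-k^2}$ is a strictly increasing pure-jump function. Hence every subinterval of a gap contains level-$n$ pieces $J_n=a+\ell D_n$ for all large $n$. In addition, $p_1(x)=x$ on $F$ together with $\int_0^1q=\frac12$ gives $\gamma_1=x^2/2$ on $F$, which is a positive curved trace. Your appeal to Theorem \ref{thm:intro-ac-curvature} for $T(\Gamma_1)=1$ would be fine for a correct construction.

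The second gap is the error term, which you name as the main obstacle but do not handle, and the tools you suggest cannot handle it. Your curvature is absolutely continuous, so the slope is continuous across $\partial J_k$. Just outside $J_k$ you therefore only have $|\Phi'|\gtrsim\epsilon_k\ell_k$, and the first-derivative bound on $I\setminus J_k$ is $\lesssim(R_k\epsilon_k\ell_k)^{-1}=\ell_k$, the same size as your main term. A van der Corput bound using a lower bound for $w$ off $J_k$ is also unavailable, because windows with smaller $\epsilon_{k'}$ are dense there. The paper's remedy is exactly the atoms you chose to drop. On the level-$n$ piece the slope oscillation is at most $\Delta_n=\ell T_n$. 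The jumps of $q$ put every other slope at distance at least $c\ell c_n$ from $m_n$; see \eqref{eq:q-cylinder-separation} and \eqref{eq:global-slope-separation}. With $R_n=\varepsilon_0/(L_n\Delta_n)$, Lemma \ref{lem:first-derivative} bounds the complement by $CL_nT_n/c_n\le CL_n2^{-2n}$, which is negligible against the main term $cL_n$. Finally $R_n^\alpha L_n\to\infty$ because $T_n\asymp2^{-(n+1)^2}$. If you want to keep absolutely continuous curvature, you need an additional mechanism. One option is buffers of length $\eta$ with $\epsilon_k\ell_k\ll\eta\ll\ell_k$ adjacent to each window, on which $\int w\gtrsim\eta$, built in at every scale and inside every window. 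That is a different construction from the one you wrote.
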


Thus the optimal threshold $T(\Gamma)=1$ is not governed by the best
polynomial decay exponent of arclength on a subarc. The construction is
designed so that two apparently competing features coexist. On a
positive-length nowhere-dense set the curve agrees with a parabola, which
supplies geometric transversality. In every complementary interval, however,
the curve contains pieces that are almost flat at a superexponential
sequence of scales. These pieces force the failure of every polynomial
Fourier bound. Section \ref{sec:strict-example} gives the construction and
the complete oscillatory estimate.

The positive result used in this example is elementary and entirely
geometric. We isolate it in a form that is useful on its own.

\begin{definition}\label{def:intro-curved-trace}
A compact rectifiable curve $\Gamma$ has a positive curved trace if there
are a compact $C^2$ graph
\[
\Sigma=\{(x,g(x)):x\in I\},
\]
a compact set $A\subset\Gamma\cap\Sigma$, and a constant $c_0>0$ such that
\[
\mathcal H^1(A)>0
\qquad\text{and}\qquad
|g''(x)|\geq c_0
\]
for every $x\in I$.
\end{definition}

The set $A$ need not contain an arc. It may be nowhere dense in both
$\Gamma$ and $\Sigma$. Figure \ref{fig:positive-curved-trace} shows the
geometry schematically.

\begin{figure}[H]
\centering
\begin{tikzpicture}[x=1cm,y=1cm,>=Stealth]
  \draw[densely dashed,line width=.65pt]
    plot[samples=120,domain=-3.4:3.4] (\x,{0.105*\x*\x-.85});
  \node[font=\small] at (3.55,.28) {$\Sigma$};

  \draw[line width=.9pt]
    plot[samples=30,domain=-3.4:-2.65] (\x,{0.105*\x*\x-.85});
  \draw[line width=.9pt] (-2.65,{0.105*(-2.65)*(-2.65)-.85})
    -- (-2.05,{0.105*(-2.05)*(-2.05)-.85});
  \draw[line width=.9pt]
    plot[samples=30,domain=-2.05:-1.35] (\x,{0.105*\x*\x-.85});
  \draw[line width=.9pt] (-1.35,{0.105*(-1.35)*(-1.35)-.85})
    -- (-.85,{0.105*(-.85)*(-.85)-.85});
  \draw[line width=.9pt]
    plot[samples=30,domain=-.85:-.20] (\x,{0.105*\x*\x-.85});
  \draw[line width=.9pt] (-.20,{0.105*(-.20)*(-.20)-.85})
    -- (.28,{0.105*(.28)*(.28)-.85});
  \draw[line width=.9pt]
    plot[samples=30,domain=.28:1.00] (\x,{0.105*\x*\x-.85});
  \draw[line width=.9pt] (1.00,{0.105*(1.00)*(1.00)-.85})
    -- (1.48,{0.105*(1.48)*(1.48)-.85});
  \draw[line width=.9pt]
    plot[samples=30,domain=1.48:2.20] (\x,{0.105*\x*\x-.85});
  \draw[line width=.9pt] (2.20,{0.105*(2.20)*(2.20)-.85})
    -- (2.72,{0.105*(2.72)*(2.72)-.85});
  \draw[line width=.9pt]
    plot[samples=30,domain=2.72:3.4] (\x,{0.105*\x*\x-.85});

  \draw[line width=2.5pt]
    plot[samples=25,domain=-3.25:-2.82] (\x,{0.105*\x*\x-.85});
  \draw[line width=2.5pt]
    plot[samples=25,domain=-1.92:-1.50] (\x,{0.105*\x*\x-.85});
  \draw[line width=2.5pt]
    plot[samples=25,domain=-.72:-.34] (\x,{0.105*\x*\x-.85});
  \draw[line width=2.5pt]
    plot[samples=25,domain=.42:.86] (\x,{0.105*\x*\x-.85});
  \draw[line width=2.5pt]
    plot[samples=25,domain=1.62:2.05] (\x,{0.105*\x*\x-.85});
  \draw[line width=2.5pt]
    plot[samples=25,domain=2.86:3.25] (\x,{0.105*\x*\x-.85});

  \node[font=\small] at (-3.05,.72) {$\Gamma$};
  \node[font=\small,align=center] at (0,1.12)
    {positive-length trace $A\subset\Gamma\cap\Sigma$};
  \draw[->,line width=.5pt] (-.75,.92)--(-1.70,-.47);
  \draw[->,line width=.5pt] (.75,.92)--(1.83,-.49);
\end{tikzpicture}
\caption{A schematic positive curved trace. The dashed graph $\Sigma$ has
curvature bounded away from zero. The curve $\Gamma$ agrees with $\Sigma$ on
the thick portions representing a positive-length closed set $A$, but may
differ on every complementary interval. In the applications, $A$ may be
nowhere dense. Only the inclusion $E+A\subset E+\Gamma$ and the translated-
neighborhood geometry of $\Sigma$ are used.}
\label{fig:positive-curved-trace}
\end{figure}

\begin{theorem}[Curved traces]\label{thm:intro-curved-trace}
Let $\Gamma\subset\mathbb R^2$ be a compact rectifiable curve having a
positive curved trace. If $E\subset\mathbb R^2$ is compact and
\[
\dim_{\mathrm H}(E)>1,
\]
then
\[
|E+\Gamma|>0.
\]
\end{theorem}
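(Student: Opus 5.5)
Since $E+A\subset E+\Gamma$, it suffices to show $|E+A|>0$. We work with measures $\mu$ on $E$ and $\sigma=\mathcal H^1|_A$, and use a physical-space $L^2$ argument.

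\textbf{Setup.} Choose $1<s<\dim_{\mathrm H}(E)$. By Frostman's lemma there is a nonzero measure $\mu$ on $E$ with $\mu(B(x,r))\lesssim r^s$. Because $A\subset\Sigma$ and $\Sigma$ is a graph, $\sigma$ is comparable to the pushforward of $1_{A_1}\,dx$ under $x\mapsto(x,g(x))$, where $A_1$ is the projection of $A$, a compact set with $|A_1|>0$. Let $\nu_\delta=(\mu*\sigma)*\phi_\delta$ for a standard mollifier $\phi_\delta$. This function is supported in $(E+A)^\delta$ and has total mass $\mu(E)\sigma(A)>0$. By Cauchy--Schwarz,
\[
|(E+A)^\delta|\ \ge\ \frac{(\mu(E)\sigma(A))^2}{\|\nu_\delta\|_2^2}.
\]
So if we prove $\|\nu_\delta\|_2^2\lesssim 1$ uniformly in $\delta$, then $\mu*\sigma$ has an $L^2$ density supported on the compact set $E+A$. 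That gives $|E+A|>0$.

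\textbf{Main estimate.} Up to constants,
\[
\|\nu_\delta\|_2^2\lesssim \delta^{-2}\iint |(x+A)^\delta\cap(y+A)^\delta|_{\sigma}\,d\mu(x)\,d\mu(y).
\]
Equivalently, we need the tube-overlap bound
\[
\sigma\otimes\sigma\bigl(\{(a,b)\in A^2: |x+a-y-b|\le\delta\}\bigr)\lesssim \frac{\delta^2}{|x-y|},
\]
at least when $|x-y|\gtrsim\delta$. For $|x-y|\lesssim\delta$ we use the trivial bound $\lesssim\delta$.

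Since $A\subset\Sigma$, it is enough to bound the overlap of the translated $\delta$-neighborhoods $(x+\Sigma)^\delta\cap(y+\Sigma)^\delta$, measured with arclength on one of the curves. This is the key geometric step, and we prove it by elementary convexity.

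Write $h=y-x=(h_1,h_2)$. The condition $|a-b-h|\le\delta$ with $a=(t,g(t))$ and $b=(t-h_1+O(\delta),\,g(t-h_1)+O(\delta))$ forces
\[
|g(t)-g(t-h_1)-h_2|\lesssim\delta .
\]
Set $F(t)=g(t)-g(t-h_1)$. Then
\[
F'(t)=g'(t)-g'(t-h_1)=\int_{t-h_1}^{t} g''\,d\tau,
\]
so $|F'(t)|\ge c_0|h_1|$, and $F$ is monotone. Hence $\{t:|F(t)-h_2|\lesssim\delta\}$ is an interval of length $\lesssim\delta/|h_1|$. This gives a $\sigma\otimes\sigma$ overlap measure $\lesssim\delta\cdot\delta/|h_1|$.

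When $|h_1|\ll|h_2|$ this argument is not enough, but then the overlap is empty unless $|h_2|\lesssim\|g'\|_\infty|h_1|+\delta$. The reason is that $|g(t)-g(t-h_1)|\le\|g'\|_\infty|h_1|$. So in the nonempty case $|h_1|\gtrsim|h|$ or $|h|\lesssim\delta$, and we get the bound $\lesssim\delta^2/|h|$ in all cases.

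\textbf{Conclusion.} Inserting the overlap bound,
\[
\|\nu_\delta\|_2^2\lesssim\iint_{|x-y|\ge\delta}|x-y|^{-1}\,d\mu\,d\mu+\delta^{-1}\,\mu\otimes\mu(|x-y|<\delta).
\]
The first term is at most $I_1(\mu)$, which is finite since $s>1$. The second term is $\lesssim\delta^{-1}\delta^s\to0$.

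The main obstacle I expect is the bookkeeping that reduces the planar $L^2$ norm of the mollified convolution to the pairwise overlap of translated tubes. Specifically, one has to verify that the inner integral is controlled by $\delta^{-2}\,\sigma\otimes\sigma\{|x+a-y-b|\le\delta\}$, and to check that the local graph structure holds uniformly, using that $g'$ is bounded on the compact interval $I$. A cleaner alternative is to write
\[
\|\mu*\sigma*\phi_\delta\|_2^2=\iint (\sigma*\tilde\sigma*\phi_\delta*\tilde\phi_\delta)(x-y)\,d\mu\,d\mu
\]
and bound $\sigma*\tilde\sigma*\psi_\delta(h)\lesssim|h|^{-1}$ directly by the same monotonicity argument.
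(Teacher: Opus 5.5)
Your proposal is correct and follows essentially the same route as the paper. Both arguments are a second-moment/Cauchy--Schwarz argument over translates indexed by a Frostman measure with $I_1(\mu)<\infty$, and both are driven by the overlap bound $\lesssim\delta^2/(\delta+|h|)$, which comes from the monotonicity of $g(t)-g(t-h_1)$ (derivative of size $\geq c_0|h_1|$) together with the Lipschitz constraint $|h_2|\lesssim|h_1|+\delta$. The differences are cosmetic: the paper uses normalized tube indicators $1_{x+A^\delta}/|A^\delta|$ with $|A^\delta|\asymp\delta$, bounds $|A^\delta\cap(h+A^\delta)|\leq|\Sigma^\delta\cap(h+\Sigma^\delta)|$ with endpoint disks handling the ends of the graph, and passes to the limit via $\bigcap_j(E+A^{\delta_j})=E+A$, whereas you mollify $\mathcal H^1|_A$ and extract an $L^2$ density for $\mu*\sigma$.
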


\begin{corollary}[Quantitative curve projections]
\label{cor:intro-quantitative-curve-projections}
Let $\Gamma$ be a nondegenerate compact rectifiable graph having a positive
curved trace. Let $E\subset\mathbb R^2$ be compact, and suppose that $E$
supports a Borel probability measure $\mu$ satisfying
\[
\mu(B(x,r))
\leq
C_\mu r^s
\]
for every $x\in\mathbb R^2$ and $0<r\leq1$, where $0<s\leq1$. Then there are
constants $c>0$ and $0<\delta_0<\frac12$, depending only on the curved
trace, $s$, and $C_\mu$, such that
\begin{equation}\label{eq:intro-quantitative-curve-projections}
\operatorname{Fav}_\Gamma(E^\delta)
\geq
c
\begin{cases}
\delta^{1-s},&0<s<1,\\[4pt]
\displaystyle\left(\log\frac{1}{\delta}\right)^{-1},&s=1,
\end{cases}
\end{equation}
for every $0<\delta<\delta_0$.
\end{corollary}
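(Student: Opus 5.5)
We prove the bound through a Cauchy--Schwarz (energy) argument on the vertical sections $\Phi_\alpha^\Gamma(E^\delta)$, using only the trace $A\subset\Gamma\cap\Sigma$. Since $E+A\subset E+\Gamma$, and the $\delta$-neighborhood of $E$ gives $E^\delta+A\subset E^\delta+\Gamma$, it suffices to bound $|E^\delta+A|$ from below. By \eqref{eq:intro-favard-sum-identity} this equals the Favard quantity. Parametrize $A$ by its projection $A_1\subset I$ with $|A_1|>0$, and let $\lambda$ be normalized Lebesgue measure on $A_1$ pushed to $A$. Set $f_\delta=\mu*\phi_\delta*\lambda$, where $\phi_\delta=\delta^{-2}1_{B(0,\delta)}$. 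Then $f_\delta$ is a probability density supported in $E^{\delta}+A$. Cauchy--Schwarz gives
\[
|E^\delta+A|\ \geq\ \frac{1}{\|f_\delta\|_2^2},
\]
so everything reduces to the upper bound
\[
\|f_\delta\|_2^2\lesssim \delta^{s-1}\ (s<1),\qquad \|f_\delta\|_2^2\lesssim \log\tfrac1\delta\ (s=1).
\]

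Expanding the square gives
\[
\|f_\delta\|_2^2\lesssim\delta^{-4}\iint |(x+A)^{\delta}\cap(y+A)^{\delta}|_{\lambda}\,d\mu(x)\,d\mu(y),
\]
or, more precisely, a pairing in which the key quantity is the tube-overlap function
\[
K_\delta(z)=\lambda\otimes\lambda\{(a,b):|z+a-b|\le2\delta\},\qquad z=x-y .
\]
Since $A\subset\Sigma$ and $\lambda\le C\,dx$ on $I$, we may replace $A$ by the full $C^2$ arc $\Sigma$ in the upper bound. This is the crucial point: only the geometry of $\Sigma$ enters, and the nowhere density of $A$ is irrelevant for upper bounds. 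The step we expect to be the main obstacle is the translated-tube estimate
\[
\bigl|\Sigma^\delta\cap(z+\Sigma)^\delta\bigr|\lesssim \frac{\delta^2}{|z|+\delta}\qquad(|z|\le c),
\]
valid for a graph with $|g''|\ge c_0$. We would prove it with the elementary convexity facts. The difference $h_z(t)=g(t)-g(t-z_1)-z_2$ has derivative $g'(t)-g'(t-z_1)$, of size $\ge c_0|z_1|$ by the mean value theorem. Hence $\{|h_z|\lesssim\delta\}$ has length $\lesssim\delta/|z_1|$. When $|z_1|\ll|z_2|$, the sets are disjoint unless $|z_2|\lesssim\delta+C|z_1|$. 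Combining the two cases gives the bound with $|z|$ in place of $|z_1|$ after restricting to small $|z|$. Large $|z|$ is handled by compactness, giving overlap $\lesssim\delta^2$, possibly after splitting $\Sigma$ into finitely many short pieces so that curvature produces uniform transversality.

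With this estimate, $\|f_\delta\|_2^2\lesssim\iint(|x-y|+\delta)^{-1}\,d\mu\,d\mu$. A dyadic decomposition using $\mu(B(x,r))\le C_\mu r^s$ bounds the inner integral by
\[
\sum_{\delta\le2^{-k}\le1}2^{k}2^{-ks}+\delta^{-1}\delta^{s}\ \lesssim\ \delta^{s-1}\ (s<1),\qquad \log\tfrac1\delta\ (s=1).
\]
Together with the Cauchy--Schwarz reduction, this gives \eqref{eq:intro-quantitative-curve-projections}. The constants depend on $c_0$, $\|g\|_{C^2}$, $|A_1|$, $s$ and $C_\mu$, and $\delta_0$ is chosen so that the logarithm exceeds the constant terms. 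Nondegeneracy of $\Gamma$ ensures that the vertical-section description of $\operatorname{Fav}_\Gamma$ applies.
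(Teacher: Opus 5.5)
Your proposal is correct and follows essentially the paper's route: a Cauchy--Schwarz (second-moment) bound for a density supported on $(E+A)^\delta=E^\delta+A\subset E^\delta+\Gamma$, the translated-tube estimate $\delta^2/(\delta+|h|)$ from the sublevel sets of $g(t)-g(t-u)$ (this holds for every $h$, so no splitting of $\Sigma$ is needed), reduction to $J_\delta(\mu)=\iint(\delta+|x-y|)^{-1}\,d\mu\,d\mu$, and the dyadic summation. The only difference is cosmetic: you mollify the normalized parameter measure $\lambda$ on the trace instead of dividing $1_{A^\delta}$ by $|A^\delta|$, so the density bound $\lambda\le|A_1|^{-1}\,dx$ (which gives $\phi_\delta*\lambda\lesssim\delta^{-1}1_{A^\delta}$) plays the role of the paper's lower tube-volume bound; note also that the prefactor in front of $\iint K_\delta(x-y)\,d\mu\,d\mu$ should be $\delta^{-2}$ rather than $\delta^{-4}$.
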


By \eqref{eq:intro-favard-curve-length}, this is a lower bound for the average
length of the curve projections $\Phi_\alpha^\Gamma(E^\delta)$. The conclusion
is therefore averaged in $\alpha$; it is distinct from the two-projection
statement used later at the rectifiable endpoint. The point here is that the
standard Mattila exponents persist even though the full projection family need
not be globally transversal. In particular, the corollary applies to the
strictly convex graph in Theorem \ref{thm:intro-strict-example}: it has the
usual quantitative neighborhood projection bounds even though arclength on
every nontrivial subarc fails polynomial decay of every positive power.

Combining Theorem \ref{thm:intro-curved-trace} with the lower bound
\eqref{eq:intro-lower-threshold}, we obtain $T(\Gamma)=1$ whenever $\Gamma$
is a nondegenerate graph with a positive curved trace.

Here is the idea of the proof; the complete argument is given in Section
\ref{sec:overlap}. A uniformly curved graph satisfies
\begin{equation}\label{eq:intro-overlap}
|\Sigma^\delta\cap(h+\Sigma^\delta)|
\lesssim
\frac{\delta^2}{\delta+|h|}.
\end{equation}
Since $A\subset\Sigma$, the same upper bound holds with $A$ in place of
$\Sigma$, while the positive length of $A$ gives $|A^\delta|\asymp\delta$.
A second-moment argument applied to the family of translates
$\{x+A^\delta:x\in E\}$ then turns the overlap estimate into a lower bound for
$|E+A^\delta|$ that is uniform in $\delta$. Passing to the limit gives
$|E+A|>0$, and hence $|E+\Gamma|>0$. No Fourier transform of $A$ or $\Gamma$
enters the proof.

For a convex graph, this has an intrinsic consequence. Let
$\gamma:[a,b]\to\mathbb R$ be convex. Its distributional second derivative
is a finite positive measure. By the Lebesgue decomposition theorem, we write
\begin{equation}\label{eq:intro-curvature-decomposition}
D^2\gamma
=
k(x)\,dx+\kappa_s,
\end{equation}
where $k(x)\,dx$ is the absolutely continuous part and $\kappa_s$ is singular
with respect to Lebesgue measure.

\begin{theorem}[Absolutely continuous curvature]\label{thm:intro-ac-curvature}
Let $\Gamma$ be the graph of a convex Lipschitz function $\gamma$ on a
compact interval. If
\begin{equation}\label{eq:intro-positive-ac-curvature}
\left|\{x:k(x)>0\}\right|>0,
\end{equation}
where $k$ is the density in
\eqref{eq:intro-curvature-decomposition}, then
\[
T(\Gamma)=1.
\]
\end{theorem}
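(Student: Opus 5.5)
The plan is to reduce to Theorem \ref{thm:intro-curved-trace}. We exhibit a positive curved trace inside $\Gamma$ and then combine with the lower bound \eqref{eq:intro-lower-threshold}. Since $\Gamma$ is a nondegenerate graph, $T(\Gamma)\ge 1$ holds automatically. So it suffices to find a compact $C^2$ graph $\Sigma=\{(x,g(x)):x\in I\}$ with $g''\ge c_0>0$ on $I$, and a compact set $A\subset\Gamma\cap\Sigma$ with $\mathcal H^1(A)>0$.

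\textbf{Step 1: second-order information for $\gamma$.} Since $\gamma$ is convex and Lipschitz, $\gamma'$ exists a.e. and is nondecreasing, with distributional derivative $D^2\gamma=k\,dx+\kappa_s$. By Lebesgue differentiation, $k(x)=\lim_{r\to0}D^2\gamma([x-r,x+r])/2r$ for a.e. $x$. Alexandrov's theorem, which for convex functions on $\mathbb R$ is elementary, gives that at a.e. $x$ the function $\gamma$ has a second-order Taylor expansion
\[
\gamma(y)=\gamma(x)+\gamma'(x)(y-x)+\tfrac12 k(x)(y-x)^2+o(|y-x|^2).
\]
This holds because the singular part $\kappa_s$ has zero density at Lebesgue-a.e. point. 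Concretely, $\gamma'$ is differentiable a.e. with derivative $k$, and integrating $\gamma'$ gives the expansion.

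\textbf{Step 2: a Whitney-type $C^2$ extension.} Choose $\epsilon>0$ so that $F=\{k\ge\epsilon\}$ has positive measure. By Lusin and Egorov, pass to a compact $K\subset F$ of positive measure on which $\gamma'$ and $k$ are continuous and the $o(|y-x|^2)$ remainder is uniform. Restricting further to a short subinterval, arrange that $k$ oscillates by less than $\epsilon/4$ on $K$. The Whitney extension theorem for $C^2$ jets then produces a $C^2$ function $g$ on an interval $I\supset K$ with $g=\gamma$, $g'=\gamma'$, $g''=k$ on $K$. This requires checking Whitney's compatibility conditions. The second-order condition is the uniform Taylor expansion just arranged. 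The first-order condition $\gamma'(y)-\gamma'(x)-k(x)(y-x)=o(|y-x|)$ uniformly on $K$ should follow from convexity: monotone difference quotients let us sandwich $\gamma'$ between difference quotients of $\gamma$, to which the uniform second-order expansion applies.

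\textbf{Step 3: uniform curvature and conclusion.} The extension $g$ satisfies $g''\ge 3\epsilon/4$ on $K$, but we need $g''\ge c_0$ on all of $I$. Since $g''$ is continuous, $g''>\epsilon/2$ on an open neighborhood $U$ of $K$. Take $I_0$ a component interval of $U$ meeting $K$ in positive measure, shrunk to a compact subinterval $I_0'$ that still meets $K$ in positive measure; this is possible since $K$ is covered by countably many such components. Set $\Sigma=\{(x,g(x)):x\in I_0'\}$ and $A=\{(x,\gamma(x)):x\in K\cap I_0'\}$. Then $A$ is compact, lies in $\Gamma\cap\Sigma$, and has $\mathcal H^1(A)\ge|K\cap I_0'|>0$. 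So $\Gamma$ has a positive curved trace, and Theorem \ref{thm:intro-curved-trace} gives $T(\Gamma)\le1$.

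The main obstacle is Step 2: verifying the Whitney conditions uniformly on a positive-measure compact set. If this becomes delicate, the fallback is a Lusin-type $C^2$ approximation theorem for convex functions, namely that a convex function agrees with a $C^2$ function off a set of small measure (the Alexandrov plus Whitney result as in Evans--Gariepy). Applying it to $\gamma$ directly yields $g$ with $g=\gamma$ and $g''=k$ a.e. on a large set, and Step 3 then proceeds unchanged.
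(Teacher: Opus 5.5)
Your proposal is correct and has the same architecture as the paper's proof. Both find a $C^2$ function $g$ agreeing with $\gamma$ on a compact set of positive measure where $g''$ is bounded below, read off a positive curved trace, and invoke Corollary~\ref{cor:curved-trace-threshold}. The difference is how $g$ is obtained and how $g''$ is controlled.

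The paper quotes the Goldstein--Haj{\l}asz Lusin theorem, which only guarantees $\gamma=g$ off a set of small measure. It then needs an identification step. At a Lebesgue density point $x_0$ of $G\cap\{k\geq c_0\}$ where the Alexandrov expansion holds, it compares the two second-order expansions along a sequence $x_j\to x_0$ in the agreement set. This forces $g'(x_0)=\gamma'(x_0)$ and $g''(x_0)=k(x_0)\geq c_0$, and continuity of $g''$ at that one point gives the interval.

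You instead essentially reprove the needed Lusin statement. You apply Whitney's $C^2$ extension theorem to the jet $(\gamma,\gamma',k)$ on a compact $K$ where the Alexandrov remainder is uniform. Then $g''=k\geq\epsilon$ holds on all of $K$ by construction, and no density-point comparison is needed.

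The cost is checking Whitney's conditions, and your sketch of the first-order one does work. The uniform expansion at $x\in K$ holds at all nearby points, not only points of $K$. So for $x,y\in K$ with $h=y-x$, the one-sided difference quotients of $\gamma$ at $y$ with step $\eta|h|$ pin $\gamma'(y)$ to $\gamma'(x)+k(x)h$ up to $O(\eta|h|)+o(h^2)/(\eta|h|)$. This is $o(|h|)$ once $\eta\to0$ slowly. For this you should take $K$ at positive distance from the endpoints of $[a,b]$. The restriction that $k$ oscillates by less than $\epsilon/4$ on $K$ is unnecessary.

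Your route is self-contained modulo the classical Whitney theorem. The paper's is shorter given the cited result, and its identification argument is reused verbatim in Proposition~\ref{prop:singular-no-trace}.

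One caution about your fallback. In the form the paper cites, the Lusin theorem gives only $\gamma=g$ off a small set. The claim that $g''=k$ a.e.\ on the agreement set is not part of that statement. It must be derived by exactly the density-point Taylor comparison the paper carries out.
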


The proof combines Theorem \ref{thm:intro-curved-trace} with the
$C^2$-Lusin approximation theorem for one-dimensional convex functions due
to Goldstein and Haj{\l}asz \cite{GoldsteinHajlasz}. Thus a nonzero
absolutely continuous part of the curvature measure already forces the
optimal universal threshold. The point is not that it supplies a convenient
route to Fourier decay; it supplies a positive curved trace.

This also explains why the Lebesgue decomposition of the curvature measure is
natural for the present problem, but the singular part must be treated with
some care. The absolutely continuous part can be detected on a
positive-measure set of parameters and, through the $C^2$-Lusin
approximation, converted into agreement with a genuinely curved $C^2$ graph
on a positive-measure set. Atomicity alone, however, does not determine the
geometry. A finitely supported atomic curvature measure produces polygonal
behavior, whereas a purely atomic measure whose atoms are dense may make the
slope strictly increasing and its primitive strictly convex. The
Cantor-quantile pieces used in Section \ref{sec:strict-example} have precisely
this feature; see Remark \ref{rem:q-atomic}. Purely singular continuous
curvature is a second singular regime. It may also force the slope to change
on every interval while remaining concentrated on a Lebesgue-null set. The
argument producing a positive curved trace does not treat either singular
regime in general.

Before giving the strictly convex construction, we record a simpler example
in which Fourier decay fails completely.

\begin{theorem}[No decay on any subarc]\label{thm:intro-nonrajchman}
There is a convex Lipschitz function $\gamma_0:[1,2]\to\mathbb R$ whose graph
$\Gamma_0$ satisfies
\[
T(\Gamma_0)=1,
\]
but for every nondegenerate interval $I\subset[1,2]$, if
\[
\sigma_I
=
\mathcal H^1|_{\{(x,\gamma_0(x)):x\in I\}},
\]
then $\sigma_I$ is not a Rajchman measure. More precisely, there is a unit
vector $\omega_I$ such that
\[
\widehat\sigma_I(R\omega_I)
\not\longrightarrow0
\qquad\text{as }R\to\infty.
\]
\end{theorem}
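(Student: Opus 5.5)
We construct $\gamma_0$ so that its graph agrees with a parabola on a fat Cantor set and contains a straight segment in every complementary gap. Theorem \ref{thm:intro-curved-trace} then gives the threshold, and the segments destroy the Rajchman property on every subarc.

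\emph{Construction.} Fix a compact nowhere dense set $K\subset[1,2]$ with $|K|>0$ and $1,2\in K$; a fat Cantor set works. Let $p(x)=x^2/2$. The complementary open intervals $J=(u,v)$ of $K$ are dense in $[1,2]$. On each such $J$ we replace $p$ by a convex function $\gamma_0$ with the following properties:
\begin{itemize}
\item[(i)] $\gamma_0$ agrees with $p$ to first order at $u$ and $v$, that is, $\gamma_0=p$ and $\gamma_0'=p'$ there;
\item[(ii)] $\gamma_0$ contains a genuine line segment $L_J$ over a subinterval of $J$ of positive length.
\end{itemize}
One explicit choice uses the tangent lines of $p$ at $u$ and $v$, which meet at $m=(u+v)/2$. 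On $J$, set the slope $\gamma_0'$ to rise from $u$ to $p'(m)$ at a faster rate on $[u,u+\ell]$, stay constant on $[u+\ell,v-\ell]$, and rise again on $[v-\ell,v]$. The parameters are tuned so that $\int_u^v\gamma_0'=p(v)-p(u)$. For instance, take $\gamma_0'$ piecewise linear with slope $2\cdot$(appropriate) on the end pieces; one parameter suffices to match the integral by symmetry.

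Define $\gamma_0=p$ on $K$. Then $\gamma_0'$ is nondecreasing and continuous, so $\gamma_0$ is convex and $C^1$, and it is Lipschitz with constant at most $2$.

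\emph{Threshold.} Take $\Sigma$ to be the graph of $p$, which has $g''=1$, and let $A=\{(x,p(x)):x\in K\}$. This is a compact subset of $\Gamma_0\cap\Sigma$ with $\mathcal H^1(A)\ge|K|>0$. So $\Gamma_0$ has a positive curved trace. Theorem \ref{thm:intro-curved-trace} together with \eqref{eq:intro-lower-threshold} gives $T(\Gamma_0)=1$. Alternatively, Theorem \ref{thm:intro-ac-curvature} applies, since the density $k$ equals $1$ a.e.\ on $K$.

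\emph{Non-Rajchman on every subarc.} Let $I$ be a nondegenerate interval. Because $K$ is nowhere dense, $I$ contains a gap $J$ and hence the segment $L_J$ with its interior. Let $\omega_I$ be the unit normal to $L_J$, with direction $(-s,1)/\sqrt{1+s^2}$ where $s$ is the slope of $L_J$. Decompose
\[
\sigma_I=\mathcal H^1|_{L_J}+\tau,\qquad \tau=\mathcal H^1|_{\Gamma_I\setminus L_J}.
\]
On $L_J$ the phase $x\cdot R\omega_I$ is constant, so $|\widehat{\mathcal H^1|_{L_J}}(R\omega_I)|=\mathcal H^1(L_J)>0$ for all $R$.

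It remains to show that $\widehat\tau(R\omega_I)\to0$. Here $\tau$ is the pushforward of $\sqrt{1+\gamma_0'^2}\,dx$ restricted to $I\setminus\pi(L_J)$, where $\pi$ is projection to the $x$-axis, under the map
\[
x\mapsto \phi(x)=\frac{\gamma_0(x)-sx}{\sqrt{1+s^2}}.
\]
This is the step I expect to be the main obstacle. Since $K$ has no interior, infinitely many other segments inside $I$ can have slopes close to $s$ but never equal to it, so we need a Riemann–Lebesgue argument for the pushforward of an absolutely continuous measure. The function $\gamma_0'-s$ vanishes exactly on $\pi(L_J)$: by strict monotonicity of the slope away from $L_J$, it is nonzero on the rest. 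Hence the pushforward of Lebesgue measure under $\phi$ restricted to $I\setminus\pi(L_J)$ is absolutely continuous. On each side, $\phi$ is strictly monotone and its derivative vanishes only on a null set, so the pushforward is an $L^1$ density. Its Fourier transform at $R$ therefore tends to $0$ by the Riemann–Lebesgue lemma. Consequently $|\widehat\sigma_I(R\omega_I)|\to\mathcal H^1(L_J)\neq0$.

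For this argument I need $\gamma_0'$ strictly increasing off the segments, which holds on $K$ because $\gamma_0'=x$ there and on the end pieces of each gap by construction. Segments in other gaps have slopes different from $s$, so they contribute only atoms of $\phi$'s derivative set where $\phi'\ne0$. The null-set condition therefore holds.
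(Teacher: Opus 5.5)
\textbf{There is a genuine gap, and it lies in the construction rather than in the Fourier step.} Your sentence ``Because $K$ is nowhere dense, $I$ contains a gap $J$'' is false. Nowhere density only says that $\operatorname{int}(I)\setminus K$ is a nonempty open set. So $I$ contains a nondegenerate subinterval of \emph{some} gap, not necessarily a whole gap.

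The theorem must hold for every nondegenerate $I$, including $I\subset(u,u+\ell)$ inside one end piece of a gap $(u,v)$ (or inside $(v-\ell,v)$). With your explicit choice, $\gamma_0'$ is affine there with slope $(m-u)/\ell>0$. Hence $\gamma_0$ is a quadratic polynomial on $[u,u+\ell]$, and the subarc over $I$ is a $C^2$ arc with curvature bounded away from zero. Its arclength measure is Rajchman; indeed $|\widehat\sigma_I(\xi)|\lesssim|\xi|^{-1/2}$ by the standard van der Corput/stationary phase estimate. So your $\gamma_0$ does not satisfy the statement. Property (ii), one segment per gap, is too weak. What is needed is that every subinterval of every gap contains a segment.

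\textbf{The repair is easy.} Drop the first-order matching condition (i), which the theorem does not require. Make $\gamma_0$ affine on the whole gap with slope $\frac{u+v}{2}$, as the paper does. Then $\int_u^v\frac{u+v}{2}\,dt=\int_u^v t\,dt$ still gives $\gamma_0=x^2/2$ on $K$, at the cost of corners at the gap endpoints.

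If you want to keep $\gamma_0\in C^1$, use instead the slope $u+(v-u)\,C\bigl(\frac{x-u}{v-u}\bigr)$ on each gap, where $C$ is the Cantor function. Since $\int_0^1 C=\frac12$, the integral condition holds, and the flat pieces are dense in every gap.

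\textbf{Comparison of the Fourier steps.} With either repair, your threshold argument is correct. Your Fourier step then also works, and it differs from the paper's. Because the slope is monotone, the set $\{x\in I:\gamma_0'(x)=s\}$ is an interval. So the pushforward of $\sigma_I$ under $z\mapsto z\cdot\omega_I$ is an atom of mass $\mathcal H^1(L)$ plus an absolutely continuous part, and Riemann--Lebesgue gives the stronger conclusion $|\widehat\sigma_I(R\omega_I)|\to\mathcal H^1(L)$. The paper uses only the existence of the atom together with Wiener's mean-square identity. That avoids checking absolute continuity of the remainder, but it only yields $\widehat\sigma_I(R\omega_I)\not\to0$.
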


The curve agrees with the parabola $y=\frac{x^2}{2}$ over a fat Cantor set
and is affine on every complementary interval. Its intersection with the
parabola has positive length, so the curved-trace theorem applies. Every
nontrivial subarc also contains a line segment. Projection in the normal
direction of that segment produces an atom, and Wiener's mean-square identity
rules out decay. The strict example replaces these line segments by carefully
chosen strictly convex pieces while retaining the same positive curved trace.

There is one further positive-measure statement which is useful at the
dimension-one endpoint and requires almost no curvature.

\begin{theorem}[Rectifiable endpoint]\label{thm:intro-rectifiable}
Let $\Gamma\subset\mathbb R^2$ be a compact rectifiable curve which is not
contained in a line. Suppose that $E\subset\mathbb R^2$ contains a countably
$1$-rectifiable set $R$ satisfying
\[
\mathcal H^1(R)>0.
\]
Then there are compact sets
\[
K_+\subset E+\Gamma
\qquad\text{and}\qquad
K_-\subset E-\Gamma
\]
such that $|K_+|>0$ and $|K_-|>0$.
\end{theorem}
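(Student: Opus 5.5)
Since $R$ is countably $1$-rectifiable with $\mathcal H^1(R)>0$, the plan is to pass to a single Lipschitz graph piece. Some $C^1$ curve meets $R$ in a set of positive length, and after a rotation and restriction that intersection is a compact set $F$ with $\mathcal H^1(F)>0$. We may take $F$ inside the graph of a $C^1$ function $f$ over an interval $J$, with $F=\{(u,f(u)):u\in F_0\}$ and $|F_0|>0$. By a Lebesgue density argument we may also assume $F_0$ lies in a short interval on which $f'$ varies by less than any prescribed $\epsilon$. So $F$ is essentially a positive-measure subset of a nearly straight segment with direction $e=(1,f'(u_0))/|\cdot|$.

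Next we use that $\Gamma$ is not contained in a line. Then $\Gamma$ contains two points $p,q$ such that $q-p$ is not parallel to $e$. More usefully, since $\Gamma$ is a rectifiable curve not in a line, it has a subarc $\Gamma'$ of positive length on which the tangent direction (defined $\mathcal H^1$-a.e.) stays in a small cone transverse to $e$. To see this, note that the tangents cannot all be parallel to $e$ a.e., since otherwise $\Gamma$ would be a segment parallel to $e$. Choose points of differentiability and use a density argument to obtain a compact $G\subset\Gamma$ with $\mathcal H^1(G)>0$ lying on a $C^1$ graph $\{v\,e^\perp\text{-direction}\}$ whose tangent makes an angle at least $\theta>0$ with $e$. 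The main step is then to show
\[
|F+G|>0 .
\]
Parametrize $F$ by $u\in F_0$ and $G$ by $v\in G_0$ with $|G_0|>0$, and consider the map $\Psi(u,v)=(u,f(u))+(v\text{-parametrization of }G)$. This map is $C^1$ on a neighborhood of $J\times J'$. Its Jacobian is the determinant of the two tangent vectors, which is bounded below by $\sin\theta$ times their lengths because the directions are transverse. Shrinking the intervals, $\Psi$ becomes a $C^1$ diffeomorphism onto its image. Hence $|\Psi(F_0\times G_0)|\gtrsim|F_0||G_0|>0$, and $K_+=\Psi(F_0\times G_0)=F+G\subset E+\Gamma$ is compact. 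Replacing $G$ by $-G\subset-\Gamma$ gives $K_-$ in the same way.

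The main obstacle is the requirement that $G$ be a positive-length subset of a $C^1$ graph with tangents transverse to $e$. Rectifiable curves are only Lipschitz, so first I would parametrize $\Gamma$ by arclength $\eta$, which is Lipschitz with $|\eta'|=1$ a.e. The set $\{t:\eta'(t)\not\parallel e\}$ has positive measure, because otherwise $\eta-\langle\eta,e\rangle e$ would have zero derivative a.e., would be constant, and $\Gamma$ would lie in a line. On this set I would apply the Whitney/Lusin $C^1$ approximation for Lipschitz functions to get a $C^1$ curve $\tilde\eta$ agreeing with $\eta$, together with its derivative, on a compact positive-measure set where $\eta'$ is near some fixed $w\not\parallel e$. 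The argument above then applies with $\tilde\eta$ in place of the graph parametrization.
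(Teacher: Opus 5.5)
Your argument is correct, but it is organized differently from the paper's.

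\textbf{The paper's route.} The paper never passes to $C^1$ pieces. It takes a Lipschitz parametrization $e$ of a positive-length piece of $R$ and the arclength parametrization $r$ of all of $\Gamma$. It forms the Lipschitz map $F_+(s,t)=e(s)+r(t)$ on $A\times[0,L]$ and applies the area formula for Lipschitz maps. Transversality is needed only in integrated form. For each fixed $s$ with $e'(s)\neq0$, the absolute-continuity argument you also use shows $\int_0^L|\det(e'(s),r'(t))|\,dt>0$. Tonelli and the area formula then force $|F_+(A\times[0,L])|>0$. So the paper needs neither a transverse subarc of $\Gamma$ nor any localization of tangent directions.

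\textbf{Your route.} You use the $C^1$ structure theorem for countably rectifiable sets and the $C^1$ Lusin (Whitney) approximation of Lipschitz maps. These replace both pieces by $C^1$ curves with transverse tangents, after which the inverse function theorem and the ordinary change-of-variables formula suffice. This costs two nontrivial approximation theorems. In exchange you get an injective piece and the explicit bound $|K_+|\gtrsim|F_0||G_0|$. The paper's area formula only yields $\int N(F_+,A\times[0,L],z)\,dz>0$, which is enough for positivity but gives no control of multiplicity.

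\textbf{Order of choices.} As written, you localize $F_0$ to an $\epsilon$-window of slopes before finding $w$. The angle between $w$ and $e$ could be smaller than that preset $\epsilon$. The fix is to reorder the steps:
\begin{enumerate}
\item Fix $u_0$, a density point of $F_0$, and let $e$ be the tangent direction there.
\item Choose $w$ and $G_0$, and fix a density point $v_0$ of $G_0$ at which $\tilde\eta'(v_0)=\eta'(v_0)\not\parallel e$.
\item Apply the inverse function theorem at $(u_0,v_0)$, and only then shrink to neighborhoods of $u_0$ and $v_0$. Both intersections retain positive measure.
\end{enumerate}
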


This follows directly from the area formula applied to
$(s,t)\mapsto e(s)+r(t)$, where $e$ and $r$ parametrize rectifiable pieces of
$E$ and $\Gamma$. In the special case where $\Gamma$ is the graph of a $C^1$
function with injective derivative, the conclusion can also be deduced from
the nonlinear two-projection theorem of Li and Taylor
\cite[Corollary~2.5]{LiTaylorTwoProjection}. Indeed, after restricting $R$ to
a compact positive-length subset whose horizontal diameter is smaller than
the length of the parameter interval, one may extend the graph function to a
$C^1$ function on $\mathbb R$ with injective derivative. On a nondegenerate
interval of section parameters, the corresponding curve projections are then
precisely the vertical sections of $R+\Gamma$. The contrapositive of their
corollary says that at most one of these sections can have zero length, and
Fubini's theorem gives $|R+\Gamma|>0$. Applying the same argument to
$-\Gamma$ gives the difference-set conclusion. This places this special case
of Theorem \ref{thm:intro-rectifiable} within the framework of the
two-projection theorem and its nonlinear analogues.

The variable-surface result of Iosevich, Li, and Taylor
\cite{IosevichLiTaylor} also contains a fixed-level endpoint theorem for
positive-length $1$-rectifiable parameter sets under rotational curvature. In
the planar translation-invariant setting, Theorem
\ref{thm:intro-rectifiable} shows that curvature can be dispensed with
entirely: it is enough that $\Gamma$ be rectifiable and not contained in a
line. For smooth curves of nonvanishing curvature, Simon and Taylor proved a
sharper characterization at dimension one in terms of pure unrectifiability
\cite{SimonTaylor}. The theorem above records the positive direction under
almost no curvature assumption.

For convex graphs, the decomposition
\eqref{eq:intro-curvature-decomposition} suggests several distinct regimes.
A nonzero absolutely continuous component gives $T(\Gamma)=1$. A finitely
supported atomic curvature measure produces polygonal behavior, but atomic
curvature with dense support may instead produce a strictly convex graph. Atomicity by
itself therefore does not determine the threshold. Purely singular continuous
curvature gives another natural unresolved regime. Such a curve may be
strictly convex and, by Proposition \ref{prop:singular-no-trace}, contains no
positive curved trace. These observations lead to the sharper
question of whether strict convexity alone forces
\[
T(\Gamma)=1.
\]

We emphasize the sharpness and the locality of the results. The lower bound
$T(\Gamma)\geq1$ uses only that $\Gamma$ is a graph over an interval of
positive length, and therefore none of our positive results can improve the
dimension threshold. The curved-trace theorem reaches this lower bound from a
hypothesis which may hold on a very small topological portion of the curve:
the trace can be nowhere dense and need not contain any subarc. What matters
is that it has positive length and lies on a uniformly curved $C^2$ graph.

The Fourier statements are deliberately local in the opposite direction. In
Theorems \ref{thm:intro-strict-example} and \ref{thm:intro-nonrajchman}, the
bad Fourier behavior is required on every nontrivial subarc. This prevents the
optimal threshold from being explained by selecting some other ordinary
subarc on which arclength has the familiar curvature decay. The two examples
serve different purposes. The non-Rajchman example gives the strongest
failure of Fourier decay, but it contains line segments. The strictly convex
example removes those segments and shows that strict convexity alone does not
restore any polynomial arclength decay. In both cases the positive curved
trace is what retains the sharp Minkowski-sum conclusion.

The endpoint theorem has a different role. It does not strengthen the
universal statement for arbitrary sets of Hausdorff dimension one. Instead,
it shows that once the translating set itself contains positive-length
rectifiable structure, very little curvature is needed from $\Gamma$: the
curve need only fail to lie in a line. We include this result because it
clarifies which part of the phenomenon belongs to the geometry of the curve
and which part belongs to the geometry of the translating set.

The overlap framework used here builds on the variable-surface setting of
\cite{IosevichLiTaylor}, but the present paper develops a translation-invariant
version adapted to convex curves of very low regularity. In particular, the
argument requires neither smoothness of the entire curve nor pointwise Fourier
decay of arclength; the essential geometric input is the presence of a
positive curved trace.

Section \ref{sec:overlap} proves the translated-tube principle, the
positive curved-trace theorem, and the quantitative curve-projection
corollary. Section \ref{sec:curvature-measure} treats
the absolutely continuous part of the curvature measure. Section
\ref{sec:nonrajchman} gives the convex non-Rajchman example. Section
\ref{sec:strict-example} constructs the strictly convex example and proves
the failure of polynomial Fourier decay on every subarc. Section
\ref{sec:rectifiable} proves the rectifiable endpoint theorem. Section
\ref{sec:fourier-comparison} records the direct Fourier benchmark,
gives a quantitative result for singular curvature, and formulates the
remaining strict-convexity question.

\section{Translated tubes and positive curved traces}\label{sec:overlap}

As above, for a compact set $A\subset\mathbb R^2$ and $\delta>0$, we write
\begin{equation}\label{eq:tube-definition}
A^\delta
=
\{x\in\mathbb R^2:\operatorname{dist}(x,A)\leq\delta\}.
\end{equation}
\begin{proposition}\label{prop:graph-threshold-lower}
Let $\Gamma$ be the graph of a continuous function on a nondegenerate compact
interval. Then
\[
T(\Gamma)\geq1.
\]
\end{proposition}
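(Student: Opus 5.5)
The plan is to exhibit, for every $t<1$, a compact set $E$ with $\dim_{\mathrm H}(E)>t$ and $|E+\Gamma|=0$. We will in fact take a single set $E$ with $\dim_{\mathrm H}(E)=1$, which handles all $t<1$ at once. By the definition \eqref{eq:intro-threshold}, a value $t$ is admissible only if every compact $E$ with $\dim_{\mathrm H}(E)>t$ satisfies $|E+\Gamma|>0$. One such $E$ with $\dim_{\mathrm H}(E)=1$ and $|E+\Gamma|=0$ therefore rules out every $t<1$, so the infimum is at least $1$.

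\emph{Construction of $B$.} First fix a compact set $B\subset\mathbb R$ with $|B|=0$ and $\dim_{\mathrm H}(B)=1$. Concretely, let $C_n$ be the middle-Cantor set obtained by removing, at each stage, the central open interval of relative length $1/n$. Then $C_n$ is compact, null, and has dimension $\log 2/\log\bigl(2n/(n-1)\bigr)\to1$. Translate $C_n$ into $[2^{-n},2^{-n+1}]$ after scaling by $2^{-n-1}$; scaling does not change dimension. Now put $B=\{0\}\cup\bigcup_n(\text{scaled }C_n)$. This set is compact, because the pieces accumulate only at $0$. It is a countable union of null sets, so $|B|=0$. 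By monotonicity its dimension is at least $\sup_n\dim_{\mathrm H} C_n=1$, and at most $1$ since $B\subset\mathbb R$.

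\emph{The Fubini step.} Set $E=\{0\}\times B$. This set is compact and has $\dim_{\mathrm H}(E)=1$, since the embedding $b\mapsto(0,b)$ is an isometry. Write $\Gamma=\{(t,\gamma(t)):t\in[a,b]\}$ with $\gamma$ continuous. Then
\[
E+\Gamma=\{(t,\gamma(t)+y):t\in[a,b],\ y\in B\},
\]
which is the image of the compact set $[a,b]\times B$ under a continuous map, hence compact and measurable. Its vertical section over $\alpha$ is $\gamma(\alpha)+B$ for $\alpha\in[a,b]$ and empty otherwise. This matches \eqref{eq:intro-curve-projection}: $\Phi_\alpha^\Gamma(E)$ is exactly this section. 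Each section has one-dimensional measure $|B|=0$, so Fubini (equivalently \eqref{eq:intro-favard-sum-identity}) gives $|E+\Gamma|=0$.

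\emph{Conclusion.} For any $t<1$, this $E$ satisfies $\dim_{\mathrm H}(E)=1>t$ but $|E+\Gamma|=0$. Hence the implication in \eqref{eq:intro-threshold} fails for $t$, and $T(\Gamma)\ge1$. The only mildly delicate step is producing $B$ that is simultaneously compact, null, and of full dimension one; the union of shrinking Cantor sets accumulating at a point handles this. Continuity of $\gamma$ is used only to guarantee that $E+\Gamma$ is compact, so that Fubini applies without measurability concerns.
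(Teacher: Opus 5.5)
Your proposal is correct and follows essentially the same route as the paper: take $E=\{0\}\times B$ with $B$ compact, null, and of Hausdorff dimension one, observe that each vertical section of $E+\Gamma$ is a translate $\gamma(\alpha)+B$, and apply Fubini. The only difference is that you construct $B$ explicitly, as a compact union of rescaled Cantor sets accumulating at $0$, and you note that $E+\Gamma$ is compact; the paper simply asserts that such a $B$ exists.
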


\begin{proof}
Choose a compact set $B\subset[0,1]$ with $|B|=0$ and
$\dim_{\mathrm H}(B)=1$, and put $E=\{0\}\times B$. If
\[
\Gamma=\{(x,\gamma(x)):x\in[a,b]\},
\]
then the vertical section of $E+\Gamma$ above $x\in[a,b]$ is
$\gamma(x)+B$. It has one-dimensional Lebesgue measure zero. Fubini's
theorem gives $|E+\Gamma|=0$, while $\dim_{\mathrm H}(E)=1$.
\end{proof}

We next isolate the physical-space argument which turns a
translated-neighborhood estimate into a positive-measure theorem. This is the
translation-invariant form of the geometric intersection mechanism used in
\cite{IosevichLiTaylor}. Closely related intersection estimates for translates
of smooth curved tubes appear in \cite[Section~4]{Yi}. Our purpose here is to
formulate the precise uniform area estimate needed for the second-moment
argument and then to apply it to positive-length traces inside curves of very
low regularity.

\begin{theorem}[Overlap-to-coverage principle]\label{thm:overlap-coverage}
Let $A\subset\mathbb R^2$ be compact. Suppose that there are constants
$c,C>0$ and $\delta_0>0$ such that
\begin{equation}\label{eq:tube-volume-hypothesis}
|A^\delta|
\geq
c\delta
\end{equation}
for $0<\delta<\delta_0$, and
\begin{equation}\label{eq:tube-overlap-hypothesis}
|A^\delta\cap(h+A^\delta)|
\leq
C\frac{\delta^2}{\delta+|h|}
\end{equation}
for every $h\in\mathbb R^2$ and $0<\delta<\delta_0$.

Let $E\subset\mathbb R^2$ be compact, let $\mu$ be a nonzero finite positive
measure supported on $E$, and, for $0<\delta<\delta_0$, put
\begin{equation}\label{eq:regularized-one-energy}
J_\delta(\mu)
=
\iint
\frac{d\mu(x)d\mu(y)}{\delta+|x-y|}.
\end{equation}
Then
\begin{equation}\label{eq:regularized-overlap-coverage}
|E+A^\delta|
\geq
c'
\frac{\mu(E)^2}{J_\delta(\mu)}.
\end{equation}
If $I_1(\mu)<\infty$, then
\begin{equation}\label{eq:quantitative-overlap-coverage}
|E+A|
\geq
c'
\frac{\mu(E)^2}{I_1(\mu)}.
\end{equation}
Consequently,
\[
\dim_{\mathrm H}(E)>1
\quad\Longrightarrow\quad
|E+A|>0.
\]
The constant $c'>0$ depends only on the constants in
\eqref{eq:tube-volume-hypothesis} and
\eqref{eq:tube-overlap-hypothesis}.
\end{theorem}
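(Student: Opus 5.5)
We prove \eqref{eq:regularized-overlap-coverage} by a second-moment (Cauchy--Schwarz) argument applied to the function
\[
F_\delta(z)=\int 1_{A^\delta}(z-x)\,d\mu(x)=\mu*1_{A^\delta}(z),
\]
which is supported in $E+A^\delta$.

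\emph{Step 1: first and second moments.} By Fubini and \eqref{eq:tube-volume-hypothesis},
\[
\int F_\delta=\mu(E)\,|A^\delta|\geq c\delta\,\mu(E).
\]
Expanding the square and using \eqref{eq:tube-overlap-hypothesis} with $h=y-x$,
\[
\int F_\delta^2=\iint |(x+A^\delta)\cap(y+A^\delta)|\,d\mu(x)\,d\mu(y)\leq C\delta^2J_\delta(\mu).
\]

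\emph{Step 2: Cauchy--Schwarz.} Since $F_\delta$ vanishes off $E+A^\delta$,
\[
\Bigl(\int F_\delta\Bigr)^2\leq |E+A^\delta|\int F_\delta^2.
\]
Substituting the bounds from Step 1 gives
\[
c^2\delta^2\mu(E)^2\leq C\delta^2 J_\delta(\mu)\,|E+A^\delta|.
\]
The factors $\delta^2$ cancel, which yields \eqref{eq:regularized-overlap-coverage} with $c'=c^2/C$. This cancellation is exactly why the two hypotheses are calibrated as they are: the tube volume is of order $\delta$, and the overlap is of order $\delta^2$ at bounded separation.

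\emph{Step 3: limit $\delta\to0$.} Note that $J_\delta(\mu)\le I_1(\mu)$ for every $\delta$, so $|E+A^\delta|\geq c'\mu(E)^2/I_1(\mu)$ uniformly in $\delta$. Because $E$ and $A$ are compact, $E+A$ is compact. We claim $E+A^\delta\subset (E+A)^\delta$. Indeed, if $z=x+a'$ with $|a'-a|\le\delta$ for some $a\in A$, then $\operatorname{dist}(z,x+a)\le\delta$. Since the sets $(E+A)^\delta$ decrease to $E+A$ as $\delta\downarrow0$ by compactness, continuity of Lebesgue measure from above gives
\[
|E+A|=\lim_{\delta\to0}|(E+A)^\delta|\geq c'\frac{\mu(E)^2}{I_1(\mu)},
\]
which is \eqref{eq:quantitative-overlap-coverage}. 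This passage to the limit is the only step needing care, and compactness is the point that makes it work.

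\emph{Step 4: dimension consequence.} If $\dim_{\mathrm H}(E)>1$, choose $1<\beta<\dim_{\mathrm H}(E)$. Frostman's lemma \cite{Mattila} gives a nonzero measure $\mu$ on $E$ with $\mu(B(x,r))\lesssim r^\beta$. Summing over dyadic shells then shows $I_1(\mu)<\infty$. Applying \eqref{eq:quantitative-overlap-coverage} gives $|E+A|>0$.

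The argument presents no serious obstacle. The one step requiring care is the limiting argument in Step 3, and it goes through because $E+A$ is compact.
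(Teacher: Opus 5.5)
Your proof is correct and follows the paper's argument. Both use the same second-moment function (the paper normalizes it by $|A^\delta|$, which is only cosmetic), the same Cauchy--Schwarz step giving $c'=c^2/C$, the bound $J_\delta(\mu)\le I_1(\mu)$, and Frostman with dyadic shells. The only variation is in the limit $\delta\to0$: you use $E+A^\delta\subset(E+A)^\delta$ together with closedness of $E+A$, whereas the paper proves $\bigcap_j(E+A^{\delta_j})=E+A$ by a subsequence argument, and both routes are valid.
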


Notice that the overlap hypothesis with $h=0$ already gives
$|A^\delta|\leq C\delta$. Thus the lower bound in
\eqref{eq:tube-volume-hypothesis} is the only separate tube-volume assumption.

\begin{proof}
For $0<\delta<\delta_0$, define
\begin{equation}\label{eq:Fdelta}
F_\delta(z)
=
\int_E
\frac{1_{x+A^\delta}(z)}{|A^\delta|}
d\mu(x).
\end{equation}
The function $F_\delta$ is supported on $E+A^\delta$ and
\begin{equation}\label{eq:Fdelta-mass}
\int F_\delta(z)dz
=
\mu(E).
\end{equation}
Fubini's theorem gives
\begin{align}
\|F_\delta\|_2^2
&=
\iint
\frac{|A^\delta\cap((y-x)+A^\delta)|}{|A^\delta|^2}
d\mu(x)d\mu(y)
\notag\\
&\leq
C_0
\iint
\frac{d\mu(x)d\mu(y)}{\delta+|x-y|}
\notag\\
&=
C_0J_\delta(\mu)
<
\infty.
\label{eq:Fdelta-energy}
\end{align}
By Cauchy--Schwarz and \eqref{eq:Fdelta-mass},
\[
\mu(E)^2
\leq
|E+A^\delta|\|F_\delta\|_2^2.
\]
Equation \eqref{eq:Fdelta-energy} proves
\eqref{eq:regularized-overlap-coverage}.

Suppose now that $I_1(\mu)<\infty$. This assumption implies that $\mu$ has
no atoms, and hence the diagonal has $\mu\times\mu$ measure zero. Since
\[
\frac{1}{\delta+|x-y|}
\leq
\frac{1}{|x-y|}
\]
away from the diagonal, it follows that $J_\delta(\mu)\leq I_1(\mu)$. Thus
\eqref{eq:regularized-overlap-coverage} gives
\[
|E+A^\delta|
\geq
\frac{\mu(E)^2}{C_0I_1(\mu)},
\]
a lower bound independent of $\delta$. Choose a decreasing sequence
$0<\delta_j<\delta_0$ with $\delta_j\downarrow0$. The compact
sets $E+A^{\delta_j}$ decrease, and
\begin{equation}\label{eq:intersection-thickened-sums}
\bigcap_j(E+A^{\delta_j})
=
E+A.
\end{equation}
Indeed, if $z=x_j+y_j$ belongs to every set on the left, with $x_j\in E$ and
$\operatorname{dist}(y_j,A)\leq\delta_j$, then $x_j$ lies in the compact set
$E$ and $y_j$ lies in the compact set $A^{\delta_1}$. Passing to a common
subsequence, we may suppose that $x_j\to x\in E$ and $y_j\to y$. Since
$\operatorname{dist}(y_j,A)\to0$ and $A$ is closed, one has $y\in A$.
Thus $z=x+y$. The reverse
inclusion in \eqref{eq:intersection-thickened-sums} is immediate. Continuity
from above of Lebesgue measure now proves
\eqref{eq:quantitative-overlap-coverage}.

If $\dim_{\mathrm H}(E)>1$, choose $s$ with
$1<s<\dim_{\mathrm H}(E)$. Frostman's lemma gives a nonzero
$s$-Frostman measure $\mu$ on $E$. Thus
$\mu(B(x,r))\leq C_\mu r^s$. Splitting the region $|x-y|<1$ into the
annuli
\[
2^{-j-1}\leq |x-y|<2^{-j},
\qquad j\geq0,
\]
and using the trivial bound $|x-y|^{-1}\leq1$ when $|x-y|\geq1$, we obtain
\[
I_1(\mu)
\leq
\mu(E)^2
+
C\mu(E)\sum_{j=0}^\infty 2^{-j(s-1)}
<
\infty.
\]
The quantitative statement applies.
\end{proof}

The exponent in the overlap estimate is exactly matched to the $1$-energy of
the translating measure. When $|h|$ is large compared with $\delta$, two
curved $\delta$-tubes overlap in area $O(\delta^2/|h|)$; after normalizing each
translate by
$|A^\delta|\asymp\delta$, the resulting second moment is controlled by the
kernel $|x-y|^{-1}$. This is the physical-space counterpart of the
half-derivative of smoothing that appears in the classical Fourier argument
for planar curves. The proof above needs only this averaged overlap control
and does not require pointwise information about the Fourier transform of
arclength.

Figure \ref{fig:translated-tube-overlap} shows the geometry behind the
estimate. When the horizontal component of the translation is $u$, uniform
curvature forces the slopes of the two graphs at a potential intersection to
differ by a quantity comparable to $|u|$. Thus, once $|h|$ is larger than the
tube thickness, the two tubes can remain close only over a horizontal interval
of length $O(\delta/|h|)$. Multiplying by the transverse thickness
$O(\delta)$ gives the required area bound.

\begin{figure}[H]
\centering
\begin{tikzpicture}[x=1cm,y=1cm,>=Stealth]
  \draw[line width=.85pt]
    plot[samples=120,domain=-3.4:3.4] (\x,{0.09*\x*\x-.55});
  \draw[densely dashed,line width=.7pt]
    plot[samples=120,domain=-2.8:4.0] (\x,{0.09*(\x-0.65)*(\x-0.65)-.12});

  \draw[line width=.45pt]
    plot[samples=120,domain=-3.4:3.4] (\x,{0.09*\x*\x-.40});
  \draw[line width=.45pt]
    plot[samples=120,domain=-3.4:3.4] (\x,{0.09*\x*\x-.70});

  \draw[densely dashed,line width=.4pt]
    plot[samples=120,domain=-2.8:4.0] (\x,{0.09*(\x-0.65)*(\x-0.65)+.03});
  \draw[densely dashed,line width=.4pt]
    plot[samples=120,domain=-2.8:4.0] (\x,{0.09*(\x-0.65)*(\x-0.65)-.27});

  \draw[->,line width=.65pt] (-2.55,-.30)--(-1.90,.13)
    node[midway,above left,font=\small] {$h$};

  \draw[<->,line width=.5pt] (1.05,-.44)--(1.05,-.14)
    node[midway,right,font=\scriptsize] {$\delta$};

  \draw[<->,line width=.5pt] (-.20,-.36)--(.55,-.36)
    node[midway,below=2pt,font=\scriptsize] {$O(\delta/|h|)$};

  \draw[rounded corners=1pt,line width=.65pt]
    (-.28,-.52) rectangle (.62,-.03);

  \node[font=\small] at (-2.55,.90) {$\Sigma^\delta$};
  \node[font=\small] at (2.95,1.02) {$h+\Sigma^\delta$};
  \node[font=\scriptsize,align=center] at (1.35,.52)
    {overlap region\\$O(\delta^2/|h|)$};
  \draw[->,line width=.45pt] (1.05,.40)--(.48,-.02);
\end{tikzpicture}
\caption{Schematic translated-tube geometry. Uniform curvature makes the
relative slope of the two translated graphs grow linearly with the horizontal
part of the translation. Away from the regime $|h|\lesssim\delta$, the
intersection therefore has longitudinal size $O(\delta/|h|)$ and transverse
size $O(\delta)$.}
\label{fig:translated-tube-overlap}
\end{figure}

We now prove the overlap estimate for a uniformly curved graph.

\begin{theorem}[Uniformly curved graphs]\label{thm:curved-graph-overlap}
Let $g\in C^2([a,b])$ and suppose that
\begin{equation}\label{eq:curvature-lower}
|g''(x)|\geq c_0>0
\end{equation}
for every $x\in[a,b]$. Let
\[
\Sigma
=
\{(x,g(x)):a\leq x\leq b\}.
\]
There are constants $C>0$ and $\delta_0>0$ such that
\begin{equation}\label{eq:curved-graph-overlap}
|\Sigma^\delta\cap(h+\Sigma^\delta)|
\leq
C\frac{\delta^2}{\delta+|h|}
\end{equation}
for every $h\in\mathbb R^2$ and $0<\delta<\delta_0$.
\end{theorem}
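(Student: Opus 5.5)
We reduce the problem to vertical tubes over the graph and then split into cases according to the size of the horizontal part of $h$. Since $|g'|\le L$ on $[a,b]$, every point of $\Sigma^\delta$ has horizontal coordinate in $[a-\delta,b+\delta]$. Such a point lies within vertical distance $C_1\delta$ of the graph of an extension $\tilde g$ of $g$; here $C_1=1+L$, and $\tilde g$ extends $g$ by its tangent lines, so it is $C^1$ with $\tilde g'$ strictly monotone. Hence
\[
\Sigma^\delta\subset V_\delta=\{(x,y):x\in[a-\delta,b+\delta],\ |y-\tilde g(x)|\le C_1\delta\}.
\]
Write $h=(u,v)$. Then $h+V_\delta$ is the set of points $(x,y)$ with $x-u\in[a-\delta,b+\delta]$ and $|y-v-\tilde g(x-u)|\le C_1\delta$. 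Using Fubini in $x$, the intersection has measure at most
\[
2C_1\delta\,\bigl|\{x\in J:|\phi(x)|\le 2C_1\delta\}\bigr|,
\qquad \phi(x)=\tilde g(x)-\tilde g(x-u)-v,
\]
where $J$ is an interval of length at most $b-a+2\delta$. The trivial bound $|V_\delta|\lesssim\delta$ handles $|h|\lesssim\delta$, and more generally it handles bounded $|h|$ up to constants. So the task is to prove $|\{x\in J:|\phi(x)|\le 2C_1\delta\}|\lesssim\delta/|h|$ when $|h|\ge\delta$.

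In the first case, $|u|\ge\eta|h|$ for a small constant $\eta$. By the mean value theorem, $\phi'(x)=\tilde g'(x)-\tilde g'(x-u)$. On the original interval this is at least $c_0|u|$ in absolute value with a fixed sign. For the extension, I will instead take $\tilde g$ to be a $C^2$ extension with $|\tilde g''|\ge c_0/2$ on $[a-1,b+1]$, for instance by a quadratic Taylor continuation. This avoids any degeneration at the endpoints. Then $\phi$ is strictly monotone with $|\phi'|\ge (c_0/2)|u|$ wherever both $x$ and $x-u$ lie in the domain. The sublevel set $\{|\phi|\le 2C_1\delta\}$ is therefore an interval of length at most $8C_1\delta/(c_0|u|)\lesssim\delta/|h|$. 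If $|u|>b-a+2\delta$, the horizontal ranges are disjoint and the intersection is empty.

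In the second case, $|u|<\eta|h|$, so $|v|\ge|h|/2$ and the translation is nearly vertical. Here $|\phi(x)|\ge|v|-L'|u|\ge|h|/2-L'\eta|h|\ge|h|/4$ once $\eta$ is small relative to the Lipschitz constant $L'$ of $\tilde g$. Since $|h|\ge\delta$ with a large enough constant (say $|h|\ge 16C_1\delta$; smaller $h$ falls under the trivial regime), the sublevel set is empty. So in this case the overlap vanishes.

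Combining the trivial bound $C\delta$ for $|h|\le 16C_1\delta$ with the bound $C\delta^2/|h|$ otherwise gives $C\delta^2/(\delta+|h|)$, uniformly for $\delta<\delta_0\le 1$.

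The main obstacle is the bookkeeping near the endpoints of $[a,b]$. Neighborhood points there do not sit over the graph's domain, and the slope-difference argument needs the curvature lower bound on the whole range of $x$ and $x-u$. Extending $g$ to a $C^2$ function with $|\tilde g''|\ge c_0/2$ on a slightly larger interval, and restricting $\delta_0$ accordingly, is the step to handle first. Once that is in place, the rest is a one-dimensional sublevel-set estimate.
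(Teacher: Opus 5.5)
Your proof is correct and follows the paper's argument: you reduce to vertical tubes, use that $\phi'(x)=\tilde g'(x)-\tilde g'(x-u)$ has fixed sign and size $\gtrsim|u|$ to bound the sublevel set by $\lesssim\delta/|u|$, and use the Lipschitz bound to show that a nonempty overlap with $|h|\gg\delta$ forces $|u|\gtrsim|h|$. The only real difference is the endpoint bookkeeping: your quadratic Taylor continuation puts $\Sigma^\delta$ inside a single vertical tube over $[a-\delta,b+\delta]$, whereas the paper keeps the tube over $[a,b]$ and treats two endpoint disks of radius $O(\delta)$ separately. Two small slips do not affect the argument: the tangent-line extension does not have strictly monotone $\tilde g'$ (you replace it anyway), and the trivial bound covers only $|h|\lesssim\delta$, not all bounded $|h|$ (you never use that claim).
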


\begin{proof}
Since $g''$ is continuous and never vanishes, it has constant sign. After
replacing $g$ by $-g$ if necessary, suppose that $g''\geq c_0$. Write
$h=(u,v)$ and let $L=\|g'\|_\infty$. Throughout the proof we use
\begin{equation}\label{eq:sum-max-comparison}
\max\{\delta,|h|\}
\leq
\delta+|h|
\leq
2\max\{\delta,|h|\}.
\end{equation}
Thus the right side of \eqref{eq:curved-graph-overlap} is comparable to
$\delta$ when $|h|\lesssim\delta$ and to $\delta^2/|h|$ when
$|h|\gtrsim\delta$. These are the two regimes treated below.

The Euclidean $\delta$-neighborhood of $\Sigma$ is contained in
\[
V_\delta
=
\{(x,y):a\leq x\leq b,\ |y-g(x)|\leq C_0\delta\},
\]
together with the two disks
\[
D_{a,\delta}
=
B((a,g(a)),C_0\delta),
\qquad
D_{b,\delta}
=
B((b,g(b)),C_0\delta),
\]
where $C_0$ depends only on $L$. Indeed, if $(x,y)$ is within $\delta$ of
$(t,g(t))\in\Sigma$ and $x\in[a,b]$, then
\[
|y-g(x)|
\leq
|y-g(t)|+L|x-t|
\leq
(1+L)\delta.
\]
If $x<a$ or $x>b$, then $t$ lies within $\delta$ of the corresponding
endpoint, and $(x,y)$ lies in one of the two displayed disks after increasing
$C_0$. We first estimate the overlap of the two vertical neighborhoods.

Suppose that a point of $V_\delta\cap(h+V_\delta)$ has horizontal coordinate
$x$. Then $x$ and $x-u$ both lie in $[a,b]$, and
\begin{equation}\label{eq:curved-sublevel}
|g(x)-g(x-u)-v|
\leq
2C_0\delta.
\end{equation}
Assume first that $u>0$. On the interval on which both terms are defined, the
function
\[
G_u(x)=g(x)-g(x-u)
\]
satisfies
\[
G_u'(x)
=
g'(x)-g'(x-u)
=
\int_{x-u}^x g''(t)dt
\geq
c_0u.
\]
Consequently, if $x_1<x_2$ both satisfy \eqref{eq:curved-sublevel}, then
\[
c_0u(x_2-x_1)
\leq
G_u(x_2)-G_u(x_1)
\leq
4C_0\delta.
\]
The set of possible horizontal coordinates therefore has length at most
$C\delta/u$. The same conclusion holds for $u<0$, with $|u|$ in place of
$u$. When $u=0$, we use the trivial bound by $b-a$. Since every vertical
section of the overlap has length at most $2C_0\delta$, we obtain
\begin{equation}\label{eq:vertical-curved-overlap}
|V_\delta\cap(h+V_\delta)|
\leq
C\delta
\min
\left\{
1,
\frac{\delta}{|u|}
\right\},
\end{equation}
where the minimum is understood to be $1$ when $u=0$.

If this overlap is nonempty, \eqref{eq:curved-sublevel} and the Lipschitz
property give
\[
|v|
\leq
|g(x)-g(x-u)|+2C_0\delta
\leq
L|u|+2C_0\delta.
\]
It follows that
\[
|h|
\leq
|u|+|v|
\leq
(1+L)|u|+2C_0\delta.
\]
Choose $C_1>4C_0$. If $|h|\geq C_1\delta$ and the overlap is nonempty, then
\[
|u|
\geq
\frac{|h|}{2(1+L)}.
\]
Equation \eqref{eq:vertical-curved-overlap} therefore gives
\[
|V_\delta\cap(h+V_\delta)|
\leq
C\frac{\delta^2}{|h|}.
\]
If $|h|<C_1\delta$, the trivial estimate
$|V_\delta\cap(h+V_\delta)|\leq |V_\delta|\leq C\delta$ is the right side of
\eqref{eq:curved-graph-overlap}, up to a constant, by
\eqref{eq:sum-max-comparison}.

It remains to estimate the intersections in which at least one of the two
sets is an endpoint disk. Each such intersection is contained in a disk of
radius $C_0\delta$, and hence has area at most $C\delta^2$. Moreover, every
point of $V_\delta\cup D_{a,\delta}\cup D_{b,\delta}$ lies within
$C_0\delta$ of $\Sigma$. If one of the endpoint-disk intersections is
nonempty, there are points $p,q\in\Sigma$ such that
\[
|h-(p-q)|
\leq
2C_0\delta.
\]
Thus
\[
|h|
\leq
\operatorname{diam}(\Sigma)+2C_0\delta.
\]
After decreasing $\delta_0$ so that $\delta_0\leq1$, this implies
$\delta+|h|\leq C_\Sigma$. Hence
\[
\delta^2
\leq
C_\Sigma\frac{\delta^2}{\delta+|h|}.
\]
There are only finitely many endpoint-disk intersections, so their total
contribution satisfies the required bound. Together with the estimate for the
vertical neighborhoods, this proves \eqref{eq:curved-graph-overlap}.
\end{proof}

The next elementary volume estimate allows us to pass from a curved graph to
a positive-length subset of it.

\begin{lemma}\label{lem:trace-tube-volume}
Let $g$ be Lipschitz on $[a,b]$, let $S\subset[a,b]$ be compact with
$|S|>0$, and put
\[
A
=
\{(x,g(x)):x\in S\}.
\]
Then there are constants $c,C>0$ such that
\[
c\delta
\leq
|A^\delta|
\leq
C\delta
\]
for every $0<\delta\leq1$.
\end{lemma}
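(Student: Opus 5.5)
The upper bound follows by comparison with the full graph. Let $L$ be the Lipschitz constant of $g$ and $\Gamma_g=\{(x,g(x)):x\in[a,b]\}$. Since $A\subset\Gamma_g$, we have $A^\delta\subset\Gamma_g^\delta$. The argument at the start of the proof of Theorem \ref{thm:curved-graph-overlap} used only the Lipschitz property of $g$, not its curvature. It therefore shows that $\Gamma_g^\delta$ is contained in the vertical neighborhood
\[
V_\delta=\{(x,y):a\le x\le b,\ |y-g(x)|\le C_0\delta\}
\]
together with two disks of radius $C_0\delta$, where $C_0$ depends only on $L$. Hence
\[
|A^\delta|\le 2C_0\delta(b-a)+2\pi C_0^2\delta^2\le C\delta
\qquad\text{for }0<\delta\le1.
\]

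For the lower bound, the plan is to exhibit an explicit subset of $A^\delta$ of area at least $c\delta$. For each $x\in S$, the vertical segment $\{x\}\times[g(x)-\delta,g(x)+\delta]$ lies in $A^\delta$, because every point on it is within distance $\delta$ of $(x,g(x))\in A$. The union of these segments over $x\in S$ is a measurable set: it is compact, since $S$ is compact and $g$ is continuous. Its vertical sections over $x\in S$ have length $2\delta$, so Fubini's theorem gives
\[
|A^\delta|\ge 2\delta|S|.
\]
One can take $c=2|S|>0$, and this bound holds for every $\delta>0$.

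There is no real obstacle here. The only point to check is that the containment from the proof of Theorem \ref{thm:curved-graph-overlap} is quoted correctly, namely that it uses only $\|g'\|_\infty\le L$ (or the Lipschitz constant in place of $\|g'\|_\infty$) and nothing about $g''$. Alternatively, the upper bound can be proved directly. Cover $[a,b]$ by $O(1/\delta)$ intervals of length $\delta$. The part of $\Gamma_g$ above each such interval lies in a ball of radius $(1+L)\delta$, so $\Gamma_g^\delta$ is covered by $O(1/\delta)$ balls of radius $(2+L)\delta$. This gives total area $O(\delta)$.
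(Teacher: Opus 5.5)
Your proof is correct and follows essentially the same route as the paper. The paper also bounds $A^\delta$ above by the $\delta$-neighborhood of the full graph, which lies inside the vertical $(1+L)\delta$-neighborhood plus two endpoint disks. It gets the lower bound $|A^\delta|\geq 2|S|\delta$ in the same way, by Fubini over the vertical segments of length $2\delta$ above points of $S$.
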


\begin{proof}
Let $L$ be a Lipschitz constant for $g$. If a point $(x,y)$ lies within
$\delta$ of the full graph and $x\in[a,b]$, then
\[
|y-g(x)|
\leq
(1+L)\delta.
\]
If $x$ lies outside $[a,b]$, the point lies in a $C\delta$-disk about one of
the two endpoints of the graph. Thus the $\delta$-neighborhood of the full
graph is contained in
\[
\{(x,y):a\leq x\leq b,\ |y-g(x)|\leq(1+L)\delta\}
\]
together with two disks of radius $C\delta$. Its area is at most
\[
2(1+L)(b-a)\delta+2\pi C^2\delta^2
\leq
C'\delta
\]
for $0<\delta\leq1$. Since $A$ is contained in the full graph, this proves
the upper bound.

For the lower bound, if $x\in S$, then the vertical section of $A^\delta$
above $x$ contains the interval
\[
[g(x)-\delta,g(x)+\delta].
\]
Fubini's theorem therefore gives
\[
|A^\delta|
\geq
2|S|\delta.
\]
\end{proof}

\begin{theorem}[Positive curved traces]\label{thm:positive-curved-trace}
Let $\Gamma\subset\mathbb R^2$ be a compact rectifiable curve. Suppose that
there are a compact $C^2$ graph
\[
\Sigma=\{(x,g(x)):x\in[a,b]\},
\]
a compact set $S\subset[a,b]$ of positive Lebesgue measure, and
\begin{equation}\label{eq:trace-inclusion}
A
=
\{(x,g(x)):x\in S\}
\subset
\Gamma
\end{equation}
such that $|g''|\geq c_0>0$ on $[a,b]$. If $E\subset\mathbb R^2$ is compact
and $\dim_{\mathrm H}(E)>1$, then
\[
|E+\Gamma|>0.
\]
More quantitatively, if $I_1(\mu)<\infty$ for a nonzero finite positive
measure $\mu$ supported on $E$, then
\begin{equation}\label{eq:trace-quantitative}
|E+\Gamma|
\geq
c
\frac{\mu(E)^2}{I_1(\mu)}.
\end{equation}
\end{theorem}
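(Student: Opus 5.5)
The plan is to apply the Overlap-to-coverage principle (Theorem \ref{thm:overlap-coverage}) to the compact set $A$ from \eqref{eq:trace-inclusion}. Once its two hypotheses are verified, the inclusion $A\subset\Gamma$ gives $E+A\subset E+\Gamma$, and both conclusions transfer directly to $\Gamma$. This yields the dimension statement and the quantitative bound \eqref{eq:trace-quantitative}.

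First I record that $A$ is compact, being the image of the compact set $S$ under the continuous map $x\mapsto(x,g(x))$. The tube-volume hypothesis \eqref{eq:tube-volume-hypothesis} follows from Lemma \ref{lem:trace-tube-volume}. Indeed, $g\in C^2([a,b])$ is Lipschitz, and $|S|>0$, so $|A^\delta|\geq 2|S|\delta$ for $0<\delta\leq1$.

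Second, I verify the overlap hypothesis \eqref{eq:tube-overlap-hypothesis} by monotonicity. Since $A\subset\Sigma$, we have $A^\delta\subset\Sigma^\delta$ and $h+A^\delta\subset h+\Sigma^\delta$. Hence
\[
|A^\delta\cap(h+A^\delta)|\leq|\Sigma^\delta\cap(h+\Sigma^\delta)|\leq C\frac{\delta^2}{\delta+|h|}
\]
for all $h$ and all $0<\delta<\delta_0$, by Theorem \ref{thm:curved-graph-overlap}; that theorem applies because $|g''|\geq c_0$ on $[a,b]$. I take $\delta_0$ to be the minimum of the threshold in that theorem and $1$, so that both hypotheses hold on a common range.

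Finally, I invoke Theorem \ref{thm:overlap-coverage}. If $I_1(\mu)<\infty$, it gives $|E+A|\geq c'\mu(E)^2/I_1(\mu)$, and monotonicity of Lebesgue measure under $E+A\subset E+\Gamma$ gives \eqref{eq:trace-quantitative}. Here $E+\Gamma$ is compact, hence measurable. If $\dim_{\mathrm H}(E)>1$, the last assertion of Theorem \ref{thm:overlap-coverage} gives $|E+A|>0$, and so $|E+\Gamma|>0$. No step is genuinely hard. The only point requiring care is the overlap hypothesis, which must hold for the trace $A$ rather than the full graph $\Sigma$, and this is exactly the monotonicity observation above. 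The constant $c$ depends only on $|S|$, $c_0$, $\|g'\|_\infty$ and the diameter of $\Sigma$, through the constants in the two cited results.
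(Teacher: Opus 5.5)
Your proposal is correct and follows the paper's proof: the overlap bound for $A$ is inherited from $\Sigma$ via $A\subset\Sigma$ and Theorem~\ref{thm:curved-graph-overlap}, and the tube-volume bound comes from Lemma~\ref{lem:trace-tube-volume}. Theorem~\ref{thm:overlap-coverage} then gives both conclusions for $E+A$, and they transfer to $E+\Gamma$ through $E+A\subset E+\Gamma$.
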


\begin{proof}
Since $A\subset\Sigma$, Theorem \ref{thm:curved-graph-overlap} gives
\[
|A^\delta\cap(h+A^\delta)|
\leq
|\Sigma^\delta\cap(h+\Sigma^\delta)|
\leq
C\frac{\delta^2}{\delta+|h|}.
\]
Lemma \ref{lem:trace-tube-volume} gives $|A^\delta|\asymp\delta$. Theorem
\ref{thm:overlap-coverage} therefore gives $|E+A|>0$ whenever
$\dim_{\mathrm H}(E)>1$. If $\mu$ is a nonzero finite positive measure
supported on $E$ with $I_1(\mu)<\infty$, the quantitative part of the same
theorem gives
\[
|E+A|
\geq
c\frac{\mu(E)^2}{I_1(\mu)}.
\]
Finally,
\[
E+A
\subset
E+\Gamma.
\]
This proves both assertions.
\end{proof}

\begin{corollary}\label{cor:curved-trace-threshold}
Let $\Gamma$ be a nondegenerate compact graph having a positive curved trace.
Then
\[
T(\Gamma)=1.
\]
\end{corollary}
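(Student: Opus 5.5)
The corollary follows by sandwiching $T(\Gamma)$ between two bounds already available. Since $\Gamma$ is a nondegenerate compact graph, i.e.\ the graph of a continuous function over a compact interval of positive length, Proposition \ref{prop:graph-threshold-lower} gives $T(\Gamma)\geq1$. It therefore remains to show $T(\Gamma)\leq1$. By the definition \eqref{eq:intro-threshold} of $T$, this amounts to showing that $t=1$ belongs to the set over which the infimum is taken: every compact $E\subset\mathbb R^2$ with $\dim_{\mathrm H}(E)>1$ satisfies $|E+\Gamma|>0$.

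For the upper bound I would translate Definition \ref{def:intro-curved-trace} into the hypotheses of Theorem \ref{thm:positive-curved-trace}. The definition provides a compact $C^2$ graph $\Sigma=\{(x,g(x)):x\in I\}$ with $|g''|\geq c_0$ on $I$, together with a compact set $A\subset\Gamma\cap\Sigma$ with $\mathcal H^1(A)>0$. Let $S=\pi_1(A)$ be the horizontal projection. Then $S$ is compact, because $\pi_1$ is continuous, and $A=\{(x,g(x)):x\in S\}$ because $A\subset\Sigma$. Since $g$ is $C^1$ on the compact interval $I$, it is Lipschitz with some constant $L$. The map $x\mapsto(x,g(x))$ is therefore $\sqrt{1+L^2}$-Lipschitz, so $0<\mathcal H^1(A)\leq\sqrt{1+L^2}\,|S|$, which gives $|S|>0$. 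The inclusion \eqref{eq:trace-inclusion} holds, and Theorem \ref{thm:positive-curved-trace} yields $|E+\Gamma|>0$ whenever $\dim_{\mathrm H}(E)>1$. Equivalently, one may simply cite Theorem \ref{thm:intro-curved-trace}, which is the same statement.

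Combining the two bounds gives $T(\Gamma)=1$. The only step that needs any care is the translation from the positive $\mathcal H^1$-measure of $A$ to the positive Lebesgue measure of $S$, and the Lipschitz graph map handles it. No further obstacle is expected.
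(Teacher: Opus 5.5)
Your proposal is correct and follows the paper's proof essentially line for line. The lower bound comes from Proposition \ref{prop:graph-threshold-lower}; for the upper bound, you project the trace onto $S$, use the Lipschitz graph map to get $0<\mathcal H^1(A)\leq\sqrt{1+\|g'\|_\infty^2}\,|S|$, and then invoke Theorem \ref{thm:positive-curved-trace}.
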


\begin{proof}
Let $A\subset\Gamma\cap\Sigma$ be a positive curved trace as in Definition
\ref{def:intro-curved-trace}, and let $S$ be its projection onto the first
coordinate. The parametrization $x\mapsto(x,g(x))$ is bi-Lipschitz on its
compact parameter interval. Hence $S$ is compact, and
\[
\mathcal H^1(A)
\leq
\sqrt{1+\|g'\|_\infty^2}\,|S|.
\]
It follows that $|S|>0$, and
\[
A
=
\{(x,g(x)):x\in S\}.
\]
The upper bound follows from Theorem \ref{thm:positive-curved-trace}. The
lower bound is Proposition \ref{prop:graph-threshold-lower}.
\end{proof}

\begin{corollary}[Quantitative curve projections]
\label{cor:quantitative-curve-projections}
Let $\Gamma$ be a nondegenerate compact rectifiable graph having a positive
curved trace, and let $E\subset\mathbb R^2$ be compact. There are constants
$c>0$ and $0<\delta_0<\frac12$, depending only on the curved trace, such that
for every nonzero finite positive measure $\mu$ supported on $E$ and every
$0<\delta<\delta_0$,
\begin{equation}\label{eq:quantitative-projection-energy}
\operatorname{Fav}_\Gamma(E^\delta)
\geq
c
\frac{\mu(E)^2}{J_\delta(\mu)},
\end{equation}
where $J_\delta(\mu)$ is defined in
\eqref{eq:regularized-one-energy}.

In particular, suppose that $\mu$ is a Borel probability measure supported on
$E$ and satisfying
\begin{equation}\label{eq:frostman-projection-hypothesis}
\mu(B(x,r))
\leq
C_\mu r^s
\end{equation}
for every $x\in\mathbb R^2$ and $0<r\leq1$, where $0<s\leq1$. After decreasing
$c$ and $\delta_0$, if necessary, in a way depending only on $s$ and $C_\mu$,
one has
\begin{equation}\label{eq:quantitative-curve-projections}
\operatorname{Fav}_\Gamma(E^\delta)
\geq
c
\begin{cases}
\delta^{1-s},&0<s<1,\\[4pt]
\displaystyle\left(\log\frac{1}{\delta}\right)^{-1},&s=1,
\end{cases}
\end{equation}
for every $0<\delta<\delta_0$.
\end{corollary}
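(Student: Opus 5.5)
The plan has two parts: derive the energy bound \eqref{eq:quantitative-projection-energy} from the overlap-to-coverage principle, then specialize it to Frostman measures.

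\emph{Step 1: from curve projections to a sumset.} By \eqref{eq:intro-favard-sum-identity} applied to the compact set $E^\delta$,
\[
\operatorname{Fav}_\Gamma(E^\delta)=|E^\delta+\Gamma|.
\]
As in the proof of Corollary \ref{cor:curved-trace-threshold}, write the trace as $A=\{(x,g(x)):x\in S\}\subset\Gamma$ with $|S|>0$. Then
\[
E^\delta+\Gamma\supset E^\delta+A\supset E+A^\delta .
\]
The second inclusion holds because if $y\in A^\delta$, pick $a\in A$ with $|y-a|\le\delta$; then $x+y=(x+y-a)+a$ and $x+y-a\in E^\delta$.

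\emph{Step 2: the energy bound.} By Theorem \ref{thm:curved-graph-overlap} and Lemma \ref{lem:trace-tube-volume}, $A$ satisfies \eqref{eq:tube-volume-hypothesis} and \eqref{eq:tube-overlap-hypothesis}. The constants and $\delta_0$ depend only on $g$ and $S$, and I shrink $\delta_0$ below $\tfrac12$ and $1$. Estimate \eqref{eq:regularized-overlap-coverage} of Theorem \ref{thm:overlap-coverage} then gives
\[
|E+A^\delta|\ge c'\,\frac{\mu(E)^2}{J_\delta(\mu)},
\]
which is \eqref{eq:quantitative-projection-energy}.

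\emph{Step 3: bounding $J_\delta(\mu)$ for a Frostman probability measure.} Fix $x$ and split the $y$-integral into three regions.
\begin{itemize}
\item On $|x-y|<\delta$, the integrand is at most $\delta^{-1}$, so the contribution is at most $\delta^{-1}\mu(B(x,\delta))\le C_\mu\delta^{s-1}$.
\item On the dyadic annuli $2^{j}\delta\le|x-y|<2^{j+1}\delta$ with $2^{j}\delta\le1$, the contribution is at most
\[
(2^j\delta)^{-1}C_\mu(2^{j+1}\delta)^s\lesssim C_\mu(2^j\delta)^{s-1}.
\]
\item On $|x-y|\ge1$, the integrand is at most $1$, so the contribution is at most $1$.
\end{itemize}
Summing the annuli gives different behavior in the two cases. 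For $s<1$ the sum is geometric and dominated by its first term, giving $J_\delta(\mu)\lesssim_{s,C_\mu}\delta^{s-1}$. For $s=1$ each of the roughly $\log_2(1/\delta)$ annuli contributes $O(C_\mu)$, giving $J_\delta(\mu)\lesssim C_\mu\log(1/\delta)$; the additive constants are absorbed because $\delta<\tfrac12$. Integrating in $x$ against the probability measure $\mu$, with $\mu(E)=1$, and substituting into Step 2 yields \eqref{eq:quantitative-curve-projections} after adjusting $c$ and $\delta_0$ depending only on $s$ and $C_\mu$.

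\emph{Main obstacle.} The only delicate point is the inclusion $E+A^\delta\subset E^\delta+\Gamma$ in Step 1, which converts the thickened-tube coverage into the Favard length of the thickened set; the remaining steps are routine bookkeeping. Nondegeneracy of the graph is needed only so that the curve projections and Favard length are defined.
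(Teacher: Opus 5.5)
Your proof is correct and follows the paper's argument step for step. The paper also writes $\operatorname{Fav}_\Gamma(E^\delta)=|E^\delta+\Gamma|\geq|E^\delta+A|=|E+A^\delta|$ (it records the equality $E^\delta+A=E+A^\delta=(E+A)^\delta$, of which your one inclusion is all that is needed), applies \eqref{eq:regularized-overlap-coverage}, and bounds $J_\delta(\mu)$ by the same ball, dyadic annuli, and far-region splitting. One small bookkeeping fix: your last annulus can extend to radius $2^{N+1}\delta\in(1,2]$, where the Frostman bound is not assumed. Either truncate it at $|x-y|<1$, or bound it by $\mu(\mathbb R^2)=1$ together with $(2^N\delta)^{-1}<2$; the paper does the equivalent by stopping the annuli at radius $2^N\delta\leq1$ and bounding the region $|x-y|\geq2^N\delta>\frac12$ by $2$.
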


\begin{proof}
Let $A\subset\Gamma\cap\Sigma$ be a positive curved trace, and let $S$ be its
projection onto the first coordinate. As in the proof of Corollary
\ref{cor:curved-trace-threshold}, one has $|S|>0$ and
\[
A
=
\{(x,g(x)):x\in S\}.
\]
Theorem \ref{thm:curved-graph-overlap} and Lemma
\ref{lem:trace-tube-volume} therefore show that $A$ satisfies the hypotheses
of Theorem \ref{thm:overlap-coverage}. For every compact set
$X\subset\mathbb R^2$, one has
$X^\delta=X+\overline{B(0,\delta)}$. Consequently, both $E^\delta+A$ and
$E+A^\delta$ equal $(E+A)^\delta$. Since $A\subset\Gamma$, one has
\begin{align}
\operatorname{Fav}_\Gamma(E^\delta)
&=
|E^\delta+\Gamma|
\notag\\
&\geq
|E^\delta+A|
=
|E+A^\delta|.
\label{eq:favard-dominates-trace-neighborhood}
\end{align}
Combining \eqref{eq:favard-dominates-trace-neighborhood} with
\eqref{eq:regularized-overlap-coverage} proves
\eqref{eq:quantitative-projection-energy}.

Suppose now that $\mu$ is a probability measure satisfying
\eqref{eq:frostman-projection-hypothesis}. Fix $0<\delta<\frac12$, and choose
an integer $N\geq0$ such that
\[
2^N\delta
\leq
1
<
2^{N+1}\delta.
\]
For each $x\in\mathbb R^2$, split the integral in
\eqref{eq:regularized-one-energy} into the ball $B(x,\delta)$, the annuli
\[
2^j\delta
\leq
|x-y|
<
2^{j+1}\delta,
\qquad 0\leq j<N,
\]
and the remaining region $|x-y|\geq2^N\delta$. Since
$2^N\delta>\frac12$, the last region contributes at most $2$. The Frostman
bound \eqref{eq:frostman-projection-hypothesis} gives
\begin{align}
\int
\frac{d\mu(y)}{\delta+|x-y|}
&\leq
C
+
C C_\mu\delta^{s-1}
\left(
1+
\sum_{j=0}^{N-1}2^{j(s-1)}
\right).
\label{eq:regularized-energy-annuli}
\end{align}
If $0<s<1$, the sum in
\eqref{eq:regularized-energy-annuli} is bounded by a constant depending only
on $s$. Since $\delta^{s-1}\geq1$, integration in $x$ gives
\[
J_\delta(\mu)
\leq
C\delta^{s-1}.
\]
If $s=1$, then $N\leq C\log\frac{1}{\delta}$, and hence
\[
J_\delta(\mu)
\leq
C\log\frac{1}{\delta}.
\]
Combining these estimates with
\eqref{eq:quantitative-projection-energy} proves
\eqref{eq:quantitative-curve-projections}.
\end{proof}

\begin{remark}[Comparison with transversal projection theorems]
\label{rem:projection-comparison}
Under the stronger assumption that $\Gamma$ is piecewise $C^1$ with
piecewise bi-Lipschitz unit tangent, the estimates in Corollary
\ref{cor:quantitative-curve-projections} follow from the transversality theory
of Bongers and Taylor \cite[Theorem~1.7]{BongersTaylor}. The corollary extends
these bounds in a different direction. Only a positive-length trace is
required to lie on a uniformly curved $C^2$ graph; the trace may be nowhere
dense, and no global transversality condition is imposed on the ambient graph.
The result controls the average projection length
\eqref{eq:intro-favard-curve-length}. It does not give an exceptional-parameter
statement of two-projection type. Very recent work of {\L}aba, McDonald, and
Taylor develops a local comparison principle for smooth generalized projection
families and transfers upper bounds from classical Favard length to that setting
\cite{LabaMcDonaldTaylor}. That work also assumes regularity and transversality
of the full family and is complementary to the lower bounds obtained here from
a positive curved trace.
\end{remark}

\begin{remark}\label{rem:trace-hypotheses}
Several features of Corollary \ref{cor:curved-trace-threshold} are useful to
keep in mind. The curved portion of $\Gamma$ need not be an arc, and no
regularity of $\Gamma$ away from the trace is used. In fact, the proof only
needs a compact set
\[
A\subset\Gamma\cap\Sigma
\]
whose projection to the parameter interval has positive Lebesgue measure.
The ambient graph $\Sigma$ supplies the curvature needed for the overlap
estimate, while the positive measure of the projection supplies the lower
bound $|A^\delta|\gtrsim\delta$. Thus the argument is genuinely local in the
curve but measure-theoretic rather than topological: a nowhere-dense trace is
just as effective as an interval.

The positive-length hypothesis also explains the limitation of this
particular method. If the trace had zero length, the normalization in
\eqref{eq:Fdelta} could be much larger than $\delta^{-1}$, and the overlap
bound inherited from $\Sigma$ would no longer reduce the second moment to the
$1$-energy of $\mu$ without additional information on the size of
$A^\delta$. One could try to replace positive length by a quantitative
Minkowski estimate for a thinner trace, but the resulting kernel would change
and the dimension threshold for $E$ would change with it. We do not pursue
that direction here. The purpose of the positive curved trace is precisely to
isolate the geometric hypothesis which yields the sharp threshold one.
\end{remark}

\section{The absolutely continuous part of the curvature measure}
\label{sec:curvature-measure}

Let $\gamma:[a,b]\to\mathbb R$ be convex and Lipschitz. Its right derivative
is nondecreasing. The distributional derivative of this monotone slope is the
finite positive measure $D^2\gamma$. By the Lebesgue decomposition theorem,
we write
\begin{equation}\label{eq:curvature-decomposition}
D^2\gamma
=
k(x)\,dx+\kappa_s,
\end{equation}
where $k(x)\,dx$ is absolutely continuous and $\kappa_s$ is singular with
respect to Lebesgue measure. Thus $k\in L^1(a,b)$ is the density of the
absolutely continuous part, and $\kappa_s$ is the singular part. The density
$k$ is the almost-everywhere derivative of the monotone slope function. To
compare this measure with the intrinsic curvature measure, put
$p=\gamma'_+$ and $\theta=\arctan p$. Since $p$ is bounded, the derivative
of $\arctan$ is bounded above and below by positive constants on the range of
$p$. The mean value theorem therefore gives
\[
c\bigl(p(y)-p(x)\bigr)
\leq
\theta(y)-\theta(x)
\leq
C\bigl(p(y)-p(x)\bigr)
\]
whenever $a<x<y<b$. It follows first on intervals, and then on all Borel sets
by regularity, that $D^2\gamma=Dp$ is comparable to the turning-angle measure
$D\theta$ of the graph. In particular, the two measures have nonzero
absolutely continuous parts simultaneously. The following is the one-variable case of Alexandrov's theorem
\cite{Alexandrov}, whose $n$-dimensional form states that a convex function
has a second-order Taylor expansion almost everywhere. We include the short
proof.

\begin{lemma}[Second-order (Peano) differentiability]
\label{lem:convex-second-order}
For almost every $x_0\in(a,b)$, the function $\gamma$ is differentiable at
$x_0$ and
\begin{equation}\label{eq:convex-second-order}
\gamma(x_0+h)
=
\gamma(x_0)+\gamma'(x_0)h
+
\frac{1}{2}k(x_0)h^2
+
o(h^2)
\end{equation}
as $h\to0$.
\end{lemma}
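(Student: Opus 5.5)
The plan is to write $\gamma$ in terms of its right derivative $p=\gamma'_+$, which is nondecreasing and bounded. Since $\gamma$ is Lipschitz, it is absolutely continuous, so
\[
\gamma(x_0+h)-\gamma(x_0)=\int_{x_0}^{x_0+h}p(t)\,dt .
\]
The expansion \eqref{eq:convex-second-order} will follow once we know that, for almost every $x_0$, the function $p$ is continuous at $x_0$ and satisfies
\[
p(t)=p(x_0)+k(x_0)(t-x_0)+o(|t-x_0|)
\]
as $t\to x_0$. Indeed, integrating this from $x_0$ to $x_0+h$ gives $p(x_0)h+\tfrac12k(x_0)h^2+o(h^2)$. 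Continuity of $p$ at $x_0$ also means $\gamma'_-(x_0)=\gamma'_+(x_0)$, so $\gamma$ is differentiable there with $\gamma'(x_0)=p(x_0)$.

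To get the first-order expansion of $p$, I would use the identity $p(t)-p(x_0)=D^2\gamma((x_0,t])$ for $t>x_0$, with the analogous formula for $t<x_0$. Here $D^2\gamma=Dp$ is the Lebesgue–Stieltjes measure of $p$, and right-continuity of $p$ matches the half-open intervals. By the Lebesgue decomposition \eqref{eq:curvature-decomposition},
\[
D^2\gamma((x_0,t])=\int_{x_0}^t k+\kappa_s((x_0,t]).
\]
The Lebesgue differentiation theorem gives $\int_{x_0}^t k=k(x_0)(t-x_0)+o(|t-x_0|)$ at Lebesgue points of $k$, which is almost every $x_0$. For the singular part, I would invoke the standard fact that a singular positive Radon measure on $\mathbb R$ has zero density with respect to Lebesgue measure almost everywhere: $\kappa_s([x_0-r,x_0+r])/r\to0$ for a.e. $x_0$. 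This is the differentiation theorem for measures (Besicovitch, or Rudin's Real and Complex Analysis, Thm.~7.14 in one dimension), and it bounds $\kappa_s((x_0,t])$ by $o(|t-x_0|)$. Combining the two pieces gives the first-order expansion of $p$ at a.e. $x_0$. It also gives continuity of $p$ at such points, since atoms of $D^2\gamma$ have positive upper density and so are excluded.

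The only slightly delicate step is to make sure the exceptional sets are null and compatible. We take $x_0$ in the intersection of three full-measure sets: Lebesgue points of $k$, zero-density points of $\kappa_s$, and the interior $(a,b)$. The integration step is then routine. The error term is uniform because $|p(t)-p(x_0)-k(x_0)(t-x_0)|\le\varepsilon|t-x_0|$ for $|t-x_0|<\eta$ integrates to at most $\varepsilon h^2/2$. I expect no real obstacle beyond quoting the zero-density statement for the singular part correctly.
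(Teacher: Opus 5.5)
Your proposal is correct and follows the paper's route: write $\gamma(x_0+h)-\gamma(x_0)=\int_{x_0}^{x_0+h}p(t)\,dt$ with $p=\gamma'_+$, show that $p(t)=p(x_0)+k(x_0)(t-x_0)+o(|t-x_0|)$ at almost every $x_0$, and integrate. The only difference is in how the pieces are justified. You derive this first-order expansion from Lebesgue points of $k$ and the zero-density property of $\kappa_s$, whereas the paper quotes it as the differentiation theorem for monotone functions. You also obtain differentiability of $\gamma$ from continuity of $p$ at $x_0$, while the paper gets it by intersecting with the full-measure set where $\gamma'$ exists and equals $p$.
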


\begin{proof}
Let $p=\gamma'_+$ be the right derivative of $\gamma$. The function $p$ is
nondecreasing, it agrees with $\gamma'$ almost everywhere, and its
distributional derivative is $D^2\gamma$. By the differentiability theorem
for monotone functions, $p$ is differentiable almost everywhere. Moreover,
at almost every point of differentiability, $p'$ is the Radon--Nikodym
density of the absolutely continuous part of $Dp=D^2\gamma$. Intersecting
this full-measure set with the set on which $\gamma$ is differentiable and
$p=\gamma'$, we obtain, for almost every $x_0$,
\[
p(x_0)=\gamma'(x_0),
\qquad
p'(x_0)=k(x_0).
\]
Fix such a point. Since $\gamma$ is absolutely continuous and $p=\gamma'$
almost everywhere, for $h>0$ one has
\[
\gamma(x_0+h)-\gamma(x_0)
=
\int_0^h p(x_0+t)dt.
\]
The differentiability of $p$ at $x_0$ gives
\[
p(x_0+t)
=
p(x_0)+k(x_0)t+o(|t|).
\]
Integration proves \eqref{eq:convex-second-order} for $h>0$. The argument for
$h<0$ follows from
\[
\gamma(x_0+h)-\gamma(x_0)
=
-\int_h^0 p(x_0+t)dt
\]
and the same expansion of $p(x_0+t)$ as $t\to0$.
\end{proof}

We use the following one-dimensional Lusin approximation theorem.

\begin{theorem}[Goldstein--Haj{\l}asz]\label{thm:goldstein-hajlasz}
Let $f:(a,b)\to\mathbb R$ be convex. For every $\varepsilon>0$ there is a
convex function $g\in C^2(a,b)$ such that
\[
|\{x\in(a,b):f(x)\neq g(x)\}|<\varepsilon.
\]
\end{theorem}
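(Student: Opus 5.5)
The plan is the classical Lusin--Whitney scheme, done by hand in one variable so that convexity can be enforced on the gaps. The input is Lemma \ref{lem:convex-second-order}: for almost every $x_0$ the function $f$ has the second-order expansion with first derivative $p(x_0)=f'(x_0)$ and second derivative $k(x_0)\geq0$.

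\textbf{Step 1: a good compact set.} By Lusin's and Egorov's theorems, choose a compact set $K\subset(a,b)$ with $|(a,b)\setminus K|<\varepsilon$ such that:
\begin{itemize}
\item every point of $K$ is an Alexandrov point and a Lebesgue density point of $K$;
\item the restrictions $f|_K$, $p|_K$ and $k|_K$ are continuous;
\item the Taylor remainders are uniformly small on $K$: there is a modulus $\omega(r)\to0$ with
\[
|f(y)-f(x)-p(x)(y-x)-\tfrac12k(x)(y-x)^2|\leq\omega(|y-x|)\,|y-x|^2
\]
for $x,y\in K$.
\end{itemize}
For the last bullet I would apply Egorov to the functions $\sup_{0<|h|<1/n}|R(x,h)|/h^2$. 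The same bullet also yields the first-order Whitney compatibility $|p(y)-p(x)-k(x)(y-x)|\leq\omega'(|y-x|)|y-x|$ on $K$, since the expansion at two nearby points can be compared. After discarding a small neighbourhood of $a,b$, I would work on a compact subinterval and extend affinely-plus-quadratically near the ends.

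\textbf{Step 2: convex interpolation on the gaps.} The complement of $K$ in the convex hull of $K$ is a countable union of open gaps $(x,y)$ with $x,y\in K$. On each gap I would build a $C^2$ function $g$ that matches the $2$-jet $(f,p,k)$ at both ends and has $g''\geq0$. I would take
\[
g''=\ell+\lambda\psi_1+\mu\psi_2,
\]
where $\ell$ is the linear interpolation of $k(x)$ and $k(y)$, and $\psi_1,\psi_2$ are fixed smooth nonnegative bumps supported in the left and right halves of the gap, rescaled to the gap. The two scalars $\lambda,\mu$ are fixed by the two remaining linear conditions (final slope $p(y)$ and final value $f(y)$). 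The uniform Taylor estimates of Step 1 show that the required corrections are $o(1)$ relative to the gap scale, so $|\lambda|,|\mu|\leq\omega''(y-x)$. Convexity of $f$ gives $p(x)\leq s\leq p(y)$, where $s$ is the secant slope. If $s=p(x)$, then $f$ is affine on $[x,y]$; since $x$ is a differentiability point of $p$, this forces $k(x)=0$, and similarly $k(y)=0$, so I take $g=f$ there.

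\textbf{Step 3: gluing and regularity.} I define $g=f$ on $K$ and use the interpolants on the gaps. Then $g$ agrees with $f$ off a set of measure less than $\varepsilon$. To show $g\in C^2$, I check that $g'$ and $g''$ are continuous at accumulation points of $K$. This follows because on gaps shrinking to a point $x_0\in K$ we have $g''=\ell+O(\omega'')$ with $\ell\to k(x_0)$, together with the Whitney compatibility on $K$. This is the standard one-dimensional Whitney extension argument, and the density-point condition handles approach through $K$ itself.

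\textbf{Main obstacle.} The hard step is the sign condition $g''\geq0$ in Step 2. When $k(x)$ and $k(y)$ are both near $0$, the linear part $\ell$ gives no room, and a negative $\lambda$ or $\mu$ would destroy convexity. My first attempt would use the two constraints more cleverly: the slope and value conditions can be met with nonnegative bump weights exactly when the secant slope lies strictly between $p(x)$ and $p(y)$ in a quantitative sense. So I would choose the bump supports adaptively, placing mass near the left or right end according to where $s$ sits in $[p(x),p(y)]$, and scale $\ell$ down by a factor in $[0,1]$ to free room. If that fails for the smallest gaps, the fallback is to shrink $K$ further, at the cost of an extra $\varepsilon/2$, by removing points where $k$ is small but the nearby secant slopes are nearly extremal. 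Points with $k=0$ on a positive-measure set need separate treatment via the affine case above.
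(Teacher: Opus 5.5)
The paper does not prove this statement. It quotes Theorem~1.3 of Goldstein--Haj{\l}asz, so your argument has to stand on its own, and it has a genuine gap at exactly the step you flag. Steps~1 and~3 are the standard one-variable Whitney scheme, and Step~2 works on small gaps near a point with $k(x_0)>0$, where $\ell\geq k(x_0)/2$ absorbs the $o(1)$ corrections. What remains unproved is the hard case: small gaps accumulating at a point with $k(x_0)=0$. There you must produce a continuous $g''\geq0$ with prescribed mass $\Delta=p(y)-p(x)$, prescribed first moment, and $\sup g''\to0$, and the proposal gives no construction. The adaptive-bump paragraph is only a heuristic. The fallback of shrinking $K$ is not an argument, because deleting points creates new gaps with new secant data that nothing controls. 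Your remark that points with $k=0$ are handled by the affine case is also wrong. If $D^2f$ is singular (slope a Cantor function, the regime of Proposition~\ref{prop:singular-no-trace}), then $k=0$ almost everywhere, yet every gap meeting the support of $D^2f$ has $\Delta>0$. In that regime the "obstacle" is the whole problem, not an edge case.

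The jet data on $K$ alone cannot settle it. Write $d=y-x$ and let $s$ be the secant slope. If $0\leq g''\leq M$ on $[x,y]$ with $\int_x^y g''=\Delta$ and $\int_x^y(t-x)g''(t)\,dt=d(p(y)-s)$, then minimizing the first moment forces $p(y)-s\geq\Delta^2/(2dM)$, and symmetrically $s-p(x)\geq\Delta^2/(2dM)$. Consider $k(x)=k(y)=0$, $\Delta=\eta d$ and $p(y)-s=\eta^3d$ with $\eta\leq\omega(d)/2$. This is consistent with all your two-point conditions on $K$, yet it forces $\sup g''\geq(2\eta)^{-1}$.

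The missing idea is to use convexity of $f$ \emph{inside} the gap. Your Egorov step actually gives the uniform Taylor bound at $x\in K$ for every increment $h$, not only for $x+h\in K$. Compare forward and backward difference quotients at $x+h$ with step $\sqrt{\omega(2|h|)}\,|h|$; this yields $|p(x+h)-p(x)-k(x)h|\leq\omega_1(|h|)|h|$ uniformly on $K$, with $\omega_1\to0$. Hence
\[
p(y)-s=\frac1d\int_x^y\bigl(p(y)-p(t)\bigr)\,dt\geq\frac{\Delta^2}{2d\,\bigl(k(x)+\omega_1(d)\bigr)},
\]
and symmetrically $s-p(x)\geq\Delta^2/\bigl(2d\,(k(y)+\omega_1(d))\bigr)$. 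This is exactly what lets a tent of height $O(k(x)+\omega_1(d)+\omega_k(d))$ fit inside $[x,y]$ around the required center of mass; here $\omega_k$ is a modulus of continuity of $k|_K$. Add negligible endpoint pieces to match $k(x)$ and $k(y)$. Then use your $\ell$-plus-correction interpolant when $k(x)\geq C(\omega_1(d)+\omega_k(d))$, and the tent otherwise. Also use the tent on the finitely many large gaps, where $\lambda,\mu$ need not be small but $p(x)<s<p(y)$ holds whenever $\Delta>0$. This gives $g''\geq0$ everywhere and $|g''-k(x)|=O(\omega_1(d)+\omega_k(d))$ on small gaps, which is what Step~3 needs. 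Without this inequality the proposal does not establish that $g$ is convex.
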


This is Theorem 1.3 of \cite{GoldsteinHajlasz}. Only the stated Lusin
property is needed below.

\begin{theorem}[Positive absolutely continuous curvature]
\label{thm:ac-curvature}
Let $\Gamma$ be the graph of a convex Lipschitz function
$\gamma:[a,b]\to\mathbb R$. Suppose that the density in
\eqref{eq:curvature-decomposition} satisfies
\begin{equation}\label{eq:positive-curvature-density}
\left|\{x\in(a,b):k(x)>0\}\right|>0.
\end{equation}
Then
\[
T(\Gamma)=1.
\]
\end{theorem}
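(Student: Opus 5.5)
The plan is to produce a positive curved trace in the sense of Definition \ref{def:intro-curved-trace}. Once we have one, Corollary \ref{cor:curved-trace-threshold} gives $T(\Gamma)=1$. The lower bound $T(\Gamma)\geq1$ is already Proposition \ref{prop:graph-threshold-lower}, so only the upper bound needs work. We need a compact interval $J\subset(a,b)$, a $C^2$ function $g$ on $J$ with $g''\geq c_0>0$ on all of $J$, and a compact set $S\subset J$ of positive measure on which $g=\gamma$.

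The first step isolates where the curvature lives. Let $P=\{x\in(a,b):k(x)>0\}$, which has positive measure by \eqref{eq:positive-curvature-density}. Intersect $P$ with the full-measure set of Lemma \ref{lem:convex-second-order}, on which $p=\gamma'_+$ is differentiable with $p'=k$. Then choose $c_0>0$ so that
\[
P_0=\{x\in P:k(x)\geq 2c_0\}
\]
still has positive measure.

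The second step applies Theorem \ref{thm:goldstein-hajlasz} to $\gamma$ on $(a,b)$ with $\varepsilon<|P_0|/2$. This gives a convex $\tilde g\in C^2(a,b)$ agreeing with $\gamma$ off a set of measure less than $\varepsilon$. Hence $Q=P_0\cap\{\gamma=\tilde g\}$ has positive measure. Now take a Lebesgue density point $x_0$ of $Q$ that also lies in $Q$. At $x_0$ we claim
\[
\tilde g''(x_0)=k(x_0)\geq 2c_0.
\]
To see this, compare first and second differences. Since $\tilde g-\gamma$ vanishes on $Q$ and $x_0$ is a density point, we can pick points $x_0+h_j$ and $x_0-h'_j$ in $Q$ with $h_j,h'_j\to0$ and $h_j/h'_j\to1$, and even a third point in between if needed. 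Both $\gamma$ and $\tilde g$ then have Peano second-order expansions at $x_0$, for $\gamma$ by Lemma \ref{lem:convex-second-order} and for $\tilde g$ because it is $C^2$. Their difference has the form $\alpha h+\beta h^2+o(h^2)$, and it vanishes along these sequences on both sides at comparable scales. Taking the difference of the two expansions forces $\alpha=0$, and then forces $\beta=0$. Thus $\tilde g'(x_0)=\gamma'(x_0)$ and $\tilde g''(x_0)=k(x_0)$.

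This identification at a density point is the step I expect to be the main obstacle. Agreement on a merely positive-measure set does not by itself force agreement of second derivatives, so we need the two-sided density argument with comparable scales. That argument in turn needs the Peano expansions of both functions, which is exactly why Lemma \ref{lem:convex-second-order} was recorded.

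The third step uses continuity of $\tilde g''$ to choose a compact interval $J\ni x_0$ with $\tilde g''\geq c_0$ on $J$ and $|Q\cap J|>0$; the density-point property guarantees the second condition. By inner regularity, choose a compact $S\subset Q\cap J$ of positive measure. Put $g=\tilde g|_J$, $\Sigma=\{(x,g(x)):x\in J\}$ and
\[
A=\{(x,\gamma(x)):x\in S\}\subset\Gamma\cap\Sigma.
\]
The set $A$ is compact, and $\mathcal H^1(A)\geq|S|>0$. So $\Gamma$ has a positive curved trace. Finally, $\Gamma$ is a nondegenerate compact rectifiable graph because $\gamma$ is Lipschitz, so Corollary \ref{cor:curved-trace-threshold} yields $T(\Gamma)=1$.
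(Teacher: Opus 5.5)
Your proposal is correct and follows essentially the same route as the paper: pass to $\{k\geq c\}$, apply Goldstein--Haj{\l}asz, identify $g''(x_0)=k(x_0)$ at a density point of the agreement set by comparing the Peano expansions, and then use continuity of $g''$ and inner regularity to build the positive curved trace. You overstate the difficulty of the identification step, though: there is no need for two-sided approximation at comparable scales, since a single sequence $x_j\to x_0$ with $x_j\neq x_0$ in the agreement set suffices, because dividing $0=h_1u_j+\tfrac12h_2u_j^2+o(u_j^2)$ first by $u_j$ and then by $u_j^2$ gives $h_1=h_2=0$.
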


\begin{proof}
Since
\[
\{x\in(a,b):k(x)>0\}
=
\bigcup_{m=1}^\infty\{x\in(a,b):k(x)\geq m^{-1}\},
\]
there is $c_0>0$ such that
\[
S
=
\{x\in(a,b):k(x)\geq c_0\}
\]
has positive measure. Apply Theorem \ref{thm:goldstein-hajlasz} with
$\varepsilon<|S|/2$. There is a convex function $g\in C^2(a,b)$ such that
the equality set
\[
G
=
\{x\in(a,b):\gamma(x)=g(x)\}
\]
satisfies $|G\cap S|>0$.

Choose $x_0\in G\cap S$ which is a Lebesgue density point of $G\cap S$ and
at which the expansion \eqref{eq:convex-second-order} holds. Such points form
a full-measure subset of $G\cap S$. Since $g\in C^2$, the function
$h=\gamma-g$ has an expansion
\begin{equation}\label{eq:h-second-order}
h(x_0+u)
=
h(x_0)+h_1u+\frac{1}{2}h_2u^2+o(u^2),
\end{equation}
where
\[
h_1=\gamma'(x_0)-g'(x_0),
\qquad
h_2=k(x_0)-g''(x_0).
\]
The function $h$ vanishes on $G$, and $h(x_0)=0$. Choose
$x_j\in G\cap S$, $x_j\neq x_0$, with $x_j\to x_0$. Substituting
$u_j=x_j-x_0$ into \eqref{eq:h-second-order} gives
\[
0
=
h_1u_j+\frac{1}{2}h_2u_j^2+o(u_j^2).
\]
Dividing first by $u_j$ and letting $j\to\infty$ gives $h_1=0$.
Substituting this back into the preceding identity, dividing by $u_j^2$, and
letting $j\to\infty$ gives $h_2=0$. Thus
\[
h_1=0,
\qquad
h_2=0.
\]
It follows that
\[
g''(x_0)=k(x_0)\geq c_0.
\]

By continuity, there is a compact interval $I_0\subset(a,b)$ containing
$x_0$ in its interior such that
\[
g''(x)\geq\frac{c_0}{2}
\]
for every $x\in I_0$. Since $x_0$ is a density point of $G\cap S$,
$|G\cap S\cap I_0|>0$. By inner regularity, this set contains a compact
subset $S_0$ of positive measure. Put
\[
A_0
=
\{(x,g(x)):x\in S_0\}.
\]
The set $A_0$ is compact. Since the graphs of $\gamma$ and $g$ coincide
above $S_0$, one has $A_0\subset\Gamma$. The first-coordinate projection from
$A_0$ onto $S_0$ is $1$-Lipschitz and surjective, and hence
$\mathcal H^1(A_0)\geq |S_0|>0$. Thus $\Gamma$ has a positive curved trace.
Corollary
\ref{cor:curved-trace-threshold} proves the theorem.
\end{proof}

\begin{corollary}\label{cor:convex-body-ac-curvature}
Let $K\subset\mathbb R^2$ be a bounded convex body. Suppose that some graph
subarc of $\partial K$ has a curvature measure with a nonzero absolutely
continuous part. Then, for every compact $E\subset\mathbb R^2$,
\[
\dim_{\mathrm H}(E)>1
\quad\Longrightarrow\quad
|E+\partial K|>0.
\]
\end{corollary}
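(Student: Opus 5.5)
The plan is to reduce the corollary to Theorem \ref{thm:positive-curved-trace} by extracting a positive curved trace from the given graph subarc of $\partial K$. The inclusion $E+\Gamma\subset E+\partial K$ then finishes the argument. Let $\Gamma\subset\partial K$ be the graph subarc in the hypothesis. After a rotation we may assume it has the form $\{(x,\gamma(x)):x\in[a,b]\}$; rotation changes neither Hausdorff dimension nor Lebesgue measure, and $E+\partial K$ simply rotates when both $E$ and $K$ do. Because $K$ is convex, $\gamma$ is convex or concave on $[a,b]$. Replacing $K$ by its reflection if necessary, and applying the same reflection to $E$, we may take $\gamma$ convex.

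The first step is to arrange that $\gamma$ is Lipschitz, since Theorem \ref{thm:ac-curvature} is stated for convex Lipschitz functions. A convex function is Lipschitz on every compact subinterval of $(a,b)$. The curvature measure $D^2\gamma$ restricted to the open interval carries the same absolutely continuous part; endpoints have Lebesgue measure zero. So $\{k>0\}$ has positive measure, and we can choose $[a',b']\subset(a,b)$ with $|\{x\in(a',b'):k(x)>0\}|>0$.

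On $[a',b']$ the function $\gamma$ is convex and Lipschitz, and its second-derivative measure is the restriction of $D^2\gamma$, with density $k$ there. We then follow the proof of Theorem \ref{thm:ac-curvature} up to the step just before it invokes Corollary \ref{cor:curved-trace-threshold}. That argument produces a $C^2$ graph $\Sigma$ with $g''\ge c_0/2$ on a compact interval $I_0$, together with a compact $S_0\subset I_0$ of positive measure on which $\gamma=g$. Hence $A_0=\{(x,g(x)):x\in S_0\}\subset\Gamma\subset\partial K$. To apply Theorem \ref{thm:positive-curved-trace}, we take as the ambient rectifiable curve the graph $\Gamma'$ of $\gamma$ over $[a',b']$. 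It is compact and rectifiable, and it contains $A_0$.

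Theorem \ref{thm:positive-curved-trace} then gives $|E+\Gamma'|>0$ whenever $\dim_{\mathrm H}(E)>1$. Since $\Gamma'\subset\partial K$, we get $|E+\partial K|\ge|E+\Gamma'|>0$. The main point requiring care is the Lipschitz reduction near the endpoints of the subarc, where the slope of a boundary graph may blow up. The restriction to a compact interior subinterval handles this.
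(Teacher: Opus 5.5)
Your proposal is correct and follows essentially the same route as the paper. You apply a rigid motion (with a reflection if needed) to get a convex graph, restrict to a compact interior subinterval $[a',b']$ where $\gamma$ is Lipschitz and the absolutely continuous curvature is still nonzero, invoke the curved-trace mechanism, and finish with the inclusion into $E+\partial K$.

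The differences are minor. The paper reads ``curvature measure'' as the intrinsic turning-angle measure and transfers its nonzero absolutely continuous part to $k\,dx$ through the comparison $D^2\gamma\asymp D\theta$ from Section~\ref{sec:curvature-measure}; you skip that step by working with $D^2\gamma$ directly. The paper also just cites Theorem~\ref{thm:ac-curvature}, whereas you re-enter its proof to apply Theorem~\ref{thm:positive-curved-trace}.
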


\begin{proof}
Let $\Gamma\subset\partial K$ be the subarc in the hypothesis. After an
orthogonal transformation $Q$, which may include a reflection, and a
translation by $z_0$, we may write
\[
\Gamma'
=
Q\Gamma+z_0
=
\{(x,\gamma(x)):a\leq x\leq b\}
\]
with $\gamma$ convex. These transformations preserve arclength and the
turning-angle measure. Let $\nu$ denote the nonzero absolutely continuous
part of the turning-angle measure on $\Gamma'$. Since $\nu$ is absolutely
continuous with respect to arclength, it gives no mass to the two endpoints.
For all sufficiently large integers $m$, let
\[
\Gamma'_m
=
\{(x,\gamma(x)):a+m^{-1}\leq x\leq b-m^{-1}\}.
\]
These subarcs increase to the graph with its endpoints removed. Hence, for some sufficiently
large $m$, the restriction of $\nu$ to $\Gamma'_m$ is nonzero. Put
\[
[a',b']=[a+m^{-1},b-m^{-1}]
\]
and denote the corresponding subarc by $\Gamma''$.

A finite convex function is Lipschitz on every compact subinterval of the
interior of its domain, so $\gamma$ is Lipschitz on $[a',b']$. On this
interval the graph parametrization
\[
x\longmapsto(x,\gamma(x))
\]
is bi-Lipschitz, and arclength measure is comparable to Lebesgue measure in
$x$. The comparison between $D^2\gamma$ and the turning-angle measure proved
above therefore shows that the absolutely continuous part $k(x)\,dx$ of
$D^2\gamma$ is nonzero on $[a',b']$. Equivalently,
\[
\left|\{x\in[a',b']:k(x)>0\}\right|>0.
\]

Put $E'=QE$. Theorem \ref{thm:ac-curvature}, applied to $\Gamma''$, gives
\[
|E'+\Gamma''|>0.
\]
Since $\Gamma''\subset\Gamma'$ and
\[
E'+\Gamma'
=
Q(E+\Gamma)+z_0,
\]
orthogonal invariance and translation invariance of Lebesgue measure imply
$|E+\Gamma|>0$. Finally, $E+\Gamma\subset E+\partial K$.
\end{proof}

\begin{remark}\label{rem:curvature-regimes}
Theorem \ref{thm:ac-curvature} is deliberately local. A convex curve may
have flat pieces, corners, and singular curvature elsewhere. A positive
amount of absolutely continuous curvature on one measurable part is enough.
Atomic curvature is compatible with polygonal pieces and therefore does not
by itself force the threshold one. For example, every nondegenerate line
segment has threshold two. The upper bound
$T(\Gamma)\leq2$ is automatic, since no subset of $\mathbb R^2$ has
Hausdorff dimension greater than two. To prove the reverse inequality, fix
$t<2$. After applying a rigid motion, which preserves Hausdorff dimension and
planar measure, suppose that
\[
\Gamma=[0,1]\times\{0\}.
\]
Choose Cantor-type self-similar compact null sets $B,C\subset\mathbb R$
satisfying the open set condition and such that
\[
\dim_{\mathrm H}(B)+\dim_{\mathrm H}(C)>t.
\]
For such sets the Hausdorff and packing dimensions agree. More generally, if
$B$ and $C$ are compact and
$\dim_{\mathrm H}(C)=\dim_{\mathrm P}(C)$, then the standard product
inequalities
\[
\dim_{\mathrm H}(B)+\dim_{\mathrm H}(C)
\leq
\dim_{\mathrm H}(B\times C)
\leq
\dim_{\mathrm H}(B)+\dim_{\mathrm P}(C)
\]
show that
$\dim_{\mathrm H}(B\times C)=\dim_{\mathrm H}(B)+\dim_{\mathrm H}(C)$;
see \cite{Mattila}. Thus, for $E=B\times C$, one has
$\dim_{\mathrm H}(E)>t$, whereas
\[
E+\Gamma=(B+[0,1])\times C
\]
has planar measure zero by Fubini's theorem. Since $t<2$ was arbitrary,
$T(\Gamma)\geq2$, and hence $T(\Gamma)=2$. At the same time, Remark
\ref{rem:q-atomic} shows that a purely atomic curvature measure with dense
support may produce a strictly convex primitive rather than a polygonal
piece. Thus atomicity alone does not classify the threshold. Neither the
general dense-atomic case nor the purely singular continuous case is settled
by Theorem \ref{thm:ac-curvature}.
\end{remark}

\section{A convex curve with no Fourier decay on any subarc}
\label{sec:nonrajchman}

Let $F\subset[1,2]$ be a compact nowhere dense set of positive Lebesgue
measure which contains the endpoints $1$ and $2$. Write the complementary
intervals as
\[
[1,2]\setminus F
=
\bigcup_j(a_j,b_j).
\]
Define a nondecreasing function $p_0:[1,2]\to[1,2]$ by
\begin{equation}\label{eq:p0-definition}
p_0(x)
=
\begin{cases}
x,&x\in F,\\[3pt]
\frac{a_j+b_j}{2},&a_j<x<b_j.
\end{cases}
\end{equation}
Set
\begin{equation}\label{eq:gamma0-definition}
\gamma_0(x)
=
\frac12+
\int_1^x p_0(t)dt,
\qquad 1\leq x\leq2,
\end{equation}
and let $\Gamma_0$ be its graph.

\begin{lemma}\label{lem:gamma0-properties}
The function $\gamma_0$ is convex and Lipschitz. Moreover,
\begin{equation}\label{eq:gamma0-parabola}
\gamma_0(x)
=
\frac{x^2}{2}
\end{equation}
for every $x\in F$, and $\gamma_0$ is affine on every complementary interval
$(a_j,b_j)$.
\end{lemma}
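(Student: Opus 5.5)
The plan is to read off all three properties from the slope function $p_0$ and the defining integral \eqref{eq:gamma0-definition}. First, $p_0$ is nondecreasing. On $F$ it is the identity. On each gap $(a_j,b_j)$ it equals the midpoint $m_j=\frac{a_j+b_j}{2}$, and this value lies between $a_j$ and $b_j$. Since $a_j,b_j\in F$, any $x<y$ in $[1,2]$ fall into one of the following cases: both in $F$; one in $F$ and one in a gap; or both in gaps, either the same gap or different gaps. If $x\in F$ and $y\in(a_j,b_j)$, then $x\le a_j<m_j$. If $x$ lies in a gap before $y$'s gap, then $m_i<b_i\le a_j<m_j$. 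The remaining cases are similar. In every case $p_0(x)\le p_0(y)$.

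Since $p_0$ is bounded with values in $[1,2]$ and measurable (being monotone), $\gamma_0$ is Lipschitz with constant $2$. Its derivative is nondecreasing: concretely, for $x<y<z$ the difference quotients $\frac{1}{y-x}\int_x^y p_0$ and $\frac{1}{z-y}\int_y^z p_0$ are averages of a monotone function over consecutive intervals, so the first is at most $p_0(y^-)$, which is at most the second. This gives convexity. Affineness on $(a_j,b_j)$ is immediate, since there $\gamma_0'=p_0\equiv m_j$ is constant.

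The main step is \eqref{eq:gamma0-parabola}. Write $\gamma_0(x)-\frac{x^2}{2}=\int_1^x (p_0(t)-t)\,dt$, using $\frac12=\frac{1^2}{2}$. For $x\in F$, the set $[1,x]\setminus F$ is the union of those gaps $(a_j,b_j)$ with $b_j\le x$. This is the one point to check: no gap straddles $x$, precisely because $x\notin(a_j,b_j)$ for every $j$. The integrand vanishes on $F$, and on each such gap
\[
\int_{a_j}^{b_j}\Bigl(\frac{a_j+b_j}{2}-t\Bigr)dt=0 .
\]
By countable additivity, which applies because the integrand is bounded, the total integral is $0$. Hence $\gamma_0(x)=\frac{x^2}{2}$ on $F$.

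I do not expect a real obstacle. The only care needed is the bookkeeping that gaps lie entirely on one side of each point of $F$, together with the use of dominated convergence to sum over countably many gaps.
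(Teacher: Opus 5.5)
Your proposal is correct and follows essentially the same route as the paper. The paper likewise proves that $p_0$ is nondecreasing by comparing gap endpoints, gets the Lipschitz property from $1\le p_0\le2$, and obtains $\gamma_0=\frac{x^2}{2}$ on $F$ by writing $[1,x]\setminus F$ as a countable union of complete gaps on which $p_0-t$ integrates to zero. The one cosmetic difference is convexity: you check it directly through secant-slope averages, as the paper itself later does for $\gamma_1$, whereas the paper's proof of this lemma simply quotes that an absolutely continuous function with a.e.\ nondecreasing derivative is convex.
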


\begin{proof}
The function $p_0$ is nondecreasing. Indeed, on a complementary interval
$(a,b)$ its value lies between the endpoint values $p_0(a)=a$ and
$p_0(b)=b$. If $x<y$ do not belong to the same complementary interval, then
the closure of the gap containing $x$, if there is one, lies to the left of
the closure of the gap containing $y$, if there is one; the same endpoint
comparison gives $p_0(x)\leq p_0(y)$. Thus $p_0$ is nondecreasing. Since an
absolutely continuous function whose derivative is nondecreasing almost
everywhere is convex, $\gamma_0$ is convex. Since $1\leq p_0\leq2$, the
function $\gamma_0$ is Lipschitz. On a
complementary interval $(a,b)$,
\begin{align*}
\int_a^b p_0(t)dt
&=
(b-a)\frac{a+b}{2}
=
\int_a^b t\,dt.
\end{align*}
If $x\in F$, the open set $[1,x]\setminus F$ is a countable union of
complete complementary intervals. The displayed calculation shows that the
integral of $p_0-t$ over each such interval is zero, and $p_0(t)=t$ on $F$.
Countable additivity therefore gives
\[
\int_1^x(p_0(t)-t)\,dt=0,
\]
which proves \eqref{eq:gamma0-parabola}. The final assertion follows from
\eqref{eq:p0-definition}.
\end{proof}

The next identity is the elementary form of Wiener's theorem that we need.
We include the proof to keep the Fourier obstruction transparent.

\begin{lemma}[Wiener's mean-square identity]\label{lem:wiener}
Let $\nu$ be a finite positive Borel measure on $\mathbb R$, and use the
Fourier transform convention
\[
\widehat\nu(t)
=
\int e^{-2\pi itx}d\nu(x).
\]
Then
\begin{equation}\label{eq:wiener-identity}
\lim_{R\to\infty}
\frac1{2R}
\int_{-R}^R
|\widehat\nu(t)|^2dt
=
\sum_{x\in\mathbb R}\nu(\{x\})^2.
\end{equation}
\end{lemma}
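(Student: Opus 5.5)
The plan is to expand $|\widehat\nu|^2$ as a double integral and average in $t$. By Fubini, since $\nu$ is finite,
\[
\frac1{2R}\int_{-R}^R|\widehat\nu(t)|^2\,dt
=
\iint \frac1{2R}\int_{-R}^R e^{-2\pi it(x-y)}\,dt\,d\nu(x)\,d\nu(y)
=
\iint K_R(x-y)\,d\nu(x)\,d\nu(y),
\]
where $K_R(u)=\frac{\sin(2\pi Ru)}{2\pi Ru}$ for $u\neq0$ and $K_R(0)=1$. The justification is that the integrand $e^{-2\pi it(x-y)}$ is bounded by $1$ and $\nu\times\nu\times dt|_{[-R,R]}$ is a finite measure, so the triple integral may be taken in any order.

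Next I pass to the limit $R\to\infty$ under the $\nu\times\nu$ integral by dominated convergence. The kernel satisfies $|K_R(u)|\leq1$ for all $R,u$, and $\nu\times\nu$ is finite, so the constant $1$ dominates. Pointwise, $K_R(u)\to0$ for $u\neq0$, since $|K_R(u)|\leq(2\pi R|u|)^{-1}$, while $K_R(0)=1$. Hence the limit equals
\[
(\nu\times\nu)(\{(x,y):x=y\}).
\]

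Finally I compute the measure of the diagonal. By Fubini it is $\int\nu(\{x\})\,d\nu(x)$. Because $\nu$ is finite, the set $D$ of atoms is countable, and $\nu(\{x\})=0$ off $D$. So the integral reduces to $\sum_{x\in D}\nu(\{x\})\cdot\nu(\{x\})=\sum_x\nu(\{x\})^2$, which proves \eqref{eq:wiener-identity}. No step is a serious obstacle. The only points needing care are the justification of Fubini and of dominated convergence, both immediate from finiteness of $\nu$, and the bookkeeping of the atoms in the diagonal computation.
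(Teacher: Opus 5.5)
Your proposal is correct and follows essentially the same route as the paper: Fubini to reduce to the kernel $\frac{\sin(2\pi R(x-y))}{2\pi R(x-y)}$, dominated convergence with the bound $1$, and identification of the limit as $(\nu\times\nu)$ of the diagonal. Your extra Fubini step computing the diagonal mass as $\int\nu(\{x\})\,d\nu(x)=\sum_x\nu(\{x\})^2$ simply spells out what the paper states directly.
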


\begin{proof}
Fubini's theorem gives
\begin{align*}
\frac1{2R}
\int_{-R}^R
|\widehat\nu(t)|^2dt
&=
\iint
\frac{\sin(2\pi R(x-y))}{2\pi R(x-y)}
d\nu(x)d\nu(y),
\end{align*}
where, at $x=y$, the quotient denotes the value of
\[
\frac{1}{2R}\int_{-R}^R e^{-2\pi it(x-y)}\,dt,
\]
namely $1$. The kernel is bounded by $1$ and converges pointwise to the
indicator of the diagonal. Dominated
convergence gives
\[
(\nu\times\nu)(\{(x,x):x\in\mathbb R\})
=
\sum_x\nu(\{x\})^2.
\]
The sum is well defined because a finite measure has at most countably many
atoms.
\end{proof}

\begin{theorem}\label{thm:gamma0-nonrajchman}
The graph $\Gamma_0$ satisfies
\[
T(\Gamma_0)=1.
\]
For every nondegenerate interval $I\subset[1,2]$, let
\[
\Gamma_{0,I}
=
\{(x,\gamma_0(x)):x\in I\}
\]
and
\[
\sigma_I
=
\mathcal H^1|_{\Gamma_{0,I}}.
\]
Then there is a unit vector $\omega_I$ such that
\begin{equation}\label{eq:nonrajchman-conclusion}
\widehat\sigma_I(R\omega_I)
\not\longrightarrow0
\qquad\text{as }R\to\infty.
\end{equation}
\end{theorem}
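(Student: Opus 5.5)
The proof has two parts: the threshold identity $T(\Gamma_0)=1$, and the failure of the Rajchman property on every subarc.

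For the threshold, I will verify that $\Gamma_0$ has a positive curved trace and then apply Corollary \ref{cor:curved-trace-threshold}. Take $\Sigma=\{(x,x^2/2):x\in[1,2]\}$, so that $g''\equiv1$. Take $A=\{(x,x^2/2):x\in F\}$, which is compact because $F$ is compact. By Lemma \ref{lem:gamma0-properties}, $A\subset\Gamma_0\cap\Sigma$. Since first-coordinate projection is $1$-Lipschitz, $\mathcal H^1(A)\geq|F|>0$. The graph $\Gamma_0$ is nondegenerate and rectifiable because $\gamma_0$ is Lipschitz, so Corollary \ref{cor:curved-trace-threshold} gives $T(\Gamma_0)=1$.

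For the Fourier statement, fix a nondegenerate interval $I\subset[1,2]$. Because $F$ is nowhere dense, the interior of $I$ meets some complementary interval $(a_j,b_j)$. Hence there is a nondegenerate interval $J\subset I\cap(a_j,b_j)$ on which $\gamma_0(x)=m x+\beta$, with slope $m=(a_j+b_j)/2$. The set $\Gamma_{0,J}$ is therefore a line segment of positive length $\ell=|J|\sqrt{1+m^2}$. Let $\omega_I=(-m,1)/\sqrt{1+m^2}$ be its unit normal. Consider the pushforward $\nu=\pi_*\sigma_I$ under $\pi(z)=z\cdot\omega_I$. Then $\widehat\sigma_I(R\omega_I)=\widehat\nu(R)$ for $R\in\mathbb R$. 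Every point of the segment projects to the single value $\beta/\sqrt{1+m^2}$, so $\nu$ has an atom of mass at least $\ell>0$.

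By Lemma \ref{lem:wiener}, the averages $\frac1{2R}\int_{-R}^R|\widehat\nu(t)|^2dt$ converge to a limit that is at least $\ell^2>0$. The function $|\widehat\nu|$ is even, since $\nu$ is a positive measure and so $\widehat\nu(-t)=\overline{\widehat\nu(t)}$. If $\widehat\nu(R)\to0$ as $R\to+\infty$, then $\widehat\nu(t)\to0$ as $|t|\to\infty$, and the Cesàro averages of $|\widehat\nu|^2$ would tend to $0$. This contradiction gives \eqref{eq:nonrajchman-conclusion}.

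The only step that needs some care is locating the segment inside $I$ (nowhere density of $F$) and identifying the pushforward atom. The rest is direct from the earlier results.
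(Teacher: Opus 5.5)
Your proposal is correct and follows the paper's proof essentially step for step. You use the parabolic trace over $F$ together with Corollary \ref{cor:curved-trace-threshold} for $T(\Gamma_0)=1$. For the Fourier part you take a line segment inside a gap of $F$ within $I$, push $\sigma_I$ forward to the normal direction to get an atom, and apply Wiener's identity with the Ces\`aro contradiction via $\widehat\nu(-t)=\overline{\widehat\nu(t)}$.
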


\begin{proof}
By Lemma \ref{lem:gamma0-properties}, the graph $\Gamma_0$ agrees with the
parabola $y=\frac{x^2}{2}$ over the positive-measure set $F$. Thus it has a
positive curved trace. Corollary \ref{cor:curved-trace-threshold} gives
$T(\Gamma_0)=1$.

Fix a nondegenerate interval $I\subset[1,2]$. Since $F$ is closed and nowhere
dense, $\operatorname{int}(I)\setminus F$ is a nonempty open set. It therefore
contains a nondegenerate interval $J$ lying in a single component of
$[1,2]\setminus F$. By Lemma \ref{lem:gamma0-properties}, $\gamma_0$ is affine
on that component, so
\[
L
=
\{(x,\gamma_0(x)):x\in J\}
\subset
\Gamma_{0,I}
\]
is a nondegenerate line segment and $\mathcal H^1(L)>0$.

Let $\omega_I$ be a unit normal to $L$ and let
\[
\pi_I(z)=z\cdot\omega_I.
\]
Then $\pi_I\equiv c$ on $L$ for some $c\in\mathbb R$. Since $\gamma_0$ is
Lipschitz, $\sigma_I$ is a finite positive Borel measure. Denote its
pushforward under $\pi_I$ by
\[
\nu_I=(\pi_I)_\#\sigma_I.
\]
This measure has an atom at $c$, because
\[
\nu_I(\{c\})
=
\sigma_I\bigl(\pi_I^{-1}(\{c\})\bigr)
\geq
\sigma_I(L)
=
\mathcal H^1(L)
>
0.
\]
By the definition of the pushforward,
\begin{align*}
\widehat\sigma_I(R\omega_I)
&=
\int e^{-2\pi iRz\cdot\omega_I}\,d\sigma_I(z)
\\
&=
\int e^{-2\pi iRt}\,d\nu_I(t)
=
\widehat\nu_I(R).
\end{align*}
Lemma \ref{lem:wiener} therefore gives
\begin{equation}\label{eq:positive-mean-square}
\lim_{R\to\infty}
\frac{1}{2R}\int_{-R}^R|\widehat\nu_I(t)|^2\,dt
=
\sum_x\nu_I(\{x\})^2
\geq
\mathcal H^1(L)^2
>
0.
\end{equation}

Suppose that $\widehat\nu_I(R)\to0$ as $R\to+\infty$. Since $\nu_I$ is a
positive measure,
\[
\widehat\nu_I(-t)=\overline{\widehat\nu_I(t)},
\]
so $|\widehat\nu_I(t)|\to0$ as $|t|\to\infty$. Given $\varepsilon>0$, choose
$T>0$ such that $|\widehat\nu_I(t)|\leq\varepsilon$ for $|t|\geq T$. Since
$|\widehat\nu_I(t)|\leq\nu_I(\mathbb R)$ for all $t$, one has, for $R\geq T$,
\[
\frac{1}{2R}\int_{-R}^R|\widehat\nu_I(t)|^2\,dt
\leq
\frac{T\nu_I(\mathbb R)^2}{R}
+
\varepsilon^2.
\]
Letting $R\to\infty$ and then $\varepsilon\downarrow0$ contradicts
\eqref{eq:positive-mean-square}. This proves
\eqref{eq:nonrajchman-conclusion}.
\end{proof}

\begin{remark}
The conclusion is stronger than the absence of a polynomial decay estimate.
Arclength on every nontrivial subarc fails even the Rajchman property. The
positive-measure theorem is nevertheless optimal, since
$T(\Gamma_0)=1$.
\end{remark}

\section{A strictly convex curve with no polynomial decay}
\label{sec:strict-example}

We now remove the line segments from the preceding construction. The first
step is a strictly increasing function whose oscillation on dyadic intervals
is much smaller than the separation between neighboring slope ranges.

Put
\begin{equation}\label{eq:weight-sum}
S
=
\sum_{k=1}^\infty2^{-k^2},
\qquad
c_k
=
S^{-1}2^{-k^2},
\qquad
T_n
=
\sum_{k>n}c_k.
\end{equation}

\begin{lemma}\label{lem:superincreasing-weights}
For every $n\geq1$,
\begin{equation}\label{eq:tail-estimates}
T_n
\leq
2c_{n+1},
\qquad
c_n-T_n
\geq
\frac12c_n,
\qquad
\frac{T_n}{c_n}
\leq
2^{-2n}.
\end{equation}
\end{lemma}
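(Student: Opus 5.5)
The plan is to compare the tail $T_n=\sum_{k>n}c_k$ with a geometric series dominated by its first term $c_{n+1}$. This should give all three inequalities in \eqref{eq:tail-estimates}.

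\textbf{Step 1: the first inequality.} For $k\geq n+1$ we have
\[
\frac{c_{k+1}}{c_k}=2^{-(2k+1)}\leq 2^{-3}\leq\frac12 .
\]
Hence $c_{n+1+m}\leq 2^{-m}c_{n+1}$ for every $m\geq0$. Summing over $m$ gives
\[
T_n\leq c_{n+1}\sum_{m\ge0}2^{-m}=2c_{n+1},
\]
which is the first inequality.

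\textbf{Step 2: the third inequality.} Divide by $c_n$:
\[
\frac{c_{n+1}}{c_n}=2^{-(n+1)^2+n^2}=2^{-2n-1}.
\]
Step 1 then yields
\[
\frac{T_n}{c_n}\leq 2\cdot 2^{-2n-1}=2^{-2n}.
\]

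\textbf{Step 3: the second inequality.} Since $n\geq1$, Step 2 gives $T_n\leq 2^{-2n}c_n\leq\frac14c_n$. Therefore
\[
c_n-T_n\geq \tfrac34c_n\geq\tfrac12c_n .
\]

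The argument involves no real obstacle. The only point to check is the ratio bound $c_{k+1}/c_k\leq\frac12$ used in Step 1. It holds uniformly for $k\geq1$, and the normalization $S^{-1}$ cancels in every ratio.
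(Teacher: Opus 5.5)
Your proposal is correct and follows the paper's proof: you dominate the tail by a geometric series starting at $c_{n+1}$, use $c_{n+1}/c_n=2^{-(2n+1)}$ to get $T_n/c_n\leq 2^{-2n}$, and deduce $c_n-T_n\geq\frac34 c_n$. The only difference is cosmetic: you bound the ratio by $\frac12$, whereas the paper uses the sharper $2^{-(2n+3)}$; both give $T_n\leq 2c_{n+1}$.
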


\begin{proof}
For $k\geq n+1$,
\[
\frac{c_{k+1}}{c_k}
=
2^{-(2k+1)}
\leq
2^{-(2n+3)}.
\]
Since $2^{-(2n+3)}\leq\frac{1}{32}$, summing the resulting geometric series
gives
\[
T_n
\leq
\frac{c_{n+1}}{1-2^{-(2n+3)}}
\leq
2c_{n+1}.
\]
As $c_{n+1}/c_n=2^{-(2n+1)}$, it follows that
\[
\frac{T_n}{c_n}
\leq
2\frac{c_{n+1}}{c_n}
=
2^{-2n}.
\]
In particular, $T_n/c_n\leq\frac14$ for $n\geq1$, and hence
\[
c_n-T_n
\geq
\frac34c_n
\geq
\frac12c_n.
\]
\end{proof}

For $t\in[0,1)$, write
\[
t
=
\sum_{k=1}^\infty\omega_k(t)2^{-k},
\qquad
\omega_k(t)\in\{0,1\},
\]
using the binary expansion which is not eventually equal to $1$. Define
\begin{equation}\label{eq:q-definition}
q(t)
=
\sum_{k=1}^\infty c_k\omega_k(t),
\qquad 0\leq t<1,
\end{equation}
and set $q(1)=1$.

\begin{lemma}\label{lem:q-properties}
The function $q:[0,1]\to[0,1]$ is strictly increasing and
\begin{equation}\label{eq:q-mean}
\int_0^1 q(t)dt
=
\frac12.
\end{equation}
Let $D$ be a dyadic interval of level $n$, interpreted as half-open except at
the right endpoint of $[0,1]$. Then, up to endpoints,
\begin{equation}\label{eq:q-cylinder-oscillation}
\operatorname{diam}q(D)
\leq
T_n
\end{equation}
and, except for the dyadic endpoints,
\begin{equation}\label{eq:q-cylinder-separation}
|q(s)-q(t)|
\geq
\frac{1}{2}c_n
\end{equation}
whenever $s\in D$ and $t\in[0,1]\setminus D$.
\end{lemma}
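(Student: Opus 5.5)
Everything reduces to comparing binary digits, so the plan is to argue digit by digit and use Lemma \ref{lem:superincreasing-weights} to control the tails. First, $q$ maps $[0,1)$ into $[0,1]$ because $\sum_k c_k=1$. Since $c_k>0$ and every digit lies in $\{0,1\}$, we also have $q(t)\le 1$, with equality forcing $\omega_k(t)=1$ for all $k$. That is excluded for $t<1$ by our choice of expansion, so $q(1)=1$ is the maximum and is attained only at $t=1$.

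For the mean \eqref{eq:q-mean}, integrate term by term; this is justified by monotone convergence, since all terms are nonnegative. Each digit function $\omega_k$ equals $1$ on exactly half of $[0,1)$ in Lebesgue measure, so $\int_0^1\omega_k=\frac12$. Hence $\int_0^1 q=\frac12\sum_kc_k=\frac12$.

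For the cylinder statements, fix a level-$n$ dyadic interval $D$. Every $t\in D$, taken with its non-terminating-in-$1$ expansion, shares the digits $\omega_1,\dots,\omega_n$ of $D$, so
\[
q(t)=\sum_{k\le n}c_k\omega_k(D)+\sum_{k>n}c_k\omega_k(t).
\]
The tail lies in $[0,T_n]$, which gives \eqref{eq:q-cylinder-oscillation}. At the right endpoint of $[0,1]$ we use $q(1)=1$ together with the fact that this top cylinder has all first $n$ digits equal to $1$, so $q(1)$ is the supremum of the same affine range; this is the ``up to endpoints'' clause.

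For the separation \eqref{eq:q-cylinder-separation}, take $s\in D$ and $t\notin D$, and let $m\le n$ be the first digit index at which $s$ and $t$ differ. Say $\omega_m(s)=1$ and $\omega_m(t)=0$. Then
\[
q(s)-q(t)\ge c_m-T_m\ge\tfrac12 c_m\ge\tfrac12 c_n,
\]
where the middle inequality is \eqref{eq:tail-estimates} and the last uses that $c_k$ is decreasing. The case $t=1$ is handled in the same way by treating $1$ as the all-ones sequence, which is where the exceptional dyadic endpoints enter.

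Strict monotonicity follows from the same first-differing-digit argument. If $s<t<1$, their admissible expansions first differ at some $m$ with $\omega_m(s)=0$ and $\omega_m(t)=1$, so $q(t)-q(s)\ge c_m-T_m>0$. If $t=1$, we already have $q(s)<1$ from the first step.

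The only delicate point is bookkeeping with expansions: the chosen expansions must be admissible, so that membership of $t$ in $D$ is equivalent to $t$ having $D$'s first $n$ digits, and the endpoint $1$ must be treated by hand. Neither step should be an obstacle.
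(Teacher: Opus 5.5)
Your proposal is correct and follows essentially the same route as the paper: the first-differing-digit argument with the tail bound $c_j-T_j\geq\frac12 c_j$ from Lemma \ref{lem:superincreasing-weights} gives both strict monotonicity and the separation estimate, fixing the first $n$ digits gives the oscillation bound $T_n$, and term-by-term integration of the digit functions, each of mean $\frac12$, gives $\int_0^1 q=\frac12$. Your endpoint bookkeeping is slightly more careful than the paper's. With the non-terminating-in-$1$ expansions and half-open cylinders, membership in $D$ is exactly determined by the first $n$ digits, and $t=1$ is handled as the all-ones sequence, so you get the separation without excluding any dyadic endpoints, whereas the paper simply excludes them.
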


\begin{proof}
Suppose first that $0\leq s<t<1$, and let $j$ be the first binary digit at
which their chosen expansions differ. Then $\omega_j(s)=0$ and
$\omega_j(t)=1$. Hence
\[
q(t)-q(s)
\geq
c_j-
\sum_{k>j}c_k
=
c_j-T_j
>
0
\]
by Lemma \ref{lem:superincreasing-weights}. If $s<1$, its chosen binary
expansion has at least one zero digit, and therefore $q(s)<\sum_k c_k=1=q(1)$.
Thus $q$ is strictly increasing on $[0,1]$. For each $k$, the set
\[
\{t\in[0,1):\omega_k(t)=1\}
\]
is the union of the $2^{k-1}$ dyadic intervals of level $k$ whose $k$-th
digit equals $1$. Each interval has length $2^{-k}$, so this set has measure
$\frac12$ and
\[
\int_0^1\omega_k(t)\,dt=\frac12.
\]
Tonelli's theorem now gives
\[
\int_0^1 q(t)dt
=
\frac12
\sum_{k=1}^\infty c_k
=
\frac12.
\]
On a level-$n$ dyadic interval, the first $n$ digits are fixed, so only the
tail contributes to the oscillation. This proves
\eqref{eq:q-cylinder-oscillation}. Suppose next that $s\in D$ and
$t\notin D$, and exclude the dyadic endpoints so that the first $n$ digits
identify the cylinder without ambiguity. Their binary expansions first
differ at some position $j\leq n$. The contribution of this digit has size
$c_j$, while the total contribution of all later digits is at most $T_j$.
Therefore
\[
|q(s)-q(t)|
\geq
c_j-T_j
\geq
\frac{1}{2}c_j
\geq
\frac{1}{2}c_n,
\]
which is \eqref{eq:q-cylinder-separation}. The excluded endpoints form a
null set and do not affect any later integral.
\end{proof}

\begin{remark}[Atomic curvature of the Cantor-quantile pieces]
\label{rem:q-atomic}
The function $q$ is strictly increasing, but its distributional derivative is
purely atomic. To see this, let $j2^{-n}\in(0,1)$ be a dyadic rational of exact
level $n$, so that $j$ is odd. With the binary convention used above, the jump
of $q$ at this point is
\[
q(j2^{-n})-q(j2^{-n}-)
=
c_n-T_n.
\]
There are $2^{n-1}$ dyadic rationals of exact level $n$. The total mass of all
these jumps is
\begin{align*}
\sum_{n=1}^\infty 2^{n-1}(c_n-T_n)
&=
\sum_{n=1}^\infty 2^{n-1}c_n
-
\sum_{n=1}^\infty 2^{n-1}\sum_{k>n}c_k
\\
&=
\sum_{k=1}^\infty
\left(2^{k-1}-\sum_{n=1}^{k-1}2^{n-1}\right)c_k
\\
&=
\sum_{k=1}^\infty c_k
=
1.
\end{align*}
This is the full increase $q(1)-q(0)$. Since the distributional derivative of
a monotone function is the sum of its jump atoms and a non-atomic positive
measure, it follows that
\[
Dq
=
\sum_{n=1}^\infty
\sum_{\substack{1\leq j<2^n\\ j\ \mathrm{odd}}}
(c_n-T_n)\,\delta_{j2^{-n}}.
\]
Thus $Dq$ is purely atomic and its atoms are dense in $[0,1]$. Nevertheless,
$q$ is strictly increasing, and hence its primitive is strictly convex. After
affine rescaling, the same description holds for the slope on every
complementary interval in \eqref{eq:p1-definition}. The full slope $p_1$ also
satisfies $p_1(x)=x$ on the positive-measure set $F$. At almost every density
point of $F$ at which the monotone function $p_1$ is differentiable, difference
quotients along $F$ show that $p_1'(x)=1$. Thus the curvature measure of the
full graph $\Gamma_1$ has an additional nonzero absolutely continuous part.
The point of the present remark is that the Cantor-quantile pieces show that
purely atomic curvature need not lead to polygonal behavior.
\end{remark}

Let $F\subset[1,2]$ be the fat Cantor set used in Section
\ref{sec:nonrajchman}. For every complementary interval $(a,b)$, put
$\ell=b-a$ and define
\begin{equation}\label{eq:p1-definition}
p_1(x)
=
\begin{cases}
x,&x\in F,\\[3pt]
a+\ell q\left(\frac{x-a}{\ell}\right),&a<x<b.
\end{cases}
\end{equation}
Set
\begin{equation}\label{eq:gamma1-definition}
\gamma_1(x)
=
\frac12+
\int_1^x p_1(t)dt,
\qquad 1\leq x\leq2,
\end{equation}
and let $\Gamma_1$ be its graph.

Figure \ref{fig:slope-functions} summarizes the relation among the slope
functions used in the two constructions. The drawings are schematic: the
gaps are enlarged, and the middle panel shows a finite-stage approximation to
$q$ so that its jump structure is visible.

\begin{figure}[H]
\centering
\begin{tikzpicture}[x=.78cm,y=.78cm,>=Stealth]
  \begin{scope}[shift={(-5.2,0)}]
    \draw[->,line width=.45pt] (-.15,-.15)--(3.55,-.15);
    \draw[->,line width=.45pt] (-.15,-.15)--(-.15,2.75);
    \draw[densely dashed,line width=.5pt] (0,0)--(3.15,2.52);
    \draw[line width=.85pt] (0,0)--(.55,.44);
    \draw[line width=.85pt] (.55,.68)--(1.10,.68);
    \draw[densely dotted,line width=.45pt] (.55,.44)--(.55,.68);
    \draw[densely dotted,line width=.45pt] (1.10,.68)--(1.10,.88);
    \draw[line width=.85pt] (1.10,.88)--(1.55,1.24);
    \draw[line width=.85pt] (1.55,1.58)--(2.25,1.58);
    \draw[densely dotted,line width=.45pt] (1.55,1.24)--(1.55,1.58);
    \draw[densely dotted,line width=.45pt] (2.25,1.58)--(2.25,1.80);
    \draw[line width=.85pt] (2.25,1.80)--(3.15,2.52);
    \node[font=\small] at (1.55,-.55) {$(a)$ $p_0$};
  \end{scope}

  \begin{scope}[shift={(-.95,0)}]
    \draw[->,line width=.45pt] (-.15,-.15)--(3.55,-.15);
    \draw[->,line width=.45pt] (-.15,-.15)--(-.15,2.75);
    \draw[line width=.85pt] (0,0)--(.40,0);
    \draw[densely dotted,line width=.45pt] (.40,0)--(.40,.26);
    \draw[line width=.85pt] (.40,.26)--(.78,.26);
    \draw[densely dotted,line width=.45pt] (.78,.26)--(.78,.43);
    \draw[line width=.85pt] (.78,.43)--(1.18,.43);
    \draw[densely dotted,line width=.45pt] (1.18,.43)--(1.18,1.62);
    \draw[line width=.85pt] (1.18,1.62)--(1.60,1.62);
    \draw[densely dotted,line width=.45pt] (1.60,1.62)--(1.60,1.81);
    \draw[line width=.85pt] (1.60,1.81)--(2.00,1.81);
    \draw[densely dotted,line width=.45pt] (2.00,1.81)--(2.00,2.06);
    \draw[line width=.85pt] (2.00,2.06)--(2.42,2.06);
    \draw[densely dotted,line width=.45pt] (2.42,2.06)--(2.42,2.28);
    \draw[line width=.85pt] (2.42,2.28)--(3.15,2.28);
    \draw[densely dotted,line width=.45pt] (3.15,2.28)--(3.15,2.52);
    \node[font=\small] at (1.55,-.55) {$(b)$ finite-stage $q$};
  \end{scope}

  \begin{scope}[shift={(3.3,0)}]
    \draw[->,line width=.45pt] (-.15,-.15)--(3.55,-.15);
    \draw[->,line width=.45pt] (-.15,-.15)--(-.15,2.75);
    \draw[densely dashed,line width=.5pt] (0,0)--(3.15,2.52);
    \draw[line width=.85pt] (0,0)--(.62,.50);
    \draw[line width=.85pt] (.62,.50)--(.90,.50);
    \draw[densely dotted,line width=.45pt] (.90,.50)--(.90,.72);
    \draw[line width=.85pt] (.90,.72)--(1.18,.72);
    \draw[densely dotted,line width=.45pt] (1.18,.72)--(1.18,1.43);
    \draw[line width=.85pt] (1.18,1.43)--(1.48,1.43);
    \draw[densely dotted,line width=.45pt] (1.48,1.43)--(1.48,1.62);
    \draw[line width=.85pt] (1.48,1.62)--(1.78,1.62);
    \draw[densely dotted,line width=.45pt] (1.78,1.62)--(1.78,1.82);
    \draw[line width=.85pt] (1.78,1.82)--(2.08,1.82);
    \draw[densely dotted,line width=.45pt] (2.08,1.82)--(2.08,2.00);
    \draw[line width=.85pt] (2.08,2.00)--(2.45,2.00);
    \draw[densely dotted,line width=.45pt] (2.45,2.00)--(2.45,2.14);
    \draw[line width=.85pt] (2.45,2.14)--(3.15,2.52);
    \node[font=\small] at (1.55,-.55) {$(c)$ $p_1$};
  \end{scope}
\end{tikzpicture}
\caption{Schematic slope functions. The dashed diagonal represents the
identity function. The function $p_0$ agrees with the identity on $F$ and is
constant on each complementary interval. The auxiliary function $q$ is a
strictly increasing pure-jump function; panel $(b)$ displays only finitely
many of its dyadic jumps. The function $p_1$ agrees with the identity on $F$
and inserts an affine copy of $q$ in every gap. Panels $(b)$ and $(c)$ use
finite-stage depictions to make the jumps visible; in the actual functions the
dyadic jumps are dense, so no horizontal interval remains in $q$ or in the
inserted pieces of $p_1$. The dotted vertical connectors indicate jumps and
are not part of the graphs.}
\label{fig:slope-functions}
\end{figure}

Figure \ref{fig:strict-multiscale-piece} shows the two features of the
construction that will be used below. A small dyadic interval $J_n$ inside a
gap has a very small slope oscillation, while its slope range is separated
from the slopes outside $J_n$ by a much larger quantity.

\begin{figure}[H]
\centering
\begin{tikzpicture}[x=1cm,y=1cm,>=Stealth]
  \begin{scope}[shift={(-3.75,0)}]
    \draw[->,line width=.45pt] (-.35,-1.05)--(4.85,-1.05);
    \draw[->,line width=.45pt] (-.35,-1.05)--(-.35,1.72);
    \draw[line width=.85pt]
      plot[samples=120,domain=0:4.5] (\x,{0.075*\x*\x-.82});
    \draw[line width=2.5pt]
      plot[samples=30,domain=2.05:2.92] (\x,{0.075*\x*\x-.82});
    \draw[densely dashed,line width=.55pt]
      (1.45,-.40)--(3.65,.59);
    \draw[->,line width=.65pt] (2.50,-.31)--++(-.46,.98)
      node[above left,font=\small] {$\omega_n$};
    \draw[<->,line width=.5pt] (2.05,-.84)--(2.92,-.84)
      node[midway,below=2pt,font=\scriptsize] {$J_n$};
    \node[font=\scriptsize,align=center] at (2.30,1.38)
      {the phase is almost constant\\on the highlighted piece};
  \end{scope}

  \begin{scope}[shift={(3.05,0)}]
    \draw[->,line width=.45pt] (-.10,-.05)--(4.75,-.05)
      node[right,font=\scriptsize] {slope};
    \draw[line width=3.0pt] (.15,-.05)--(1.10,-.05);
    \draw[line width=4.2pt] (2.20,-.05)--(2.48,-.05);
    \draw[line width=3.0pt] (3.55,-.05)--(4.50,-.05);
    \draw (2.34,-.13)--(2.34,.13);
    \node[font=\scriptsize] at (.62,.30) {left slopes};
    \node[font=\scriptsize] at (4.02,.30) {right slopes};
    \node[font=\scriptsize] at (2.34,.28) {$m_n$};
    \node[font=\small] at (2.34,-.38) {$K_n$};
    \draw[<->,line width=.5pt] (2.20,.66)--(2.48,.66)
      node[midway,above=2pt,font=\scriptsize] {$|K_n|\leq\ell T_n$};
    \draw[<->,line width=.5pt] (1.13,-.64)--(2.17,-.64)
      node[midway,below=2pt,font=\scriptsize] {$\geq\frac12\ell c_n$};
    \draw[<->,line width=.5pt] (2.51,-.64)--(3.52,-.64)
      node[midway,below=2pt,font=\scriptsize] {$\geq\frac12\ell c_n$};
  \end{scope}
\end{tikzpicture}
\caption{The two scales used in the Fourier argument. On the level-$n$
interval $J_n$, the essential slope range is contained in an interval $K_n$
of length at most $\ell T_n$. Every slope from the same complementary gap but
outside $J_n$ lies at distance at least $\frac12\ell c_n$ from $K_n$; slopes
outside the gap obey a comparable bound for all large $n$. Since
$T_n/c_n\to0$ rapidly, a frequency in the normal direction $\omega_n$ is
nearly stationary on $J_n$ and uniformly nonstationary on its complement.}
\label{fig:strict-multiscale-piece}
\end{figure}

\begin{theorem}\label{thm:gamma1-geometric-properties}
The function $\gamma_1$ is Lipschitz and strictly convex. It satisfies
\begin{equation}\label{eq:gamma1-parabola}
\gamma_1(x)
=
\frac{x^2}{2}
\end{equation}
for every $x\in F$. Consequently,
\[
T(\Gamma_1)=1.
\]
\end{theorem}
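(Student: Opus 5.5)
The plan follows Lemma \ref{lem:gamma0-properties} closely, with the Cantor-quantile function $q$ from Lemma \ref{lem:q-properties} in place of the constant midpoint slope.

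\textbf{Step 1: $p_1$ is strictly increasing with values in $[1,2]$.}
On a complementary interval $(a,b)$ of $F$, $p_1(x)=a+\ell q((x-a)/\ell)$. Since $q$ is strictly increasing from $q(0)=0$ toward $q(1)=1$, $p_1$ is strictly increasing on $(a,b)$ with values in $[a,b)$. On $F$ we have $p_1(x)=x$.

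Now take $x<y$ in $[1,2]$.
\begin{itemize}
\item If both lie in the same gap, strict monotonicity of $q$ gives $p_1(x)<p_1(y)$.
\item Otherwise, separate them through the closures of their gaps. Values of $p_1$ on a gap $(a,b)$ lie in $[a,b]$. The endpoints of gaps lie in $F$, where $p_1$ is the identity.
\item So if $x$ lies in a gap $(a,b)$ and $y\ge b$, then $p_1(x)<b\le p_1(y)$. The inequality $p_1(x)<b$ is strict because $q<1$ on $[0,1)$.
\item Similarly, if $x\in F$ and $y$ lies in a later gap $(a',b')$ with $x\le a'$, then $p_1(x)=x\le a'\le p_1(y)$. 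Equality would force $x=a'=y$-value, i.e.\ $y$ at the left endpoint $q$-value $0$, which happens only at $y=a'\notin(a',b')$. Hence the inequality is strict.
\item Finally, if $x,y\in F$ then $p_1(x)=x<y=p_1(y)$ trivially.
\end{itemize}
Thus $p_1$ is strictly increasing. It takes values in $[1,2]$, and it is Borel, being monotone.

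\textbf{Step 2: convexity and the Lipschitz bound.}
Since $p_1$ is bounded, $\gamma_1$ is Lipschitz with constant at most $2$. Moreover $\gamma_1$ is absolutely continuous with $\gamma_1'=p_1$ almost everywhere, and $p_1$ is strictly increasing. So for $x<y<z$ the difference quotient $\frac{1}{y-x}\int_x^y p_1$ is strictly less than $\frac{1}{z-y}\int_y^z p_1$, because $p_1<p_1(y)$ on $[x,y)$ and $p_1>p_1(y)$ on $(y,z]$. This gives strict convexity.

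\textbf{Step 3: agreement with the parabola on $F$.}
By \eqref{eq:q-mean}, on each gap
\[
\int_a^b p_1
=(b-a)a+\ell^2\int_0^1 q
=\ell a+\tfrac12\ell^2
=\int_a^b t\,dt .
\]
For $x\in F$, the set $[1,x]\setminus F$ is a countable disjoint union of complete gaps, and $p_1(t)=t$ on $F$. Countable additivity, exactly as in Lemma \ref{lem:gamma0-properties}, gives $\int_1^x(p_1(t)-t)\,dt=0$. Hence $\gamma_1(x)=\frac12+\frac{x^2-1}{2}=\frac{x^2}{2}$.

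\textbf{Step 4: the threshold.}
Let $g(x)=x^2/2$ on $[1,2]$, so $g''\equiv1$. The set $A=\{(x,g(x)):x\in F\}$ is compact and contained in $\Gamma_1$. Since $|F|>0$, we have $\mathcal H^1(A)\ge|F|>0$, so $\Gamma_1$ has a positive curved trace. Corollary \ref{cor:curved-trace-threshold} then gives $T(\Gamma_1)=1$.

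\textbf{Main obstacle.}
The only delicate point is the strict monotonicity across gap boundaries and at points of $F$ accumulated by gaps. Step 1 handles this with the endpoint comparison, using $q(0)=0$ and $q<1$ on $[0,1)$. The rest is routine.
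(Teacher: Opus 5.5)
Your proposal is correct and follows the paper's proof essentially step for step. Both arguments use the same four steps: strict monotonicity of $p_1$ via the gap-endpoint comparison (using $q(0)=0$ and $q<1$ on $[0,1)$), strict increase of successive secant slopes, the gap-by-gap identity $\int_a^b p_1=\int_a^b t\,dt$ from $\int_0^1 q=\frac12$ combined with countable additivity, and finally the positive curved trace on the parabola together with Corollary~\ref{cor:curved-trace-threshold}. Your explicit case analysis across gaps and your check that $\mathcal H^1(A)\ge|F|>0$ are only slightly more detailed than the paper's version.
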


\begin{proof}
The function $p_1$ is bounded between $1$ and $2$. We first check that it is
strictly increasing. It is strictly increasing inside each complementary
interval because $q$ is strictly increasing. On a gap $(a,b)$ its values lie
strictly between $a$ and $b$, while $p_1(a)=a$ and $p_1(b)=b$. If two points
belong to different gaps, or if one of them belongs to $F$, the order of the
corresponding values of $p_1$ follows from the order of the closures of the
gaps. Thus
\[
x<y
\quad\Longrightarrow\quad
p_1(x)<p_1(y).
\]

If $x_1<x_2<x_3$, every value of $p_1$ on $(x_1,x_2)$ is smaller than every
value on $(x_2,x_3)$. Hence the average of $p_1$ on the first interval is
strictly smaller than its average on the second interval. More explicitly,
strict monotonicity gives
\[
p_1(t)<p_1(x_2)
\quad\text{for }x_1<t<x_2,
\qquad
p_1(t)>p_1(x_2)
\quad\text{for }x_2<t<x_3.
\]
The two differences are positive measurable functions on intervals of
positive length, so
\[
\frac{1}{x_2-x_1}\int_{x_1}^{x_2}p_1(t)dt
<
p_1(x_2)
<
\frac{1}{x_3-x_2}\int_{x_2}^{x_3}p_1(t)dt.
\]
These two averages are the successive secant slopes of $\gamma_1$. The
strict increase of successive secant slopes is the strict convexity of
$\gamma_1$. The boundedness of $p_1$ gives the Lipschitz property.

On a complementary interval $(a,b)$, the change of variables
$t=a+\ell u$ and \eqref{eq:q-mean} give
\begin{align*}
\int_a^b p_1(t)dt
&=
a\ell+\ell^2\int_0^1 q(u)du
\notag\\
&=
a\ell+\frac{\ell^2}{2}
=
\int_a^b t\,dt.
\end{align*}
The set $[1,x]\setminus F$ is a countable union of complete complementary
intervals whenever $x\in F$. On each of these intervals the preceding
calculation shows that $p_1$ and the identity function have the same
integral, while $p_1(t)=t$ on $F$. Therefore
\eqref{eq:gamma1-parabola} holds for every $x\in F$. Hence $\Gamma_1$ has a
positive curved trace on the parabola. Corollary
\ref{cor:curved-trace-threshold} gives $T(\Gamma_1)=1$.
\end{proof}

The construction has been chosen so that two scales are separated very
strongly. On a level-$n$ dyadic interval the slope oscillates by at most
$\ell T_n$, so a frequency of size roughly
$(\ell^2 2^{-n}T_n)^{-1}$ sees that piece as almost stationary. At the same
time, slopes away from that interval are separated by order $\ell c_n$.
Because $T_n/c_n$ decays exponentially in $n$, the contribution of the
complement can be estimated by a first-derivative bound and is negligible
compared with the length of the almost-stationary piece. The
superexponential choice $c_n\asymp2^{-n^2}$ then makes the selected
frequencies grow so rapidly that the resulting Fourier lower bounds defeat
every fixed power of $|\xi|$.

We now prove the Fourier statement. We first record a standard
first-derivative estimate in the form suited to monotone slopes.

\begin{lemma}[First derivative estimate]\label{lem:first-derivative}
Let $I=[a,b]$, let $\phi:I\to\mathbb R$ be absolutely continuous, and suppose
that $\phi'$ agrees almost everywhere with a monotone function. Let
$A:I\to\mathbb C$ have bounded variation. Suppose that $\phi'$ has constant
sign almost everywhere and
\[
|\phi'(x)|\geq\lambda>0
\]
for almost every $x\in I$. Then, for every $R>0$,
\begin{equation}\label{eq:first-derivative}
\left|
\int_I e^{-2\pi iR\phi(x)}A(x)dx
\right|
\leq
\frac{C}{R\lambda}
\left(
\|A\|_\infty+\operatorname{Var}_I(A)
\right).
\end{equation}
\end{lemma}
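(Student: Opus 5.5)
The plan is the classical van der Corput integration by parts, carried out in Stieltjes form because $\phi'$ need only be monotone and $A$ only of bounded variation. Replacing $\phi$ by $-\phi$ (which conjugates the integral) we may assume $\phi'\geq\lambda$ almost everywhere. Since $\phi'$ agrees a.e.\ with a monotone function $m$, we have $m\geq\lambda$ off a null set, and monotonicity upgrades this to every point of the open interval. So we may work with $m$ throughout, with $\phi(x)=\phi(a)+\int_a^x m$. Then $\phi$ is strictly increasing and bi-Lipschitz onto its image on compact subintervals, and its inverse $\psi$ is absolutely continuous with $\psi'=1/m\circ\psi$ a.e.

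Change variables $y=\phi(x)$. The integral becomes
\[
\int_{\phi(a)}^{\phi(b)}e^{-2\pi iRy}B(y)\,dy,
\qquad
B(y)=\frac{A(\psi(y))}{m(\psi(y))}.
\]
We then integrate by parts against the exponential in the Riemann--Stieltjes sense, which is valid for $B$ of bounded variation. This gives the bound
\[
\frac{1}{2\pi R}\Bigl(2\|B\|_\infty+\operatorname{Var}(B)\Bigr).
\]
Composition with the increasing homeomorphism $\psi$ preserves supremum and total variation. Hence it remains to bound $\|A/m\|_\infty$ and $\operatorname{Var}_I(A/m)$.

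The first is at most $\|A\|_\infty/\lambda$. For the second, use the product rule for variation:
\[
\operatorname{Var}(A\cdot\tfrac1m)
\leq
\|A\|_\infty\operatorname{Var}(\tfrac1m)+\|\tfrac1m\|_\infty\operatorname{Var}(A).
\]
Since $1/m$ is monotone with values in $(0,1/\lambda]$, its variation is at most $1/\lambda$; this is where monotonicity of $\phi'$ is essential. Altogether,
\[
\operatorname{Var}(B)\leq\frac{\|A\|_\infty+\operatorname{Var}_I(A)}{\lambda},
\]
which yields \eqref{eq:first-derivative} with an absolute constant $C$.

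The main obstacle is the endpoint behaviour: $m$ may jump or be unbounded at $a,b$, and $A$ may have jumps. To handle this, we apply the argument on $[a+\varepsilon,b-\varepsilon]$ and let $\varepsilon\to0$ by dominated convergence. The bound on each subinterval is uniform, since the variation of a function on a subinterval is at most its variation on $I$.

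Alternatively, to avoid the change of variables, one can write $e^{-2\pi iR\phi}=\frac{d}{dx}\bigl(e^{-2\pi iR\phi}\bigr)/(-2\pi iR\,m)$. Integrating by parts against the BV function $A/m$ then gives the same estimate.
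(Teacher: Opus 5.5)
Your proof is correct, but it is organized differently from the paper's.

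The paper works in two stages.
\begin{itemize}
\item First it bounds the pure oscillatory integral $\int_J e^{-2\pi iR\phi(x)}\,dx$ by $2/(\pi R\lambda)$, uniformly over all subintervals $J\subset I$. This uses the change of variables $w=\phi(x)$ and only the fact that $\beta=1/(m\circ\psi)$ is monotone with values in $[0,\lambda^{-1}]$.
\item Then it handles the amplitude by a second Stieltjes integration by parts, $\int_I A\,dP=A(b)P(b)-\int_I P\,dA$, where $P(x)=\int_a^x e^{-2\pi iR\phi}$. This needs only $\|P\|_\infty$ together with the sup and the variation of $A$.
\end{itemize}

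You instead absorb the amplitude into the single BV function $A/m$ and integrate by parts once. You pay for this with the product inequality $\operatorname{Var}(A/m)\le\|A\|_\infty\operatorname{Var}(1/m)+\lambda^{-1}\operatorname{Var}(A)$, and monotonicity of $\phi'$ enters through $\operatorname{Var}(1/m)\le\lambda^{-1}$.

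Your route is shorter. Your second variant, which integrates by parts directly in $x$ against $d\bigl(e^{-2\pi iR\phi}\bigr)$, avoids the inverse $\psi$ and the change of variables entirely. It also avoids the a.e.\ formula $\psi'=1/(m\circ\psi)$, which your main route states without proof; the paper justifies it via Luzin's property $(N)$.

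The paper's splitting gives a reusable van der Corput-type bound that is uniform over subintervals and confines the use of monotonicity to the phase alone. The amplitude then enters only through Abel summation, so no product rule for variation is needed.

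Two small points:
\begin{itemize}
\item Your $\varepsilon$-truncation at the endpoints works. The paper's device of replacing $m$ by $\max\{m,\lambda\}$, a monotone change on a null set, removes the endpoint issue in one line.
\item Since $A$ is complex-valued, replacing $\phi$ by $-\phi$ does not literally conjugate the integral. You must also replace $A$ by $\overline{A}$, which has the same sup and variation, so nothing is lost.
\end{itemize}
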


\begin{proof}
The argument below applies verbatim with either sign in the exponential. After
replacing $\phi$ by $-\phi$ if necessary, we may therefore suppose that
$\phi'(x)\geq\lambda$ almost everywhere. Let $m$ be a monotone representative
of $\phi'$. The set on which $m<\lambda$ has measure zero. Replacing $m$ by
$\max\{m,\lambda\}$, which preserves monotonicity and changes $m$ only on
that null set, we may assume that $m(x)\geq\lambda$ for every $x\in I$.

For $x<y$, absolute continuity gives
\[
\phi(y)-\phi(x)
=
\int_x^y m(t)\,dt
\geq
\lambda(y-x).
\]
Thus $\phi$ is strictly increasing, and its inverse
\[
\psi=\phi^{-1}:\phi(I)\longrightarrow I
\]
is Lipschitz with constant at most $\lambda^{-1}$. In particular, $\psi$ is
absolutely continuous.

Fix a subinterval $J=[u,v]\subset I$. The change of variables $w=\phi(x)$
gives
\begin{equation}\label{eq:first-derivative-change-variable}
\int_J e^{-2\pi iR\phi(x)}\,dx
=
\int_{\phi(u)}^{\phi(v)}e^{-2\pi iRw}\psi'(w)\,dw.
\end{equation}
The exceptional set on which $\phi'$ either does not exist or differs from
$m$ has measure zero. Since an absolutely continuous function has Luzin's
property $(N)$, the image of this exceptional set under $\phi$ also has
measure zero. The usual
one-dimensional inverse-derivative formula therefore gives
\[
\psi'(w)
=
\frac{1}{m(\psi(w))}
\]
for almost every $w\in\phi(I)$. Define the pointwise representative
\[
\beta(w)
=
\frac{1}{m(\psi(w))},
\qquad w\in\phi(I).
\]
If $m$ is nondecreasing, then $\beta$ is nonincreasing; if $m$ is
nonincreasing, then $\beta$ is nondecreasing. In either case,
\[
0\leq\beta(w)\leq\lambda^{-1},
\qquad
\operatorname{Var}_{\phi(J)}(\beta)\leq\lambda^{-1}.
\]
We replace the almost-everywhere derivative $\psi'$ in
\eqref{eq:first-derivative-change-variable} by this monotone representative
before using Riemann--Stieltjes integration by parts.

Set
\[
H(y)
=
\int_{\phi(u)}^y e^{-2\pi iRw}\,dw.
\]
Then $H(\phi(u))=0$ and $\|H\|_\infty\leq(\pi R)^{-1}$. Hence
\begin{align*}
\left|
\int_J e^{-2\pi iR\phi(x)}\,dx
\right|
&=
\left|
\int_{\phi(u)}^{\phi(v)}\beta(w)\,dH(w)
\right|
\\
&\leq
\|H\|_\infty
\left(
\|\beta\|_\infty+
\operatorname{Var}_{\phi(J)}(\beta)
\right)
\\
&\leq
\frac{2}{\pi R\lambda}.
\end{align*}
This estimate is uniform over all subintervals $J\subset I$.

Now put
\[
P(x)
=
\int_a^x e^{-2\pi iR\phi(t)}\,dt.
\]
The preceding estimate, applied to $[a,x]$, gives
\[
\|P\|_\infty\leq\frac{2}{\pi R\lambda}.
\]
Moreover, $P(a)=0$ and
$P'(x)=e^{-2\pi iR\phi(x)}$ almost everywhere. Therefore
\begin{align*}
\int_I e^{-2\pi iR\phi(x)}A(x)\,dx
&=
\int_I A(x)\,dP(x)
\\
&=
A(b)P(b)-A(a)P(a)-\int_I P(x)\,dA(x)
\\
&=
A(b)P(b)-\int_I P(x)\,dA(x).
\end{align*}
Taking absolute values yields
\[
\left|
\int_I e^{-2\pi iR\phi(x)}A(x)\,dx
\right|
\leq
\frac{2}{\pi R\lambda}
\left(
\|A\|_\infty+
\operatorname{Var}_I(A)
\right),
\]
which proves \eqref{eq:first-derivative}.
\end{proof}

Recall that our Fourier transform convention is
\begin{equation}\label{eq:fourier-convention}
\widehat\sigma(\xi)
=
\int e^{-2\pi ix\cdot\xi}d\sigma(x).
\end{equation}

\begin{theorem}\label{thm:gamma1-no-polynomial-decay}
Let $I\subset[1,2]$ be a nondegenerate compact interval, let
\[
\Gamma_{1,I}
=
\{(x,\gamma_1(x)):x\in I\},
\qquad
\sigma_I
=
\mathcal H^1|_{\Gamma_{1,I}}.
\]
For every $\alpha>0$,
\begin{equation}\label{eq:no-polynomial-decay}
\limsup_{|\xi|\to\infty}
|\xi|^\alpha|\widehat\sigma_I(\xi)|
=
\infty.
\end{equation}
\end{theorem}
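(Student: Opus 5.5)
Inside $I$ we pick a deep dyadic piece of a gap, aim the frequency in its normal direction, and check that this piece contributes a nearly stationary term that beats everything else. Since $F$ is nowhere dense, $\operatorname{int}(I)$ contains a complementary interval piece; more precisely, some gap $(a,b)$ meets $\operatorname{int}(I)$. We fix a dyadic subinterval of $[0,1]$ whose affine image $D\subset(a,b)$ lies in $\operatorname{int}(I)$. For each $n$ larger than the level of $D$, let $J_n\subset D$ be the image of a level-$n$ dyadic interval; it has length $\ell 2^{-n}$. Let $K_n$ be the essential slope range of $p_1$ on $J_n$, which has length at most $\ell T_n$ by \eqref{eq:q-cylinder-oscillation}, and let $m_n$ be a point of $K_n$. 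Set
\[
\omega_n=\frac{(-m_n,1)}{\sqrt{1+m_n^2}}, \qquad \xi_n=R_n\omega_n .
\]
Writing $\sigma_I$ in the graph parametrization gives
\[
\widehat\sigma_I(\xi_n)=\int_I e^{-2\pi i R_n\phi_n(x)}A(x)\,dx,
\]
where
\[
\phi_n(x)=\frac{\gamma_1(x)-m_nx}{\sqrt{1+m_n^2}},\qquad A=\sqrt{1+p_1^2}.
\]
The amplitude $A$ is monotone and bounded, so $\|A\|_\infty+\operatorname{Var}_I(A)\lesssim1$. The phase has $\phi_n'$ proportional to $p_1-m_n$, which is monotone.

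\textbf{Stationary piece.} On $J_n$ we have $|\phi_n'|\le\ell T_n$, so $\phi_n$ varies by at most $\ell^2 2^{-n}T_n$ there. We choose
\[
R_n=\frac{\varepsilon}{\ell^2 2^{-n}T_n}
\]
with $\varepsilon$ small. Then the exponential stays within a small angle of a constant on $J_n$. Since $A\ge1$, the contribution of $J_n$ has modulus at least $\tfrac12|J_n|\asymp\ell 2^{-n}$.

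\textbf{Complement.} $I\setminus J_n$ splits into at most two intervals, one to the left of $J_n$ and one to the right. On each, $p_1-m_n$ has constant sign. Its modulus is at least $\tfrac12\ell c_n-\ell T_n\gtrsim\ell c_n$: inside the gap this follows from \eqref{eq:q-cylinder-separation}, and outside the gap $p_1$ lies outside $(a,b)$, so the distance is even larger than within the gap (the slopes in $J_n$ stay away from $a,b$ by $\gtrsim \ell c_n$ for the same reason). Lemma \ref{lem:first-derivative} then bounds each complementary integral by
\[
\frac{C}{R_n\ell c_n}=\frac{C\ell 2^{-n}T_n}{\varepsilon c_n}\le \frac{C}{\varepsilon}\,\ell\,2^{-n}\,2^{-2n},
\]
using \eqref{eq:tail-estimates}. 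This is negligible compared with $\ell2^{-n}$ for large $n$. Hence
\[
|\widehat\sigma_I(\xi_n)|\gtrsim \ell 2^{-n}.
\]

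\textbf{Growth.} We have $|\xi_n|=R_n\asymp 2^{n}2^{(n+1)^2}/\ell$, since $T_n\asymp c_{n+1}$. Therefore
\[
|\xi_n|^\alpha|\widehat\sigma_I(\xi_n)|\gtrsim 2^{\alpha n^2-n}\to\infty
\]
for every $\alpha>0$, while $|\xi_n|\to\infty$. This gives \eqref{eq:no-polynomial-decay}.

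The main obstacle is the careful handling of the complement estimate. First, we must verify that the separation constant persists across $F$ and other gaps, i.e.\ check the monotone ordering of $p_1$ globally. Second, Lemma \ref{lem:first-derivative} requires the constant sign of $\phi_n'$ on each side, which holds because $p_1$ is strictly increasing. A lesser point is that the separation in \eqref{eq:q-cylinder-separation} is $\tfrac12c_n$ rather than $c_n-T_n$ near neighboring cylinders, which is adequate here.
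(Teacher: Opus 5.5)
Your proposal is correct and is essentially the paper's argument: a level-$n$ dyadic piece $J_n$ of a gap inside $I$, the normal direction at a slope value in $K_n$, $R_n\asymp(\ell^2 2^{-n}T_n)^{-1}$, a stationary lower bound $\gtrsim \ell 2^{-n}$ on $J_n$, Lemma~\ref{lem:first-derivative} on the two complementary components with $|\phi_n'|\gtrsim \ell c_n$, and $T_n/c_n\le 2^{-2n}$, $T_n\asymp 2^{-(n+1)^2}$ for the growth. The differences are minor. The paper uses nested cylinders around a fixed non-dyadic $t_0$ and gets separation from the endpoint slopes $a,b$ from $m_n\to a+\ell q(t_0)$. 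You instead get it from the digits of a cylinder that is neither the first nor the last one; your requirement that the image of $D$ lie in $(a,b)$ guarantees this, because under the paper's convention the last dyadic interval is closed. Finally, your growth rate should read $R_n\asymp \ell^{-2}2^{n+(n+1)^2}$ rather than with $\ell^{-1}$, which is harmless.
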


\begin{proof}
Since $F$ is nowhere dense, there are a complementary interval $(a,b)$ of
$F$ and a nonempty open interval $U$ such that
\[
U\subset\operatorname{int}(I)\cap(a,b).
\]
Put $\ell=b-a$. Choose a non-dyadic point $t_0\in(0,1)$ such that
$a+\ell t_0\in U$. For all sufficiently large $n_0$, the closure of the
level-$n_0$ dyadic interval containing $t_0$ is carried into $U$ by the
affine map $t\mapsto a+\ell t$. For every $n\geq n_0$, let $D_n$ be the
level-$n$ half-open dyadic interval containing $t_0$ and put
\[
J_n
=
a+\ell D_n.
\]
The endpoints of $J_n$ play no role in the integrals below. We have
$J_n\subset I$ and
\begin{equation}\label{eq:Jn-length}
|J_n|
=
L_n
=
\ell2^{-n}.
\end{equation}

Let $K_n$ be the smallest closed interval containing the essential range of
$p_1$ on $J_n$. By Lemma \ref{lem:q-properties}, its length is at most
\begin{equation}\label{eq:slope-oscillation}
\Delta_n
=
\ell T_n.
\end{equation}
Let $m_n$ be the midpoint of $K_n$ and put
\begin{equation}\label{eq:omega-n}
\omega_n
=
\frac{(-m_n,1)}{\sqrt{1+m_n^2}}.
\end{equation}
For
\[
\Phi_n(x)
=
\omega_n\cdot(x,\gamma_1(x)),
\]
one has, at almost every differentiability point of $\gamma_1$,
\begin{equation}\label{eq:phase-derivative-formula}
\Phi_n'(x)
=
\frac{p_1(x)-m_n}{\sqrt{1+m_n^2}}.
\end{equation}
For almost every $x\in J_n$, the value $p_1(x)$ lies in $K_n$. Since $m_n$
is the midpoint of $K_n$ and $|K_n|\leq\Delta_n$, one has
\[
|p_1(x)-m_n|
\leq
\frac{\Delta_n}{2}.
\]
Since $\sqrt{1+m_n^2}\geq1$, it follows that
\begin{equation}\label{eq:phase-derivative-on-Jn}
|\Phi_n'(x)|
\leq
\frac{\Delta_n}{2}
\end{equation}
for almost every $x\in J_n$. Therefore
\begin{equation}\label{eq:phase-variation}
\operatorname{osc}_{J_n}\Phi_n
\leq
\frac{1}{2}L_n\Delta_n.
\end{equation}

Since $\gamma_1'=p_1$ almost everywhere, the graph parametrization with
arclength density gives
\begin{equation}\label{eq:strict-fourier-parametrization}
\widehat\sigma_I(R\omega_n)
=
\int_I e^{-2\pi iR\Phi_n(x)}
\sqrt{1+p_1(x)^2}\,dx
\end{equation}
for every $R>0$. Choose a fixed $\varepsilon_0$ with
$0<\varepsilon_0<\frac13$, and set
\begin{equation}\label{eq:Rn-definition}
R_n
=
\frac{\varepsilon_0}{L_n\Delta_n}.
\end{equation}
By \eqref{eq:phase-variation}, the arguments of the complex numbers
$e^{-2\pi iR_n\Phi_n(x)}$, $x\in J_n$, lie in an interval of length less
than $\frac{\pi}{3}$. After multiplication by a complex number of modulus
one, they therefore all have real part at least
$\cos(\frac{\pi}{6})$. Since the arclength density
\[
A(x)
=
\sqrt{1+p_1(x)^2}
\]
is at least $1$, we obtain
\begin{equation}\label{eq:local-fourier-lower}
\left|
\int_{J_n}
e^{-2\pi iR_n\Phi_n(x)}A(x)dx
\right|
\geq
cL_n.
\end{equation}

We next estimate the contribution from $I\setminus J_n$. If $x$ belongs to the same complementary interval $(a,b)$ but not to
$J_n$, then \eqref{eq:q-cylinder-separation} gives, outside a null set, a
separation of at least $\frac{1}{2}\ell c_n$ from every slope value on
$J_n$. Since $q$ is increasing, a point outside $D_n$ lies entirely to one
side of the slope interval $K_n$. Hence
\[
\operatorname{dist}(p_1(x),K_n)
\geq
\frac{1}{2}\ell c_n.
\]
Since $m_n\in K_n$, this gives
\begin{equation}\label{eq:same-gap-slope-separation}
|p_1(x)-m_n|
\geq
\frac{1}{2}\ell c_n.
\end{equation}

It remains to compare with slopes outside $(a,b)$. Every point of $D_n$
has the same first $n$ binary digits as $t_0$, apart from the irrelevant
dyadic endpoint. Hence every point in the essential range of $q$ on $D_n$
differs from $q(t_0)$ by at most $T_n$. Since $T_n\to0$, the interval $K_n$
shrinks to the single value $a+\ell q(t_0)$. In particular,
$m_n\to a+\ell q(t_0)$. Since $t_0\in(0,1)$ and $q$ is strictly increasing,
\[
d_0
=
\min\{q(t_0),1-q(t_0)\}
>
0.
\]
For all sufficiently large $n$, the distance from $m_n$ to both endpoint
slopes $a$ and $b$ is at least $\frac{1}{2}\ell d_0$. Monotonicity of
$p_1$ then gives the same lower bound for every point outside $(a,b)$.
Since $c_n\leq c_1$, this lower bound is at least a constant multiple of
$\ell c_n$. Together with \eqref{eq:same-gap-slope-separation}, this proves
\begin{equation}\label{eq:global-slope-separation}
|p_1(x)-m_n|
\geq
c\ell c_n
\end{equation}
for almost every $x\in I\setminus J_n$.

Because $p_1$ is strictly increasing and $m_n$ lies between the endpoints of
$K_n$, the derivative in \eqref{eq:phase-derivative-formula} is negative
on the component of $I\setminus J_n$ to the left of $J_n$ and positive on
the component to its right. On either component,
\[
|\Phi_n'|
\geq
c\ell c_n.
\]
The amplitude $A$ is monotone because $1\leq p_1\leq2$ and the function
$p\mapsto\sqrt{1+p^2}$ is increasing. In particular,
\[
\|A\|_\infty\leq\sqrt5,
\qquad
\operatorname{Var}_I(A)\leq\sqrt5-\sqrt2,
\]
uniformly in $n$. Applying Lemma \ref{lem:first-derivative} separately to the
at most two interval components of $I\setminus J_n$, after adjoining their
endpoints, which does not change the integrals, gives
\begin{align}
\left|
\int_{I\setminus J_n}
e^{-2\pi iR_n\Phi_n(x)}A(x)dx
\right|
&\leq
\frac{C}{R_n\ell c_n}
\notag\\
&=
C L_n\frac{T_n}{c_n}
\notag\\
&\leq
C L_n2^{-2n}.
\label{eq:complement-fourier-upper}
\end{align}
Here we used \eqref{eq:Rn-definition},
\eqref{eq:slope-oscillation}, and Lemma
\ref{lem:superincreasing-weights}.

The integral in \eqref{eq:strict-fourier-parametrization} is the sum of its
parts over $J_n$ and $I\setminus J_n$. The reverse triangle inequality,
\eqref{eq:local-fourier-lower}, and
\eqref{eq:complement-fourier-upper} therefore give
\begin{equation}\label{eq:fourier-lower-final}
|\widehat\sigma_I(R_n\omega_n)|
\geq
cL_n
\end{equation}
for all sufficiently large $n$.

Finally,
\[
c_{n+1}
\leq
T_n
\leq
2c_{n+1},
\]
so $T_n\asymp2^{-(n+1)^2}$. Therefore
\begin{equation}\label{eq:Rn-asymptotic}
R_n
\asymp
\ell^{-2}2^{n+(n+1)^2}.
\end{equation}
In particular, $R_n\to\infty$. For every $\alpha>0$,
\[
R_n^\alpha L_n
\asymp
\ell^{1-2\alpha}
2^{\alpha(n+(n+1)^2)-n}
\longrightarrow
\infty.
\]
Set
\[
\xi_n=R_n\omega_n.
\]
Since $|\omega_n|=1$, one has $|\xi_n|=R_n\to\infty$. Equation
\eqref{eq:fourier-lower-final} therefore gives
\[
|\xi_n|^\alpha|\widehat\sigma_I(\xi_n)|
\geq
cR_n^\alpha L_n
\longrightarrow
\infty.
\]
This proves \eqref{eq:no-polynomial-decay}.
\end{proof}

\begin{remark}\label{rem:varying-normal-directions}
The frequencies used in the proof are not confined to one fixed ray. The
direction $\omega_n$ is the normal corresponding to the midpoint of the
slope range on $J_n$, and these directions may vary with $n$. This is
important: the theorem asserts failure of a uniform pointwise polynomial
bound in $\xi$, not failure of decay along one prescribed direction. The
almost-flat pieces supply a sequence of increasingly high frequencies at
which a definite portion of arclength is nearly stationary, while monotonicity
of the slope prevents the rest of the subarc from canceling that
contribution.
\end{remark}

\begin{corollary}\label{cor:strict-separation}
There is a strictly convex curve $\Gamma$ such that
\[
T(\Gamma)=1,
\]
while no nontrivial subarc of $\Gamma$ supports arclength with pointwise
Fourier decay of any positive power.
\end{corollary}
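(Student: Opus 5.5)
The corollary is obtained by taking $\Gamma=\Gamma_1$, the graph of the function $\gamma_1$ defined in \eqref{eq:gamma1-definition}, and assembling two results already proved. Theorem \ref{thm:gamma1-geometric-properties} shows that $\gamma_1$ is Lipschitz and strictly convex. It also shows that $\gamma_1(x)=x^2/2$ on the fat Cantor set $F$. Consequently $\Gamma_1$ has a positive curved trace on the parabola, and Corollary \ref{cor:curved-trace-threshold} yields $T(\Gamma_1)=1$. This part requires no new work.

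For the Fourier statement, let $\Gamma_0'\subset\Gamma_1$ be any nontrivial subarc. Since $\Gamma_1$ is a graph over $[1,2]$, the first-coordinate projection of $\Gamma_0'$ is a compact connected set. It is nondegenerate because $\Gamma_0'$ has positive length and the graph parametrization is bi-Lipschitz. Hence
\[
\Gamma_0'=\Gamma_{1,I}
\]
for some nondegenerate compact interval $I\subset[1,2]$.

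Suppose, toward a contradiction, that $\mathcal H^1|_{\Gamma_0'}$ had pointwise decay of some positive power, that is, $|\widehat\sigma_I(\xi)|\lesssim(1+|\xi|)^{-\alpha}$ for some $\alpha>0$. Then $|\xi|^{\alpha/2}|\widehat\sigma_I(\xi)|\to0$. This contradicts Theorem \ref{thm:gamma1-no-polynomial-decay} applied with exponent $\alpha/2$, which gives $\limsup_{|\xi|\to\infty}|\xi|^{\alpha/2}|\widehat\sigma_I(\xi)|=\infty$.

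The same reasoning excludes any nonzero measure of the form $\mathcal H^1|_{\Gamma_0'}$ on a subarc. The only point needing care is this identification of an arbitrary nontrivial subarc with $\Gamma_{1,I}$ for a nondegenerate interval $I$, and that identification is immediate for a graph. No step presents a real obstacle; the substance lies entirely in the two cited theorems.
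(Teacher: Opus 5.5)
Your proposal is correct and matches the paper's proof, which takes $\Gamma=\Gamma_1$ and combines Theorem \ref{thm:gamma1-geometric-properties} (strict convexity and $T(\Gamma_1)=1$) with Theorem \ref{thm:gamma1-no-polynomial-decay} (failure of every polynomial decay rate on each $\Gamma_{1,I}$). Your additional steps are the routine unpacking that the paper leaves implicit: you identify an arbitrary nontrivial subarc with some $\Gamma_{1,I}$, and you turn a decay bound into a contradiction with the infinite $\limsup$. The passage to $\alpha/2$ is harmless but unnecessary, since a bound $|\xi|^\alpha|\widehat\sigma_I(\xi)|\lesssim1$ already contradicts that $\limsup$.
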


\begin{proof}
Take $\Gamma=\Gamma_1$ and combine Theorems
\ref{thm:gamma1-geometric-properties} and
\ref{thm:gamma1-no-polynomial-decay}.
\end{proof}

\section{A rectifiable endpoint theorem}\label{sec:rectifiable}

The preceding results impose geometric assumptions on the curve and then
allow an arbitrary compact set $E$ of dimension greater than one. At the
endpoint, a positive-measure conclusion also follows from rectifiability of
$E$ under a minimal assumption on the curve. Here a compact rectifiable curve
means the image of a continuous rectifiable path on a compact interval.

\begin{theorem}\label{thm:rectifiable-endpoint}
Let $\Gamma\subset\mathbb R^2$ be a compact rectifiable curve which is not
contained in a line. Let $E\subset\mathbb R^2$ contain a countably
$1$-rectifiable set $R$ with
\[
\mathcal H^1(R)>0.
\]
Then there are compact sets
\[
K_+\subset E+\Gamma
\qquad\text{and}\qquad
K_-\subset E-\Gamma
\]
such that $|K_+|>0$ and $|K_-|>0$.
\end{theorem}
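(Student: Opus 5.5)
The plan is the one indicated in the introduction: apply the area formula to the map $(s,t)\mapsto e(s)+r(t)$.

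\emph{Reduction to Lipschitz pieces.} Since $R$ is countably $1$-rectifiable with $\mathcal H^1(R)>0$, some Lipschitz image $e([0,1])$ meets $R$ in a set of positive $\mathcal H^1$ measure. Applying inner regularity and the differentiability theorem for Lipschitz maps, I will pass to a compact set $P\subset[0,1]$ of positive measure with $e(P)\subset R\subset E$ and with $e'(s)$ existing and nonzero for $s\in P$. This uses the area formula on $e$: the set where $e'=0$ contributes zero $\mathcal H^1$ measure to the image. Next, parametrize $\Gamma$ by arclength as a $1$-Lipschitz map $r:[0,\ell]\to\mathbb R^2$ with $|r'|=1$ almost everywhere. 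The hypothesis that $\Gamma$ is not contained in a line means the tangent directions of $r$ are not almost everywhere parallel to a single line. Indeed, if $r'(t)\in\mathbb R v$ for almost every $t$, integration would put $\Gamma$ in the line $r(0)+\mathbb R v$.

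\emph{Finding transversal pieces.} Fix a Lebesgue point $s_0\in P$ at which the approximate derivative direction is $u=e'(s_0)/|e'(s_0)|$. The set $Q=\{t:r'(t)\notin\mathbb Ru\}$ has positive measure, because not all tangents of $r$ are parallel to $u$. The main obstacle is making the Jacobian $|\det(e'(s),r'(t))|$ uniformly positive on a product set of positive measure, since $e'$ is only measurable. I will handle this with Lusin's theorem. Choose compact sets $P_0\subset P$ and $Q_0\subset Q$ of positive measure on which $e'$ and $r'$ are continuous. Then pick $s_1\in P_0$ and $t_1\in Q_0$ with $\det(e'(s_1),r'(t_1))\neq0$. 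Such a pair exists: if the determinant vanished on all of $P_0\times Q_0$, then every $r'(t)$ with $t\in Q_0$ would be parallel to every $e'(s)$ with $s\in P_0$, which contradicts the construction (choose $s_0\in P_0$ and define $Q$ relative to it). By continuity, shrinking to small relatively open pieces $P_1\subset P_0$ and $Q_1\subset Q_0$ (still of positive measure, using density points) gives $|\det|\geq c>0$ on $P_1\times Q_1$.

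\emph{Conclusion.} Put $F(s,t)=e(s)+r(t)$, which is Lipschitz on $[0,1]\times[0,\ell]$. Its Jacobian $J_F=|\det(e'(s),r'(t))|$ is at least $c$ on $P_1\times Q_1$, and $|P_1\times Q_1|>0$. The area formula gives
\[
\int_{\mathbb R^2}\#\bigl(F^{-1}(z)\cap(P_1\times Q_1)\bigr)\,dz=\int_{P_1\times Q_1}J_F>0 .
\]
Hence the image $K_+=F(P_1'\times Q_1')$ has positive measure. Here $P_1'$ and $Q_1'$ are compact subsets of positive measure; the integer multiplicity alone cannot make the integral positive if the image is null. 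This $K_+$ is compact and lies in $e(P)+\Gamma\subset E+\Gamma$. For $K_-$, I repeat the argument with $r$ replaced by $-r$, which parametrizes $-\Gamma$. The tangents of $-r$ are negatives of those of $r$, so the same transversality and the same determinant bound hold, and the set lies in $E-\Gamma$.
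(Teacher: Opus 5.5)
Your proof is correct. Its skeleton matches the paper's: a Lipschitz piece $e$ with $e'\neq0$ on a compact set, the arclength parametrization $r$, and the area formula for $(s,t)\mapsto e(s)\pm r(t)$. The two arguments differ only in how they show the Jacobian integral is positive.

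What you call the main obstacle is not really one. The area formula only needs $\iint J_F>0$, not a uniform lower bound on a product set. The paper gets this by slicing. For each $s\in A$, if $\det(e'(s),r'(t))=0$ for almost every $t$, then $t\mapsto r(t)\cdot n_s$ is absolutely continuous with zero derivative, so $\Gamma$ would lie in a line. Hence every $t$-slice integral is positive, Tonelli gives $\iint_{A\times[0,L]}J_{F_+}>0$, and one takes $K_+=F_+(A\times[0,L])$.

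Your Lusin route is valid, but the choices must be made in the right order:
\begin{itemize}
\item first $P_0$ by Lusin;
\item then $s_0$ a density point of $P_0$;
\item then $Q$ defined relative to $s_0$;
\item then $Q_0\subset Q$ by Lusin, and $t_1$ a density point of $Q_0$.
\end{itemize}
With this order the nondegenerate pair is immediate: $(s_1,t_1)=(s_0,t_1)$, and no contradiction argument is needed.

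You should also take the compact pieces $P_1'=P_0\cap[s_0-\eta,s_0+\eta]$ and $Q_1'=Q_0\cap[t_1-\eta,t_1+\eta]$ and apply the area formula directly on $P_1'\times Q_1'$. As written, you apply it on the noncompact $P_1\times Q_1$ and then switch to a different set for $K_+$.

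Your version gives an explicit product set with a uniform transversality bound, $\int N\,dz\geq c|P_1'||Q_1'|$, and it uses non-collinearity at only one direction $e'(s_0)$. The paper's version is shorter, avoids Lusin's theorem and the density-point bookkeeping, and uses the whole curve $[0,L]$.
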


\begin{proof}
By the definition of countable $1$-rectifiability, one of the Lipschitz
parametrizations covering $R$ up to an $\mathcal H^1$-null set has
positive-length intersection with $R$. Extending its two coordinate functions
to the line and restricting to a sufficiently large compact interval, we
obtain a compact interval $J\subset\mathbb R$ and a Lipschitz map
\[
e:J\longrightarrow\mathbb R^2
\]
such that
\[
\mathcal H^1(R\cap e(J))>0;
\]
see \cite[Chapter 15]{Mattila}. The set $e(J)$ has finite length. By inner
regularity, choose a compact set
\[
R_0\subset R\cap e(J)
\]
with $\mathcal H^1(R_0)>0$, and put
\[
A_0=e^{-1}(R_0).
\]
Then $A_0$ is compact and $e(A_0)=R_0$. For a map $f$ and a set $B$ in its
domain, write
\[
N(f,B,z)
=
\#\{w\in B:f(w)=z\}
\]
for the multiplicity of $f$ over $z$ on $B$. The one-dimensional area formula
\cite[Chapter 7]{Mattila} gives
\begin{align}
\int_{A_0}|e'(s)|\,ds
&=
\int_{\mathbb R^2}N(e,A_0,z)\,d\mathcal H^1(z)
\notag\\
&\geq
\mathcal H^1(R_0)
>
0.
\label{eq:positive-parameter-derivative}
\end{align}
Consequently, the measurable set
\[
A_1
=
\{s\in A_0:e'(s)\text{ exists and }|e'(s)|>0\}
\]
has positive Lebesgue measure. By inner regularity, $A_1$ contains a compact
subset $A$ of positive measure. Thus $e'(s)$ exists and is nonzero for every
$s\in A$, and
\[
e(A)\subset R_0\subset R\subset E.
\]

Rectifiability of $\Gamma$ gives an arclength parametrization
\[
r:[0,L]\longrightarrow\mathbb R^2
\]
whose image is $\Gamma$ and for which $|r'(t)|=1$ almost everywhere. The
assumption that $\Gamma$ is not contained in a line implies, in particular,
that $L>0$. Define
\[
F_+(s,t)
=
e(s)+r(t),
\qquad
(s,t)\in J\times[0,L].
\]
This map is Lipschitz. Wherever both $e'(s)$ and $r'(t)$ exist, its
differential has columns $e'(s)$ and $r'(t)$. By Rademacher's theorem and
Fubini's theorem, this holds for almost every $(s,t)$, and hence the
approximate Jacobian on $A\times[0,L]$ is
\begin{equation}\label{eq:rectifiable-jacobian}
J_{F_+}(s,t)
=
|\det(e'(s),r'(t))|
\end{equation}
for almost every $(s,t)\in A\times[0,L]$.

Fix $s\in A$. If the determinant in \eqref{eq:rectifiable-jacobian} vanished
for almost every $t$, let $n_s$ be a unit vector perpendicular to $e'(s)$.
Up to a choice of orientation,
\[
\frac{d}{dt}\bigl(r(t)\cdot n_s\bigr)
=
r'(t)\cdot n_s
=
\frac{\det(e'(s),r'(t))}{|e'(s)|}
\]
for almost every $t$. The function $t\mapsto r(t)\cdot n_s$ is absolutely
continuous, so it would be constant. This would place the image of $r$, and
hence $\Gamma$, in an affine line parallel to $e'(s)$, contrary to the
hypothesis. Therefore
\[
\int_0^L|\det(e'(s),r'(t))|\,dt>0
\]
for every $s\in A$. The inner integral is a nonnegative measurable function
of $s$. Since $|A|>0$, Tonelli's theorem gives
\begin{equation}\label{eq:positive-total-jacobian}
\iint_{A\times[0,L]}J_{F_+}(s,t)\,ds\,dt
>
0.
\end{equation}

The two-dimensional area formula gives
\[
\iint_{A\times[0,L]}J_{F_+}(s,t)\,ds\,dt
=
\int_{\mathbb R^2}N(F_+,A\times[0,L],z)\,dz.
\]
If $F_+(A\times[0,L])$ had planar measure zero, the right side would vanish,
contradicting \eqref{eq:positive-total-jacobian}. Hence
\[
K_+
=
F_+(A\times[0,L])
\]
has positive planar measure. It is compact because $A\times[0,L]$ is compact,
and it is contained in $E+\Gamma$ because $e(A)\subset E$.

For the difference set, define
\[
F_-(s,t)=e(s)-r(t).
\]
Its approximate Jacobian is
\[
|\det(e'(s),-r'(t))|
=
|\det(e'(s),r'(t))|.
\]
The same argument shows that
\[
K_-
=
F_-(A\times[0,L])
\]
is a compact set of positive measure contained in $E-\Gamma$.
\end{proof}

\begin{remark}
Theorem \ref{thm:rectifiable-endpoint} does not assert a universal result for
all sets of Hausdorff dimension one. For a smooth curve of nonvanishing
curvature, Simon and Taylor proved that a finite-length $1$-set has a null sum
precisely when it is purely unrectifiable \cite{SimonTaylor}. The theorem
above is the positive half of that picture under the weaker assumption that
$\Gamma$ is merely rectifiable and not a line.
\end{remark}

\section{The Fourier benchmark and remaining questions}
\label{sec:fourier-comparison}

We finish by recording the standard Fourier implication in its shortest
form. This makes precise what the examples in Sections
\ref{sec:nonrajchman} and \ref{sec:strict-example} go beyond.

\begin{proposition}[The direct Fourier criterion]\label{prop:direct-fourier}
Let $\Gamma\subset\mathbb R^2$ be compact, and suppose that there is a
nonzero finite positive measure $\sigma$ supported on $\Gamma$ such that, for
some $0<\alpha<1$,
\begin{equation}\label{eq:direct-decay}
|\widehat\sigma(\xi)|
\leq
C(1+|\xi|)^{-\alpha}.
\end{equation}
If $E\subset\mathbb R^2$ is compact and
\begin{equation}\label{eq:direct-fourier-threshold}
\dim_{\mathrm H}(E)>2-2\alpha,
\end{equation}
then
\[
|E+\Gamma|>0.
\]
\end{proposition}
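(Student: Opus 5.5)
The plan is to follow the sketch given in the introduction: build a nonzero $L^2$ density supported on $E+\Gamma$ by convolving a Frostman measure on $E$ with $\sigma$.

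\textbf{Step 1 (Frostman measure with finite energy).} Set $s=2-2\alpha\in(0,2)$. By \eqref{eq:direct-fourier-threshold} we may fix $\beta$ with $s<\beta<\dim_{\mathrm H}(E)$. Frostman's lemma gives a nonzero finite positive measure $\mu$ supported on $E$ with $\mu(B(x,r))\leq C_\mu r^\beta$. Splitting $|x-y|<1$ into dyadic annuli exactly as at the end of the proof of Theorem \ref{thm:overlap-coverage} yields
\[
I_s(\mu)\leq\mu(E)^2+C\mu(E)\sum_{j\geq0}2^{-j(\beta-s)}<\infty.
\]

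\textbf{Step 2 (Fourier side).} I would use the Riesz-energy identity from \cite{Mattila}: for $0<s<2$,
\[
I_s(\mu)=c_s\int|\widehat\mu(\xi)|^2|\xi|^{s-2}\,d\xi .
\]
Since $s-2=-2\alpha$, the decay hypothesis \eqref{eq:direct-decay} gives $|\widehat\sigma(\xi)|^2\leq C^2(1+|\xi|)^{-2\alpha}\leq C^2|\xi|^{-2\alpha}$. Therefore
\[
\int|\widehat\mu\,\widehat\sigma|^2\,d\xi\leq C^2c_s^{-1}I_s(\mu)<\infty.
\]

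\textbf{Step 3 (conclusion).} The measure $\nu=\mu*\sigma$ is a finite positive measure with $\widehat\nu=\widehat\mu\,\widehat\sigma\in L^2$. It is also tempered, with $\widehat\nu$ bounded and continuous. By Plancherel, $\nu$ is absolutely continuous with density $f\in L^2(\mathbb R^2)$. Concretely, $\nu$ coincides as a distribution with the $L^2$ function $\mathcal F^{-1}(\widehat\nu)$, so it is given by that density.

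The support of $\nu$ lies in $\operatorname{supp}\mu+\operatorname{supp}\sigma\subset E+\Gamma$, and this sum is compact. Hence $f=0$ a.e. off $E+\Gamma$. Since $\nu(\mathbb R^2)=\mu(E)\sigma(\Gamma)>0$, we get $\int_{E+\Gamma}f>0$, which forces $|E+\Gamma|>0$. Alternatively, Cauchy--Schwarz gives the quantitative bound $\nu(\mathbb R^2)^2\leq|E+\Gamma|\,\|f\|_2^2$, paralleling the proof of Theorem \ref{thm:overlap-coverage}.

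\textbf{Main obstacle.} There is no real difficulty here. The only point requiring care is justifying that a finite measure whose Fourier transform lies in $L^2$ has an $L^2$ density. This is standard: pair $\nu$ against Schwartz functions and apply Parseval.
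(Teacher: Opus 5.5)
Your proposal is correct and follows the paper's proof essentially step for step. Both take a Frostman measure with $s<\beta<\dim_{\mathrm H}(E)$ and use annular summation to get $I_s(\mu)<\infty$. Both then combine the Riesz-energy identity with the decay bound to place $\widehat\mu\,\widehat\sigma$ in $L^2$, and finish with Plancherel, the support and total-mass argument, and the same Cauchy--Schwarz lower bound for $|E+\Gamma|$.
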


\begin{proof}
Put $s=2-2\alpha$. Recall that
\[
I_s(\mu)
=
\iint |x-y|^{-s}\,d\mu(x)\,d\mu(y)
\]
for a finite positive measure $\mu$. Choose
\[
s<\beta<\dim_{\mathrm H}(E).
\]
By Frostman's lemma \cite{Mattila}, there is a nonzero finite measure $\mu$
supported on $E$ such that
\[
\mu(B(x,r))\leq C_\mu r^\beta.
\]
The standard passage from a $\beta$-Frostman estimate to finite $s$-energy is
an annular summation. Indeed,
\begin{align*}
I_s(\mu)
&\leq
\mu(E)^2
+
\sum_{j=0}^\infty
2^{(j+1)s}
\int_E\mu(B(x,2^{-j}))\,d\mu(x)
\\
&\leq
\mu(E)^2
+
C\mu(E)\sum_{j=0}^\infty2^{-j(\beta-s)}
<
\infty.
\end{align*}
The Riesz-energy identity \cite{Mattila} gives
\[
I_s(\mu)
=
c_s\int_{\mathbb R^2}
|\widehat\mu(\xi)|^2|\xi|^{s-2}\,d\xi.
\]
Since $s-2=-2\alpha$, this identity and \eqref{eq:direct-decay} imply
\begin{align*}
\int_{\mathbb R^2}
|\widehat\mu(\xi)\widehat\sigma(\xi)|^2\,d\xi
&\leq
C\int_{\mathbb R^2}
|\widehat\mu(\xi)|^2|\xi|^{-2\alpha}\,d\xi
\\
&\leq
C I_s(\mu).
\end{align*}
By Plancherel's theorem, $\mu*\sigma$ has a nonnegative $L^2$ density $f$.
The measure $\mu*\sigma$ is supported on $E+\Gamma$ and has total mass
$\mu(E)\sigma(\Gamma)>0$. Cauchy--Schwarz therefore gives
\begin{align}
\bigl(\mu(E)\sigma(\Gamma)\bigr)^2
&=
\left(\int_{E+\Gamma}f(z)\,dz\right)^2
\notag\\
&\leq
|E+\Gamma|\,\|f\|_2^2.
\label{eq:direct-fourier-quantitative}
\end{align}
Consequently,
\[
|E+\Gamma|
\geq
\frac{\bigl(\mu(E)\sigma(\Gamma)\bigr)^2}
{\|\mu*\sigma\|_2^2}
>
0.
\]
\end{proof}

For a compact rectifiable curve $\Gamma$, define its arcwise polynomial
decay exponent by
\begin{equation}\label{eq:arcwise-decay-exponent}
\alpha_{\mathrm{arc}}(\Gamma)
=
\sup
\left\{
\begin{aligned}
\alpha>0:
&\text{ there is a nontrivial subarc }\Gamma_0\subset\Gamma\\
&\text{ such that }
|\widehat{\mathcal H^1|_{\Gamma_0}}(\xi)|
\lesssim
(1+|\xi|)^{-\alpha}
\end{aligned}
\right\}.
\end{equation}
If the set in \eqref{eq:arcwise-decay-exponent} is empty, we set
$\alpha_{\mathrm{arc}}(\Gamma)=0$.

The quantity $\alpha_{\mathrm{arc}}(\Gamma)$ is meant only as a convenient
way to compare the geometric threshold with the classical arclength
argument. For a $C^2$ arc with curvature bounded away from zero, stationary
phase gives the exponent $\frac12$, and Proposition
\ref{prop:direct-fourier} then recovers the threshold
$\dim_{\mathrm H}(E)>1$. The strictly convex example constructed above has the same
geometric threshold even though this local arclength exponent drops all the
way to zero. In that sense the example separates two statements which agree
for the usual smooth models: the existence of pointwise Fourier decay for
arclength and the positive-measure behavior of all translates indexed by a
set of dimension greater than one.

It is important that the definition is arcwise. If one used only arclength
on the whole curve, a single poorly behaved portion could destroy a global
Fourier estimate while another curved subarc retained enough decay to prove
$T(\Gamma)=1$ by the direct criterion. Requiring the obstruction on every
nontrivial subarc rules out this possibility. On the other hand,
$\alpha_{\mathrm{arc}}(\Gamma)=0$ does not say that every measure supported
on every subarc has Fourier dimension zero, nor is such a statement needed
for our purpose. The comparison here is with the natural arclength measure
that appears in the standard convolution argument.

\begin{corollary}\label{cor:T-one-alpha-zero}
There is a strictly convex Lipschitz graph $\Gamma$ such that
\[
T(\Gamma)=1
\qquad\text{and}\qquad
\alpha_{\mathrm{arc}}(\Gamma)=0.
\]
There is also a convex Lipschitz graph with $T(\Gamma)=1$ for which arclength
on every nontrivial subarc fails to be Rajchman.
\end{corollary}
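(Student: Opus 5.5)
The plan is to take $\Gamma=\Gamma_1$ for the first assertion and $\Gamma=\Gamma_0$ for the second, and to read both parts off results already proved.

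For the strictly convex graph, Theorem \ref{thm:gamma1-geometric-properties} shows that $\gamma_1$ is strictly convex and Lipschitz, and that $T(\Gamma_1)=1$. It remains to show that the set in \eqref{eq:arcwise-decay-exponent} is empty, so that $\alpha_{\mathrm{arc}}(\Gamma_1)=0$ by convention.

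First I check that every nontrivial subarc $\Gamma_0'\subset\Gamma_1$ has the form $\{(x,\gamma_1(x)):x\in I\}$ for a nondegenerate compact interval $I\subset[1,2]$. This holds because $\Gamma_1$ is a graph: the projection to the first coordinate is a homeomorphism of $\Gamma_1$ onto $[1,2]$, so it carries a connected compact subset with more than one point to a nondegenerate compact interval. The arclength measure of $\Gamma_0'$ is then exactly $\sigma_I$.

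Now suppose some $\alpha>0$ satisfied $|\widehat{\sigma_I}(\xi)|\lesssim(1+|\xi|)^{-\alpha}$. Then
\[
\limsup_{|\xi|\to\infty}|\xi|^{\alpha}|\widehat\sigma_I(\xi)|<\infty .
\]
This contradicts Theorem \ref{thm:gamma1-no-polynomial-decay}, applied with this $I$ and this $\alpha$. Hence no admissible $\alpha$ exists, and $\alpha_{\mathrm{arc}}(\Gamma_1)=0$.

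For the second assertion, take $\Gamma_0$. Lemma \ref{lem:gamma0-properties} shows that $\gamma_0$ is convex and Lipschitz. Theorem \ref{thm:gamma0-nonrajchman} gives $T(\Gamma_0)=1$. By the same identification of subarcs with graphs over nondegenerate intervals, it also gives a direction $\omega_I$ along which $\widehat\sigma_I(R\omega_I)\not\to0$. So arclength on every nontrivial subarc fails to be Rajchman.

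There is no real obstacle. The only point needing care is the subarc-to-interval identification, including the meaning of "nontrivial subarc" (one with more than one point). I would state it briefly, since it is what lets the interval-indexed theorems cover every subarc in the definition of $\alpha_{\mathrm{arc}}$.
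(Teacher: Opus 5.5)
Your proposal is correct and follows the paper's proof. The paper takes $\Gamma_1$ for the first assertion, via Corollary \ref{cor:strict-separation}, which combines Theorems \ref{thm:gamma1-geometric-properties} and \ref{thm:gamma1-no-polynomial-decay}, and cites Theorem \ref{thm:gamma0-nonrajchman} for the second; your identification of nontrivial subarcs with graphs over nondegenerate compact intervals, and your check that the defining set of $\alpha_{\mathrm{arc}}(\Gamma_1)$ is empty, spell out steps the paper leaves implicit.
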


\begin{proof}
The first assertion is Corollary \ref{cor:strict-separation}. The second is
Theorem \ref{thm:gamma0-nonrajchman}.
\end{proof}

Thus polynomial pointwise decay of arclength on a positive-length subarc is
not necessary for the optimal universal Minkowski-sum threshold. The
mechanism is geometric: a positive-length curved trace supplies enough
translated-tube transversality even when the arclength measure on every
subarc has severe Fourier obstructions. We do not claim that the curve
supports no other specially chosen measure having some Fourier decay; the
separation established here concerns the natural arclength measures that
enter the usual curve-averaging argument.

Theorems \ref{thm:positive-curved-trace} and \ref{thm:ac-curvature} do not
treat convex graphs whose curvature measure is singular. We first record a
converse observation which explains why the curved-trace mechanism cannot be
used in that regime.

\begin{proposition}[Purely singular curvature]
\label{prop:singular-no-trace}
Let $\gamma:[a,b]\to\mathbb R$ be convex and Lipschitz, and let $\Gamma$ be
its graph. If $D^2\gamma$ is singular with respect to Lebesgue measure, then
$\Gamma$ has no positive curved trace.
\end{proposition}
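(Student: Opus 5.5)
We argue by contradiction. Suppose $\Gamma$ has a positive curved trace: a compact $C^2$ graph $\Sigma=\{(x,g(x)):x\in I\}$ with $|g''|\geq c_0$ on $I$, and a compact $A\subset\Gamma\cap\Sigma$ with $\mathcal H^1(A)>0$.

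\textbf{Step 1: reduce to a set of parameters.} As in the proof of Corollary \ref{cor:curved-trace-threshold}, the first-coordinate projection $S$ of $A$ is compact with $|S|>0$, and $A=\{(x,g(x)):x\in S\}$. Since $A\subset\Gamma$, each point of $A$ is also $(x,\gamma(x))$ for some $x\in[a,b]$. Comparing first coordinates shows $S\subset[a,b]\cap I$ and $\gamma=g$ on $S$.

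\textbf{Step 2: compare second derivatives at a generic point.} Choose $x_0\in S$ with the following properties: it is a Lebesgue density point of $S$, Lemma \ref{lem:convex-second-order} holds at $x_0$, and $k(x_0)=0$. Such a point exists because $D^2\gamma$ is singular, so its absolutely continuous density $k$ vanishes almost everywhere, and each of the three conditions fails only on a null set. We then run the argument from the proof of Theorem \ref{thm:ac-curvature} verbatim. The function $h=\gamma-g$ has a second-order expansion at $x_0$ with coefficients $h_1=\gamma'(x_0)-g'(x_0)$ and $h_2=k(x_0)-g''(x_0)$. Since $h$ vanishes on $S$, we may choose points $x_j\in S\setminus\{x_0\}$ tending to $x_0$; these exist because $x_0$ is a density point. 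Substituting $u_j=x_j-x_0$ into the expansion forces $h_1=0$ and then $h_2=0$. Hence $g''(x_0)=k(x_0)=0$, which contradicts $|g''(x_0)|\geq c_0>0$.

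\textbf{Main obstacle.} The only delicate point is justifying that the Peano coefficient in Lemma \ref{lem:convex-second-order} is exactly $k(x_0)$, and that $k=0$ almost everywhere under the singularity hypothesis. Both follow from the Lebesgue decomposition: $D^2\gamma$ singular means its absolutely continuous part is zero, so we may take $k\equiv0$. Lemma \ref{lem:convex-second-order} then gives $\gamma(x_0+u)=\gamma(x_0)+\gamma'(x_0)u+o(u^2)$ at almost every $x_0$.

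A minor technical point is that $x_0$ must be interior to both $[a,b]$ and $I$ for the expansions to make sense. This costs nothing, because the endpoints form a null set and can be discarded from $S$.
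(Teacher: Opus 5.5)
Your proof is correct and follows the paper's argument. Like the paper, you pass to the projection $S$, choose a density point where the Alexandrov expansion of Lemma \ref{lem:convex-second-order} holds, and match the first- and second-order coefficients of $\gamma-g$ along points of $S$. The only difference is cosmetic: you choose $x_0$ with $k(x_0)=0$ from the start and contradict $|g''(x_0)|\geq c_0$ directly, whereas the paper derives $k(x_0)=g''(x_0)\geq c_0$ on a positive-measure set and contradicts the singularity of $D^2\gamma$.
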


\begin{proof}
Suppose instead that $\Gamma$ has a positive curved trace on
\[
\Sigma=\{(x,g(x)):x\in I\},
\]
where $g\in C^2(I)$ and $|g''|\geq c_0>0$. Let $A\subset\Gamma\cap\Sigma$
be the trace and let $S$ be its projection onto the first coordinate. Since
the graph parametrization of $\Sigma$ is bi-Lipschitz, $|S|>0$, and
$\gamma=g$ on $S$.

Choose a point $x_0\in S$ which is a Lebesgue density point of $S$ and at
which Lemma \ref{lem:convex-second-order} applies to $\gamma$. Such points
form a full-measure subset of $S$. Choose $x_j\in S\setminus\{x_0\}$ with
$x_j\to x_0$ and put $u_j=x_j-x_0$. Taylor's formula for $g$ and
\eqref{eq:convex-second-order} give
\begin{align*}
0
&=
(\gamma-g)(x_0+u_j)-(\gamma-g)(x_0)
\\
&=
(\gamma'(x_0)-g'(x_0))u_j
+
\frac12(k(x_0)-g''(x_0))u_j^2
+
o(u_j^2).
\end{align*}
Dividing first by $u_j$ and then, after the linear coefficient is known to
vanish, by $u_j^2$, gives
\[
\gamma'(x_0)=g'(x_0),
\qquad
k(x_0)=g''(x_0).
\]
Since $k\geq0$ almost everywhere and $|g''|\geq c_0$, it follows that
$k(x_0)\geq c_0$ at almost every such density point of $S$. Hence $k$ is
positive on a set of positive measure, contradicting the singularity of
$D^2\gamma$. This proves the proposition.
\end{proof}

A quantitative lower growth condition on the curvature nevertheless gives an
intermediate threshold. The following result also indicates what strict
convexity by itself does not provide.

\begin{proposition}[Quantitative lower growth of curvature]
\label{prop:curvature-lower-growth}
Let $\gamma:[a,b]\to\mathbb R$ be convex and Lipschitz, let $\Gamma$ be its
graph, and let $p=\gamma'_+$ be its right derivative. Suppose that for some
$1\leq\theta<2$ and $c_0>0$,
\begin{equation}\label{eq:curvature-lower-growth}
p(t)-p(s)
\geq
c_0(t-s)^\theta
\end{equation}
whenever $a\leq s<t<b$. Then, for every compact set
$E\subset\mathbb R^2$,
\[
\dim_{\mathrm H}(E)>\theta
\quad\Longrightarrow\quad
|E+\Gamma|>0.
\]
Consequently,
\[
T(\Gamma)\leq\theta.
\]
\end{proposition}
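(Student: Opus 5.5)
We will run the overlap-to-coverage argument of Theorem \ref{thm:overlap-coverage} with $A=\Gamma$ itself, but with a weaker overlap bound adapted to the exponent $\theta$. The target is
\[
|\Gamma^\delta\cap(h+\Gamma^\delta)|
\lesssim
\delta\min\Bigl\{1,\Bigl(\frac{\delta}{|h|}\Bigr)^{1/\theta}\Bigr\}
\asymp
\frac{\delta^{1+1/\theta}}{(\delta+|h|)^{1/\theta}}.
\]
Combined with $|\Gamma^\delta|\asymp\delta$ (Lemma \ref{lem:trace-tube-volume} with $S=[a,b]$), the second moment of $F_\delta$ in \eqref{eq:Fdelta} becomes $\lesssim\iint \delta^{1/\theta-1}(\delta+|x-y|)^{-1/\theta}\,d\mu\,d\mu$. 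This is not uniform in $\delta$, so the kernel bookkeeping has to be redone; we address this at the end.

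\textbf{Step 1: the overlap estimate.} We repeat the proof of Theorem \ref{thm:curved-graph-overlap}, replacing $V_\delta$ by the vertical neighborhood $\{|y-\gamma(x)|\leq C_0\delta\}$; endpoint disks are handled exactly as before. For $h=(u,v)$ with $u>0$, set $G_u(x)=\gamma(x)-\gamma(x-u)$.

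For $x_1<x_2$, hypothesis \eqref{eq:curvature-lower-growth} gives
\[
G_u(x_2)-G_u(x_1)=\int_{x_1}^{x_2}\bigl(p(t)-p(t-u)\bigr)dt\geq c_0u^\theta(x_2-x_1).
\]
Hence the sublevel set $|G_u-v|\leq2C_0\delta$ has length $\lesssim\delta/u^\theta$. This only yields the weaker bound $\delta^2/|u|^\theta$.

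\textbf{The problem with Step 1, and the fix.} With the bound $\delta^2/|u|^\theta$, the normalized second moment is $\iint\delta\,(\delta+|x-y|)^{-\theta}\,d\mu\,d\mu$. For a $\beta$-Frostman $\mu$ with $\beta>\theta$, this is $\lesssim\delta\cdot I_\theta(\mu)$, which tends to $0$ as $\delta\to0$. That is the wrong direction: it shows the moment is small, not bounded, because the sublevel-length bound is $\delta/u^\theta$ while the tube has area $\delta$.

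The correct accounting: the overlap is at most $\min\{\delta,\ \delta^2/|u|^\theta\}$, and dividing by $|\Gamma^\delta|^2\asymp\delta^2$ gives the kernel $\min\{\delta^{-1},|x-y|^{-\theta}\}$ (after the comparison $|u|\gtrsim|h|$ below). Therefore $\|F_\delta\|_2^2\lesssim I_\theta(\mu)$, uniformly in $\delta$. This is finite when $\dim_{\mathrm H}(E)>\theta$, by Frostman's lemma and the annular summation used in the proof of Theorem \ref{thm:overlap-coverage}.

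From there the argument concludes exactly as in Theorem \ref{thm:overlap-coverage}:
\begin{itemize}
\item Cauchy--Schwarz gives $|E+\Gamma^\delta|\geq\mu(E)^2/(CI_\theta(\mu))$;
\item the nested intersection identity \eqref{eq:intersection-thickened-sums} passes to $E+\Gamma$.
\end{itemize}

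\textbf{Step 2: comparing $|u|$ with $|h|$.} As in Theorem \ref{thm:curved-graph-overlap}, the Lipschitz bound gives $|h|\leq(1+L)|u|+2C_0\delta$. So when $|h|\geq C_1\delta$, we have $|u|\gtrsim|h|$, while $|h|\lesssim\delta$ is covered by the trivial bound $\delta$.

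The hypothesis only holds for $s,t<b$ with $p$ the right derivative. The sublevel estimate integrates $p$ over the interior, so this causes no trouble.

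The main obstacle is just this bookkeeping: verifying that the kernel $\min\{\delta^{-1},|x-y|^{-\theta}\}$ is dominated by $|x-y|^{-\theta}$ off the diagonal, and that $\mu$ has no atoms. Both follow from $I_\theta(\mu)<\infty$ with $\theta\geq1$. Finally, $T(\Gamma)\leq\theta$ is immediate from the definition \eqref{eq:intro-threshold}.
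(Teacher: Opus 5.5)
Your final argument is essentially the paper's proof. It runs as follows: the sublevel set $\{x:|G_u(x)-v|\le 2C_0\delta\}$ has length $\lesssim\delta/|u|^\theta$; once $|h|\ge C_1\delta$ one has $|u|\gtrsim|h|$; the normalized kernel satisfies $\min\{\delta^{-1},|x-y|^{-\theta}\}\lesssim|x-y|^{-\theta}$, so $\|F_\delta\|_2^2\lesssim I_\theta(\mu)$; then Cauchy--Schwarz and the nested intersection finish. The paper packages the same estimate as $|\Gamma^\delta\cap(h+\Gamma^\delta)|\lesssim\delta^2/(\delta+|h|^\theta)$ with the cutoff $|h|\sim\delta^{1/\theta}$ instead of $|h|\sim\delta$. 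This difference is immaterial, since $\delta^{-1}\lesssim|h|^{-\theta}$ when $|h|\lesssim\delta\le1$ and $\theta\ge1$.

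You should delete two statements. The opening ``target'' $\delta^{1+1/\theta}(\delta+|h|)^{-1/\theta}$ is not what your Step 1 produces and is never used. The claim that the normalized second moment is $\iint\delta(\delta+|x-y|)^{-\theta}\,d\mu\,d\mu$ is a normalization slip: dividing $\delta^2/|u|^\theta$ by $|\Gamma^\delta|^2\asymp\delta^2$ gives $|u|^{-\theta}$, not $\delta|u|^{-\theta}$.
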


\begin{proof}
Write $L=\|p\|_\infty$ and fix $0<\delta<\delta_0$, where
$0<\delta_0\leq1$. As in the proof of Theorem
\ref{thm:curved-graph-overlap}, the Euclidean $\delta$-neighborhood of
$\Gamma$ is contained in the vertical neighborhood
\[
V_\delta
=
\{(x,y):a\leq x\leq b,\ |y-\gamma(x)|\leq C_0\delta\}
\]
together with two endpoint disks of radius $C_0\delta$.

Let $h=(u,v)$. If $u>0$, put
\[
G_u(x)=\gamma(x)-\gamma(x-u)
\]
on the interval where both terms are defined. At every $x$ for which $\gamma'$ exists at both $x$ and $x-u$, a
full-measure set in the common domain,
\[
G_u'(x)
=
p(x)-p(x-u)
\geq
c_0u^\theta
\]
by \eqref{eq:curvature-lower-growth}. Since $G_u$ is absolutely continuous,
the set of $x$ satisfying
\[
|G_u(x)-v|\leq2C_0\delta
\]
has length at most $C\delta/u^\theta$. The same conclusion holds for $u<0$,
with $|u|$ in place of $u$, because the derivative then has the opposite sign
and the same lower bound in absolute value. Since every vertical section of
the overlap has length at most $2C_0\delta$,
\begin{equation}\label{eq:theta-vertical-overlap}
|V_\delta\cap(h+V_\delta)|
\leq
C\delta
\min\left\{1,\frac{\delta}{|u|^\theta}\right\},
\end{equation}
where the minimum is interpreted as $1$ when $u=0$.

If the overlap is nonempty, the Lipschitz property gives
\[
|h|
\leq
(1+L)|u|+2C_0\delta.
\]
When $|h|\leq C_1\delta^{1/\theta}$, one has
$\delta+|h|^\theta\leq C\delta$, and the trivial bound
$|V_\delta|\leq C\delta$ yields
\[
|V_\delta\cap(h+V_\delta)|
\leq
C\frac{\delta^2}{\delta+|h|^\theta}.
\]
When $|h|>C_1\delta^{1/\theta}$, note that
$\delta\leq\delta^{1/\theta}$ because $\delta\leq1$ and $\theta\geq1$.
Choosing $C_1>4C_0$, the preceding comparison gives
\[
|u|
\geq
\frac{|h|}{2(1+L)}.
\]
In this regime $|u|^\theta\gtrsim|h|^\theta\gtrsim\delta$, so the second
term in the minimum in \eqref{eq:theta-vertical-overlap} applies and gives
\[
|V_\delta\cap(h+V_\delta)|
\leq
C\frac{\delta^2}{|h|^\theta}
\leq
C\frac{\delta^2}{\delta+|h|^\theta}.
\]

Every intersection involving an endpoint disk has area $O(\delta^2)$. If
such an intersection is nonempty, then $|h|$ is bounded in terms of
$\Gamma$. Consequently, $\delta+|h|^\theta\lesssim_\Gamma 1$, and its area
is bounded by $C\delta^2/(\delta+|h|^\theta)$. Summing the finitely many
intersections, we have proved
\begin{equation}\label{eq:theta-graph-overlap}
|\Gamma^\delta\cap(h+\Gamma^\delta)|
\leq
C\frac{\delta^2}{\delta+|h|^\theta}.
\end{equation}

Lemma \ref{lem:trace-tube-volume}, applied with $S=[a,b]$, gives
$|\Gamma^\delta|\geq c\delta$. Let $\mu$ be a nonzero finite positive measure
supported on $E$ with
\[
I_\theta(\mu)
=
\iint|x-y|^{-\theta}\,d\mu(x)\,d\mu(y)
<
\infty.
\]
Define
\[
F_\delta(z)
=
\int_E\frac{1_{x+\Gamma^\delta}(z)}{|\Gamma^\delta|}\,d\mu(x).
\]
Then $F_\delta$ is supported on $E+\Gamma^\delta$, has integral $\mu(E)$,
and, by \eqref{eq:theta-graph-overlap},
\begin{align*}
\|F_\delta\|_2^2
&=
\iint
\frac{|\Gamma^\delta\cap((y-x)+\Gamma^\delta)|}
{|\Gamma^\delta|^2}
\,d\mu(x)\,d\mu(y)
\\
&\leq
C\iint\frac{d\mu(x)\,d\mu(y)}
{\delta+|x-y|^\theta}
\\
&\leq
C I_\theta(\mu).
\end{align*}
Cauchy--Schwarz therefore gives
\[
|E+\Gamma^\delta|
\geq
c\frac{\mu(E)^2}{I_\theta(\mu)},
\]
uniformly in $\delta$. For a decreasing sequence $\delta_j\downarrow0$, the
compact sets $E+\Gamma^{\delta_j}$ decrease to $E+\Gamma$ by
\eqref{eq:intersection-thickened-sums}. Continuity from above of Lebesgue
measure gives
\[
|E+\Gamma|
\geq
c\frac{\mu(E)^2}{I_\theta(\mu)}
>
0.
\]

Finally, if $\dim_{\mathrm H}(E)>\theta$, choose
\[
\theta<\beta<\dim_{\mathrm H}(E).
\]
Frostman's lemma gives a nonzero $\beta$-Frostman measure on $E$, and the
same annular summation used in the proof of Proposition
\ref{prop:direct-fourier} gives $I_\theta(\mu)<\infty$. The preceding argument
applies, proving the proposition.
\end{proof}

Condition \eqref{eq:curvature-lower-growth} is equivalent, up to the harmless
endpoint convention for the monotone slope, to the lower bound
\[
D^2\gamma((s,t])\geq c_0(t-s)^\theta.
\]
For $\theta=1$ it forces a nonzero absolutely continuous curvature component.
For $\theta>1$ the proposition gives the partial bound
$T(\Gamma)\leq\theta$ whenever such lower growth is available in a singular
regime. Strict convexity alone says only that
$p(t)>p(s)$ whenever $s<t$; it supplies no uniform power law of this kind.
This leads to the following sharper question.

\begin{question}\label{question:strict-convexity}
Does every strictly convex Lipschitz graph $\Gamma\subset\mathbb R^2$ satisfy
\[
T(\Gamma)=1?
\]
\end{question}

The question includes both dense atomic curvature and purely singular
continuous curvature. A satisfactory answer should distinguish geometric
transversality from the pointwise decay of one fixed arclength measure. The
examples in this paper show that the latter cannot be a necessary condition.

\end{document}